\newcommand{\R}{\mathbbm{R}}
\newcommand{\N}{\mathbbm{N}}
\newcommand{\C}{\mathcal{C}}
\def\b#1{\boldsymbol{#1}}
\def\dx{\,\mathrm dx}
\def\ds{\,\mathrm ds}
\def\div{\,\mathrm{div}\,}
\def\der{\mathrm{D}}
\def\Epsilon{\mathcal E}
\def\epsilon{\varepsilon}
\newcommand{\bfphi}{\varphi}
\newcommand{\spr}[1]{\left\langle #1 \right\rangle} 
\newcommand{\bfv}{v}
\newtheorem{theorem}{Theorem}
\newtheorem{lemma}[theorem]{Lemma}
\newtheorem{bsp}[theorem]{Example}
\newtheorem{corollary}[theorem]{Corollary}
\newtheorem{remark}[theorem]{Remark}
\newtheorem{algo}{Algorithm}
\theoremstyle{definition}
\newtheorem{definition}[theorem]{Definition}
\newcounter{AssCount}
\renewcommand{\theAssCount}{\textbf{(A{\arabic{AssCount}})}}
\newcounter{AssListCount}
\begin{document}

\title{Sharp interface limit for a phase field model in structural optimization}


\author{Luise Blank\footnotemark[1]\and Harald Garcke\footnotemark[1]\and Claudia Hecht\footnotemark[1]\and Christoph Rupprecht\footnotemark[1]}

\date{}

\maketitle

\renewcommand{\thefootnote}{\fnsymbol{footnote}}
%
\footnotetext[1]{Fakult\"at f\"ur Mathematik, Universit\"at Regensburg, 93040 Regensburg, Germany
({\tt \{Luise.Blank, Harald.Garcke, Claudia.Hecht, Christoph.Rupprecht\}@mathematik.uni-regensburg.de}).}
\renewcommand{\thefootnote}{\arabic{footnote}}

\begin{abstract}
\noindent We formulate a general shape and topology optimization problem in structural optimization by using a phase field approach. This problem is considered in view of well-posedness and we derive optimality conditions. We relate the diffuse interface problem to a perimeter penalized sharp interface shape optimization problem in the sense of $\Gamma$-convergence of the reduced objective functional. Additionally, convergence of the equations of the first variation can be shown. The limit equations can also be derived directly from the problem in the sharp interface setting. Numerical computations demonstrate that the approach can be applied for complex structural optimization problems.
\end{abstract}

\noindent \textbf{Key words. }Shape and topology optimization, linear elasticity, sensitivity analysis, optimality conditions, $\Gamma$-convergence, phase field method, diffuse interface, numerical simulations.\\

\noindent \textbf{AMS subject classification. } 35Q74, 35R35, 49Q10, 49Q12, 74B05, 74Pxx.

\pagestyle{myheadings}
\markboth{L. BLANK, H. GARCKE, C. HECHT, C. RUPPRECHT}{PHASE FIELD PROBLEMS IN STRUCTURAL OPTIMIZATION}

\section{Introduction}
In structural optimization one tries to find an optimal material configuration of two different elastic materials in some fixed container, where optimal means that a certain objective functional depending on the behaviour of the elastic materials is minimized. The control here is represented by the material distribution. Applications of shape and topology optimization reach from crashworthiness of transport vehicles and tunnel design to biomechanical applications such as bone remodelling. Structural optimization has turned out to be helpful in solving automative design problems in order to maximize the stiffness of vehicles for instance or to reduce the stresses to improve durability, see for instance \cite{bendsoe2003topology}.\\
One of the first approaches of finding the optimal material distribution in presence of two materials can be found in \cite{thomson}. However the problem of finding optimal structures in mechanical engineering dates at least back to the beginning of the 20th century when Michell \cite{michell} considered optimal truss layouts. It has turned out, that generally those problems are not well-posed, because oscillations occur on a very fine scale, see for example \cite{haberjog}, and hence several ideas have been developed to overcome this issue. One important contribution is certainly the idea of using a perimeter penalization in optimal shape design and considering this problem in the framework of Caccioppoli sets, see \cite{ambrosioButtazzo}, in order to prevent the above-mentioned oscillations. Additionally, it turns out that it is difficult to control the state variables if they are only given on varying domains of definitions. And so a so-called ersatz material approach has been introduced, see for instance \cite{allaire_jouve, bourdin_chambolle}. Here, one replaces the void regions by a fictitious material which may have a very low stiffness. We also remark that there appear many problems of practical relevance where one wants to fill a given domain with two different materials (and not one material and void) such that after an applied load an objective functional is minimized. Having this in mind it is the main goal of this paper to analyze and numerically solve problems with two materials, whether fictious or not.\\
We start by stating a perimeter penalized shape optimization problem with a general objective functional in Section~\ref{s:ElastSharpProblemForm}. This is in a simplified form given as
\begin{equation}\label{e:FirstIntroProblem}\begin{split}&\min_{(\varphi,\b u)}J_0(\varphi,\b u):=\int_\Omega h_\Omega(x,\b u)\dx+\int_{\Gamma_g}h_\Gamma(s,\b u)\ds+\gamma c_0 P_\Omega(\{\varphi=1\})\\
&\text{subject to }\int_\Omega \C(\varphi)\left(\Epsilon\left(\b u\right)-\overline\Epsilon\left(\varphi\right)\right):\Epsilon(\b v)\dx=\int_\Omega \b f\cdot\b v\dx+\int_{\Gamma_g}\b g\cdot\b v\ds\quad\forall\b v.\end{split}\end{equation}
After showing well-posedness we derive necessary optimality conditions by geometric variations without any additional regularity assumption on the minimizing set other than being a Caccioppoli set. This seems to be new as classical shape derivatives always assume at least an open Lipschitz domain as minimizer, see \cite{allairemulti, Allaire2004363, allaire2010damage, sturm}, and they do not treat a general objective functional. We also show that the obtained conditions are consistent with existing results obtained with shape derivatives if the minimizing shape inherits a certain regularity.\\
Then we approximate this problem by using a phase field approach where the free boundary is replaced by a diffuse interface with small thickness related to a parameter $\epsilon>0$. Hence, as in \cite{bourdin_chambolle}, the perimeter functional is replaced by the Ginzburg-Landau energy and the optimization problem $\eqref{e:FirstIntroProblem}$ reads as
\begin{equation}\label{e:FirstIntroProblemPhase}\begin{split}&\min_{(\varphi,\b u)}J_\epsilon(\varphi,\b u):=\int_\Omega h_\Omega(x,\b u)\dx+\int_{\Gamma_g}h_\Gamma(s,\b u)\ds+\gamma\int_\Omega\frac\epsilon2\left|\nabla\varphi\right|^2+\frac1\epsilon\psi\left(\varphi\right)\dx\\
&\text{subject to }\int_\Omega \C(\varphi)\left(\Epsilon\left(\b u\right)-\overline\Epsilon\left(\varphi\right)\right):\Epsilon(\b v)\dx=\int_\Omega \b f\cdot\b v\dx+\int_{\Gamma_g}\b g\cdot\b v\ds\quad\forall\b v.\end{split}\end{equation}

 After discussing well-posedness and necessary optimality conditions for the phase field problem we consider the sharp interface limit. To be precise, we show $\Gamma$-convergence of the reduced objective functional as the interfacial width, i.e. $\epsilon$, tends to zero.  Moreover, we show that the equations of the optimality systems converge. We hereby generalize findings from literature where this result has already been indicated in \cite{relatingphasefield} by formal asymptotics for certain objective functionals.\\
The paper is structured as follows: In Section~\ref{s:DiscProbleResults} we introduce the exact problem formulations, discuss well-posedness, optimality conditions and the sharp interface limit. The derivation of the optimality conditions can be found in Section~\ref{s:ProofOptcond} and some proofs of the sharp interface convergence results are collected in Section~\ref{s:ProofConvRes}. The numerical approach and results are given in Section \ref{s:Numerics}.

\section{Discussion of the problems and convergence results}\label{s:DiscProbleResults}

\subsection{Notation and assumptions}\label{s:NotAss}
Before formulating the shape optimization problems we give a brief introduction into the most important quantities and equations in linearized elasticity and fix some notation. We refer the reader to \cite{braess, ciarlet, eckgarcke} and references therein for details. We first assume to have in the holdall container $\Omega$ two open subsets $\Omega_1$ and $\Omega_2$ which are separated by a hypersurface $\Gamma=\partial\Omega_1\cap\partial\Omega_2$. The two subsets should correspond to two different elastic materials whose displacement fields are described by one variable $\b u:\Omega\to\R^d$. To be precise, $\b u|_{\Omega_i}$ corresponds to the displacement field of the $i$-th material where $i\in\{1,2\}$. We divide the boundary of $\Omega$ into two parts, one Dirichlet part where we can prescribe the displacement field, and a Neumann part where the applied boundary forces are acting. 

\begin{list}{\theAssCount}{\usecounter{AssCount}}\setcounter{AssCount}{\value{AssListCount}
}
\item \label{a:ElastOmega} $\Omega\subset\R^d$ is a bounded Lipschitz domain with outer unit normal $\b n$ and $d\in\{2,3\}$. Moreover, assume $\partial\Omega=\Gamma_D\cup\Gamma_g$ with $\mathcal H^{d-1}\left(\Gamma_D\right)>0$ and $\Gamma_D\cap\Gamma_g=\emptyset$.
\setcounter{AssListCount}{\value{AssCount}}
\end{list}
We remark that we denote $\R^d$-valued functions and spaces consisting of $\R^d$-valued functions in boldface.\\
For elastic materials the following equilibrium constraints hold in $\Omega_i$, $i\in\{1,2\}$:
\begin{subequations}\label{e:ElastEquilibraiumCaucyh}\begin{align}
	-\nabla\cdot\left(\der_2 W_i\left(x,\Epsilon\left(\b u\right)\right)\right)&=\b f &&\text{in }\Omega_i,\\
	\der_2W_i\left(x,\Epsilon\left(\b u\right)\right)\cdot\b n&=\b g &&\text{on }\Gamma_g\cap\partial\Omega_i,\\
	\b u&=\b u_D &&\text{on }\Gamma_D\cap\partial\Omega_i,
\end{align}\end{subequations}
where:
\begin{list}{\theAssCount}{\usecounter{AssCount}}\setcounter{AssCount}{\value{AssListCount}
}
\item \label{a:ElastForces}
$\b g\in\b L^2\left(\Gamma_g\right)$ is the given applied surface load, $\b f\in\ L^2\left(\Omega\right)$ the given applied surface load and for simplicity we assume for the following considerations $\b u_D\equiv\b 0$.
\setcounter{AssListCount}{\value{AssCount}}
\end{list}
On the interface $\Gamma:=\partial\Omega_1\cap\partial\Omega_2$ the boundary conditions are given by certain transmission properties, which follow from $\eqref{e:ElastPhaseFirstState}$. Moreover, $\Epsilon\left(\b u\right):=\frac12\left(\nabla\b u+\nabla\b u^T\right)$ is the so-called linearized strain, whereon the linear theory is based and $W_i:\Omega\times\R^{d\times d}\to\R$ denotes the elastic free energy density of the $i$-th material. We use $W_i\left(x,\Epsilon\right):=\frac12\left(\Epsilon-\overline\Epsilon_i\right):\C_i\left(\Epsilon-\overline\Epsilon_i\right)$ for $\Epsilon\in\R^{d\times d}$, which is in our case independent of $x\in\R^d$. Here, $\C_i:\R^{d\times d}\to\R^{d\times d}$ is the elasticity tensor reflecting the material properties for material $i=1$ and $i=2$, respectively. Further, $\overline\Epsilon_i\in\R^{d\times d}$ is the eigenstrain which is given as the value of the strain when the $i$-th material is unstressed. By $\der_2W_i$ we denote the derivative with respect to the second component.\\

As already mentioned above, we have two different elastic materials inside the domain $\Omega$. The design variable is a measurable function $\varphi:\Omega\to\R$, where $\{x\in\Omega\mid\varphi(x)=1\}=\Omega_1$ describes the region where the first material is present up to a set of measure zero, and $\left\{x\in\Omega\mid\varphi(x)=-1\right\}=\Omega_2$ the region which is filled with the second material. In the sharp interface setting, $\varphi$ will only take values in $\{\pm1\}$ and thus $\Omega=\Omega_1\cup\Omega_2\cup\Gamma$ with $\Gamma=\partial\Omega_1\cap\partial\Omega_2$ being the separating hypersurface. In contrast, the phase field approximation uses a design function $\varphi$ having values in $[-1,1]$. Then, $\Omega=\Omega_1\cup\Omega_2\cup I$ where $I=\{-1<\varphi<1\}$ is the diffuse interface of small thickness approximating the hypersurface $\Gamma$. Using $\varphi$ to describe the sharp as well as the diffuse interface model we describe the elasticity tensor and the eigenstrain as functions of the design variable $\varphi$ which interpolate between two different values for the two different materials. We introduce the following assumptions on the elasticity tensor $\C$ and we use the following assumptions: 

\setcounter{AssListCount}{\value{AssCount}}
\begin{list}{\theAssCount}{\usecounter{AssCount}}\setcounter{AssCount}{\value{AssListCount}
}
\item \label{a:ElastTen} Let $\C(\varphi)=\left(\C_{ijkl}(\varphi)\right)_{i,j,k,l=1}^d$ be such that $\C_{ijkl}\in C^{1,1}\left(\left[-1,1\right]\right)$ fulfills pointwise the following symmetry properties
 $\C_{ijkl}(\varphi)=\C_{jikl}(\varphi)=\C_{klij}(\varphi)$ for all $\varphi\in[-1,1],$ $i,j,k,l\in\{1,\ldots,d\}$. Moreover, we assume that there exist constants $C_C,c_C>0$ such that
\begin{align}\label{e:UniforEstimateElast}
\left|\C(\varphi) A:B\right|\leq C_C\left|A\right|\left|B\right|,\quad \C(\varphi) A:A\geq c_C\left|A\right|^2
\end{align}
holds for all symmetric matrices $A,B\in\R^{d\times d}$ and $\varphi\in\left[-1,1\right]$.
\setcounter{AssListCount}{\value{AssCount}}
\end{list}

\setcounter{AssListCount}{\value{AssCount}}
\begin{list}{\theAssCount}{\usecounter{AssCount}}\setcounter{AssCount}{\value{AssListCount}
}
\item \label{a:Eigenstrain} Let the eigenstrain $\overline\Epsilon\in C^{1,1}\left(\left[-1,1\right],\R^{d\times d}\right)$ be a function with symmetric values, i.e. $\overline\Epsilon(\varphi)^T=\overline\Epsilon(\varphi)$ for all $\varphi\in[-1,1]$.
\setcounter{AssListCount}{\value{AssCount}}
\end{list}

\begin{remark}
\begin{enumerate}
\item The estimates $\eqref{e:UniforEstimateElast}$ imply that the elasticity tensor interpolates between two finite positive definite tensors, thus in particular no ``void'', i.e. regions without material, are allowed in this formulation. Anyhow, the possibility of modelling ``void'' is given by using the so-called ersatz material approach, where a very soft material approximates the non-presence of material, cf. \cite{relatingphasefield, bourdin_chambolle}. Moreover, the elasticity tensor is not depending on the phase field variable $\epsilon>0$ introduced later on. Hence, an ersatz material approach depending on the phase field parameter $\epsilon>0$ as it is employed in \cite{relatingphasefield} cannot be used in this setting.
\item Following Vegard's law, a commonly used assumption is that the eigenstrain interpolates linearly between the two values corresponding to the two materials. Then, Assumption \ref{a:Eigenstrain} is fulfilled.
\end{enumerate}
\end{remark}

Using these assumptions, a weak formulation of the state equations on the whole of $\Omega$ can be derived, if the design variable is $\varphi\in L^1(\Omega)$ with $\left|\varphi\right|\leq 1$ a.e. in $\Omega$:\\
Find $\b u\in\b H^1_D\left(\Omega\right):=\left\{\b u\in\b H^1(\Omega)\mid\b u|_{\Gamma_D}=\b 0\right\}$ such that
\begin{align}\label{e:ElastPhaseFirstState}
\int_\Omega\C\left(\varphi\right)\left(\Epsilon\left(\b u\right)-\overline\Epsilon\left(\varphi\right)\right):\Epsilon\left(\b v\right)\dx=\int_\Omega\b f\cdot\b v\dx+\int_{\Gamma_g}\b g\cdot\b v\ds\quad\forall\b v\in\b H^1_D(\Omega).
\end{align}
In any subregion $\{\varphi=\pm1\}$ this yields exactly the weak formulation of $\eqref{e:ElastEquilibraiumCaucyh}$. The state equation is in both the phase field and the sharp interface formulation given by $\eqref{e:ElastPhaseFirstState}$, cf. Sections~\ref{s:ElastSharpProblemForm} and~\ref{s:ElastPhase}. Hence we directly state here the solvability result concerning this equation:

\begin{lemma}\label{l:SolOp}
	For every $\varphi\in L^1(\Omega)$ with $\left|\varphi\right|\leq1$ a.e. in $\Omega$ there exists a unique $\b u\in\b H^1_D(\Omega)$ such that $\eqref{e:ElastPhaseFirstState}$ is fulfilled. Moreover, the solution $\b u$ fulfills 
	\begin{align}\label{e:ElastPHaseSTateEquaAprioriestimate}\left\|\b u\right\|_{\b H^1(\Omega)}\leq C\left(\Omega,\C,\overline\Epsilon\right)\left(\left\|\b f\right\|_{\b L^2(\Omega)}+\left\|\b g\right\|_{\b L^2(\Gamma_g)}+1\right).\end{align}
	This defines a solution operator $\b S:\{\varphi\in L^1(\Omega)\mid|\varphi|\leq1\text{ a.e. in }\Omega\}\to\b H^1_D(\Omega)$.
\end{lemma}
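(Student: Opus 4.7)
The plan is a standard application of the Lax--Milgram theorem to the bilinear form
\[
a(\b u,\b v) := \int_\Omega \C(\varphi)\Epsilon(\b u):\Epsilon(\b v)\dx
\]
and the linear form
\[
F(\b v) := \int_\Omega \b f\cdot\b v\dx + \int_{\Gamma_g}\b g\cdot\b v\ds + \int_\Omega \C(\varphi)\overline\Epsilon(\varphi):\Epsilon(\b v)\dx,
\]
so that $\eqref{e:ElastPhaseFirstState}$ reads $a(\b u,\b v)=F(\b v)$ for all $\b v\in\b H^1_D(\Omega)$. First I would check that $a$ is continuous on $\b H^1_D(\Omega)\times \b H^1_D(\Omega)$, which follows directly from the upper bound in $\eqref{e:UniforEstimateElast}$ together with the obvious estimate $\|\Epsilon(\b u)\|_{\b L^2}\leq \|\b u\|_{\b H^1}$.

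The central step is coercivity. From the lower bound in $\eqref{e:UniforEstimateElast}$ I obtain $a(\b u,\b u)\geq c_C\|\Epsilon(\b u)\|_{\b L^2(\Omega)}^2$, and the crucial ingredient is then Korn's second inequality on $\b H^1_D(\Omega)$: because $\mathcal H^{d-1}(\Gamma_D)>0$ by \ref{a:ElastOmega}, there exists a constant $c_K=c_K(\Omega,\Gamma_D)>0$ with
\[
\|\Epsilon(\b u)\|_{\b L^2(\Omega)}^2 \geq c_K \|\b u\|_{\b H^1(\Omega)}^2 \qquad \forall\,\b u\in \b H^1_D(\Omega).
\]
Combining these gives coercivity with constant $c_C c_K$. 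For the continuity of $F$ I use the $L^2$ Cauchy--Schwarz inequality for the volume and boundary terms (invoking the trace theorem to bound $\|\b v\|_{\b L^2(\Gamma_g)}\leq C_{\mathrm{tr}}\|\b v\|_{\b H^1}$), while the third term is controlled by the upper bound in $\eqref{e:UniforEstimateElast}$ and the observation that, since $\overline\Epsilon\in C^{1,1}([-1,1])$ and $|\varphi|\leq 1$ a.e., the composition $\overline\Epsilon(\varphi)$ is uniformly bounded in $\b L^\infty(\Omega)$ by $M:=\max_{s\in[-1,1]}|\overline\Epsilon(s)|$, independently of $\varphi$.

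Lax--Milgram then yields a unique $\b u\in\b H^1_D(\Omega)$ solving $\eqref{e:ElastPhaseFirstState}$, and this defines the solution operator $\b S$. For the a priori estimate $\eqref{e:ElastPHaseSTateEquaAprioriestimate}$ I test $\eqref{e:ElastPhaseFirstState}$ with $\b v=\b u$, use coercivity on the left and the continuity of $F$ on the right to get
\[
c_C c_K \|\b u\|_{\b H^1(\Omega)}^2 \leq \bigl(\|\b f\|_{\b L^2(\Omega)} + C_{\mathrm{tr}}\|\b g\|_{\b L^2(\Gamma_g)} + C_C M\,|\Omega|^{1/2}\bigr)\|\b u\|_{\b H^1(\Omega)},
\]
from which the stated bound with $C=C(\Omega,\C,\overline\Epsilon)$ follows by dividing by $\|\b u\|_{\b H^1(\Omega)}$ (the estimate being trivial if this norm vanishes). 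No step is a real obstacle; the only subtlety worth pointing out is the need for Korn's inequality on $\b H^1_D(\Omega)$, which requires precisely the assumption $\mathcal H^{d-1}(\Gamma_D)>0$ from \ref{a:ElastOmega}, and the uniform bound on $\overline\Epsilon(\varphi)$, which requires $|\varphi|\leq 1$ so that the values of $\overline\Epsilon$ outside $[-1,1]$ (where it is not defined) are never touched.
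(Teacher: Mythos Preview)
Your proposal is correct and follows precisely the approach indicated in the paper: the paper's proof simply says that, making use of Korn's inequality, the result is a direct consequence of the Lax--Milgram theorem. Your write-up spells out exactly these ingredients---continuity and coercivity of the bilinear form via \ref{a:ElastTen} and Korn's inequality on $\b H^1_D(\Omega)$, boundedness of the linear functional using the trace theorem and the uniform bound on $\overline\Epsilon(\varphi)$---and derives the a~priori estimate by testing with $\b u$, so nothing needs to be added.
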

	{\em Idea of the proof}.	
	By making use of Korn's inequality, this result is a direct consequence of Lax-Milgram's theorem, cf. \cite[Lemma 24.1]{hecht}.\qquad$\square$

For our shape and topology optimization problem, the goal is to minimize
\begin{align}\label{e:ObjFctlOneReg}\mathbb H(\b u):=\int_\Omega h_\Omega\left(x,\b u\right)\dx+\int_{\Gamma_g}h_\Gamma\left(s,\b u\right)\ds\end{align}
where $h_\Omega, h_\Gamma$ fulfill

\begin{list}{\theAssCount}{\usecounter{AssCount}}\setcounter{AssCount}{\value{AssListCount}
}
 	\item\label{a:ElastObjectiveFctl}
  $h_\Omega:\Omega\times\R^d\to\R$ and $h_\Gamma:\Gamma_g\times\R^d\to\R$ are Carath\'eodory functions, i.e.
 	\begin{enumerate}
 	\item $h_\Omega(\cdot,\b v):\Omega\to\R$ and $h_\Gamma(\cdot,\b v):\Gamma_g\to\R$ are measurable for each $\b v\in\R^d$, and
 	\item $h_\Omega(x,\cdot), h_\Gamma(s,\cdot):\R^d\to\R$ are continuous for almost every $x\in\Omega$ and $s\in\Gamma_g$, respectively.
 	\end{enumerate}
 	Moreover, assume that there exist functions $a_1\in L^1(\Omega)$, $a_2\in L^1(\Gamma_g)$ and $b_1\in L^\infty(\Omega)$, $b_2\in L^\infty(\Gamma_g)$ such that it holds
	\begin{align}\label{e:ElastPhaseObjFctGrowF}|h_\Omega(x,\b v)|\leq a_1(x)+b_1(x)|\b v|^2\quad\forall\b v\in\R^d,\text{ a.e. }x\in\Omega,\end{align}
	and
	\begin{align}\label{e:ElastPhaseObjFctGrowG}|h_\Gamma(s,\b v)|\leq a_2(s)+b_2(s)|\b v|^2\quad\forall \b v\in\R^d,\text{ a.e. }s\in\Gamma_g.\end{align}
	Additionally, we assume that the set
	\begin{equation*}\begin{split}\left\{\int_\Omega h_\Omega\left(x,\b S(\varphi)(x)\right)\dx+\int_{\Gamma_g}h_\Gamma\left(s,\b S(\varphi)(s)\right)\ds\mid\varphi\in L^1(\Omega), |\varphi|\leq 1\text{ a.e. in }\Omega\right\}\end{split}\end{equation*}
is bounded from below.
\setcounter{AssListCount}{\value{AssCount}}
\end{list}

\begin{remark}\label{r:ContObjFctlElast}
Due to \cite{showalter}, the Nemytskii operators
$$L^2(\Omega)^d\ni\b v\mapsto h_\Omega\left(\cdot,\b v\right)\in L^1(\Omega),\quad L^2(\Gamma_g)^d\ni\b v\mapsto h_\Gamma\left(\cdot,\b v\right)\in L^1(\Gamma_g)$$
are well-defined if and only if $\eqref{e:ElastPhaseObjFctGrowF}$ and $\eqref{e:ElastPhaseObjFctGrowG}$ are fulfilled and in this case the operators are continuous.
\end{remark}

Assumptions \ref{a:ElastOmega}-\ref{a:ElastObjectiveFctl} are the basic assumptions for the following considerations. To derive also first order optimality conditions we have to impose at some points additionally the following differentiability assumption. 

\begin{list}{\theAssCount}{\usecounter{AssCount}}\setcounter{AssCount}{\value{AssListCount}
}

 	\item\label{a:ElastDiffAss} For every fixed $\b v\in\R^d$ we have $h_\Omega(\cdot,\b v)\in W^{1,1}(\Omega)$ and $h_\Gamma\left(\cdot,\b v\right)\in W^{1,1}(\Gamma_g)$. Let the partial derivatives $\der_2h_\Omega\left(x,\cdot\right), \der_2h_\Gamma\left(s,\cdot\right)$ exist for almost every $x\in\Omega$ and $s\in\Gamma_g$, respectively. Moreover there exist $\hat a_1\in L^2(\Omega)$, $\hat a_2\in L^2(\Gamma_g)$ and $\hat b_1\in L^\infty(\Omega)$, $\hat b_2\in L^\infty(\Gamma_g)$ such that
 		\begin{align}\label{e:ElastPhaseObjFctGrowFDer}|\der_{2}h_\Omega\left(x,\b v\right)|\leq \hat a_1(x)+\hat b_1(x)|\b v|\quad\forall\b v\in\R^d, \text{ a.e. }x\in\Omega\end{align}
 		and
 		\begin{align}|\der_{2}h_\Gamma\left(s,\b v\right)|\leq \hat a_2(s)+\hat b_2(s)|\b v|\quad\forall\b v\in\R^d,\text{ a.e. }s\in\Gamma_g.\end{align}
		\setcounter{AssListCount}{\value{AssCount}}
\end{list}


\begin{remark}\label{r:ElastObjFctFrechet}
	Under the Assumption \ref{a:ElastDiffAss} the operators 	
	$$F:\b L^2\left(\Omega\right)\ni\b u\mapsto\int_\Omega h_\Omega\left(x,\b u(x)\right)\dx,\quad G:\b L^2(\Gamma_g)\ni\b u\mapsto\int_{\Gamma_g} h_\Gamma\left(s,\b u(s)\right)\ds$$
 	are continuously Fr\'{e}chet differentiable and that the directional derivatives are given by $\der F\left(\b u\right)\left(\b v\right)=\int_\Omega\der_2 h_\Omega\left(x,\b u\right)\b v\dx$, $\der G\left(\b u\right)\left(\b v\right)=\int_{\Gamma_g}\der_2h_\Gamma\left(s,\b u\right)\b v\ds$.
\end{remark}

In the next remark, we outline how we could replace Assumptions \ref{a:ElastObjectiveFctl} and \ref{a:ElastDiffAss} by weaker assumptions. In order to simplify the estimates in the following analysis we prefer \ref{a:ElastObjectiveFctl} and \ref{a:ElastDiffAss}.
\begin{remark}
	We could generalise the results to objective functionals satisfying
	\begin{align}\label{e:ElastPhaseObjFctGrowFAlt}\left|h_\Omega\left(x,\b v\right)\right|\leq a_1(x)+b_1(x)\left|\b v\right|^p,\quad\forall\b v\in\R^d, \text{ a.e. }x\in\Omega\end{align}
	for some functions $a_1\in L^1(\Omega)$ and $b_1\in L^\infty(\Omega)$, instead of requiring $\eqref{e:ElastPhaseObjFctGrowF}$. Here, $p\geq2$ has to be chosen such that $\b H^1(\Omega)\hookrightarrow \b L^p(\Omega)$ is a compact imbedding, hence $2\leq p<\infty$ for $d=2$ and $2\leq p<6$ for $d=3$. We then obtain that $L^p(\Omega)^d\ni\b v\mapsto h_\Omega\left(\cdot,\b v\right)\in L^1(\Omega)$ is well-defined and continuous and all proofs can be adapted. In this case, we have to replace $\eqref{e:ElastPhaseObjFctGrowFDer}$ in Assumption \ref{a:ElastDiffAss} by
	$
	|\der_{2}h_\Omega\left(x,\b v\right)|\leq \hat a_1(x)+\hat b_1(x)|\b v|^{p-1}$ for all $\b v\in\R^d$ and a.e. $x\in\Omega$ where $\hat a_1\in L^{\nicefrac{p}{p-1}}(\Omega)$, $\hat b_1\in L^\infty(\Omega)$ to obtain that $\b L^p\left(\Omega\right)\ni\b u\mapsto\int_\Omega h_\Omega\left(x,\b u(x)\right)\dx$
	is continuously Fr\'{e}chet differentiable.  The same holds for the choice of $h_\Gamma$.\\
\end{remark}

In order to obtain a well-posed problem we add to the cost functional $\mathbb H$ in $\eqref{e:ObjFctlOneReg}$ a regularization term. In the sharp interface problem a multiple of the perimeter of the free boundary between the two materials is used. The exact definition of the perimeter is introduced now. Since we describe the sharp interface model by a design variable $\varphi:\Omega\to\left\{\pm1\right\}$, where $\left\{\varphi=\pm1\right\}$ describe the two different materials, this design variable is going to be a function of bounded variation. We give here a brief introduction in the notation of Caccioppoli sets and functions of bounded variations, but for a detailed introduction we refer to \cite{ambrosio,evans_gariepy}. We call a function $\varphi\in L^1(\Omega)$ a function of bounded variation if its distributional derivative is a vector-valued finite Radon measure. The space of functions of bounded variation in $\Omega$ is denoted by $BV(\Omega)$, and by $BV(\Omega,\{\pm1\})$ we denote functions in $BV(\Omega)$ having only the values $\pm1$ a.e. in $\Omega$. We then call a measurable set $E\subset\Omega$ Caccioppoli set if $\chi_E\in BV(\Omega)$. For any Caccioppoli set $E$, one can hence define the total variation $\left|\der\chi_E\right|(\Omega)$ of $\der\chi_E$, as $\der\chi_E$ is a finite measure. This value is then called the perimeter of $E$ in $\Omega$ and is denoted by $P_\Omega\left(E\right):=\left|\der\chi_E\right|(\Omega)$.\\

We include additionally a volume constraint in the optimization problem. By assuming that the design variable $\varphi$ fulfills $\int_\Omega\varphi\dx\leq\beta|\Omega|$, which is equivalent to $|\{\varphi=1\}|\leq\tfrac{(\beta+1)}{2}|\Omega|$ and $|\{\varphi=-1\}|\geq\tfrac{(1-\beta)}{2}|\Omega|$, for some fixed constant $\beta\in(-1,1)$ we prescribe a maximal amount of the material corresponding to $\{\varphi=1\}$ (and thus a minimal amount of $\{\varphi=-1\}$) that can be used during the optimization process.\\
Our admissible design variables for the sharp interface problem hence are chosen in the set
\begin{align}\Phi_{ad}^0:=\left\{\varphi\in BV\left(\Omega,\left\{\pm1\right\}\right)\mid\int_\Omega\varphi\dx\leq\beta|\Omega|\right\}.\end{align}
In the phase field formulation of the shape optimization problem we approximate the perimeter by the Ginzburg-Landau energy  
\begin{align}\label{e:GinzburgLandau}E_\epsilon(\varphi):=\int_\Omega\frac\epsilon2|\nabla\varphi|^2+\frac1\epsilon\psi(\varphi)\dx\end{align}
with a double obstacle potential $\psi:\R\to\overline\R:=\R\cup\{\infty\}$ given by
\begin{align}\label{e:Potential}\psi\left(\varphi\right):=\begin{cases}\psi_0\left(\varphi\right),&\text{if }\left|\varphi\right|\leq1\\+\infty,&\text{if }\left|\varphi\right|>1\end{cases},\qquad \psi_0\left(\varphi\right):=\frac12\left(1-\varphi^2\right).\end{align}

\noindent The functionals $(E_\epsilon)_{\epsilon>0}$ $\Gamma$-converge in $L^1(\Omega)$ to $\varphi\mapsto c_0 P_\Omega\left(\{\varphi=1\}\right)$ with $c_0:=\int_{-1}^1\sqrt{2\psi(s)}\ds=\frac\pi2$ as $\epsilon\searrow0$, see for instance \cite{modica,modica_mortola}.\\

In the phase field setting, the design variable $\varphi$ is allowed to have values in $[-1,1]$ and thus there may be a transition area between the areas $\{\varphi=-1\}$ and $\{\varphi=1\}$. The admissible set in the phase field setting is given by

\begin{align}\Phi_{ad}:=\left\{\varphi\in H^1(\Omega)\mid\int_\Omega\varphi\dx\leq\beta|\Omega|,\,\left|\varphi\right|\leq1\text{ a.e. in }\Omega\right\}\end{align}
and the extended admissible set by $\overline\Phi_{ad}:=\left\{\varphi\in H^1(\Omega)\mid\left|\varphi\right|\leq1\text{ a.e. in }\Omega\right\}.$

\begin{remark}\label{r:EqualityConstraint}
Instead of $\int_\Omega\varphi\dx\leq\beta|\Omega|$ we could also use an equality constraint of the form $\int_\Omega\varphi\dx=\beta|\Omega|$. This prescribes then the exact volume fraction of each material in advance. In this setting, the same analysis can be carried out. The results whereon our sharp interface analysis is based only deal with an equality constraint, see for instance \cite{blowey_elliot, garcke_paper, modica}.
\end{remark}

	As we derive first order optimality conditions by varying the free boundary between the two materials with transformations, we introduce here the admissible transformations and its corresponding velocity fields:
	
	\newcounter{VCount}
\renewcommand{\theVCount}{\textbf{(V{\arabic{VCount}})}}
\newcounter{VListCount}   
	\begin{definition}[$\mathcal V_{ad}$, $\mathcal T_{ad}$] \label{d:AdmissibleTransVel}
		 The space $\mathcal V_{ad}$ of admissible velocity fields is defined as the set of  all $V\in C\left(\left[-\tau,\tau\right]\times\overline\Omega,\R^d\right)$, where $\tau>0$ is some fixed, small constant, such that it holds:
		\begin{list}{\theVCount}{\usecounter{VCount}}
 \item\label{l:AssVSmooth}
 	\begin{description}\item[\textbf{(V1a)}]
		$V(t,\cdot)\in C^2\left(\overline\Omega,\R^d\right)$, \item[\hspace{0.0cm}\textbf{(V1b)}]$\exists C>0$: $\left\|V\left(\cdot,y\right)-V\left(\cdot,x\right)\right\|_{C\left(\left[-\tau,\tau\right],\R^d\right)}\leq C\left|x-y\right|\,\,\forall x,y\in\overline\Omega$,
		\end{description}
		\item\label{l:AssVNormal}
		$V(t,x)\cdot\b n(x)=0$ for all $x\in\partial\Omega$,
		\item\label{l:AssVBoundaryG} $V(t,x)=\b0$ for every $x\in\Gamma_D$.
		\setcounter{VCount}{\value{VCount}}
		\setcounter{VListCount}{\value{VCount}}
		\end{list}
		Then the space $\mathcal T_{ad}$ of admissible transformations is defined as solutions of the ordinary differential equation
		\begin{equation}\label{e:ODE}\partial_t T_t(x)=V(t,T_t(x)),\qquad T_0(x)=x\end{equation}
		with $V\in \mathcal V_{ad}$,
		which gives some $T:\left(-\tilde\tau,\tilde\tau\right)\times\overline\Omega\to\overline\Omega$, with $0<\tilde\tau$ small enough.\\
		We often use the notation $V(t)=V(t,\cdot)$.
		\end{definition}
		
			\begin{remark}\label{r:NotAssPropertiesOfTransformations}
		Let $V\in{\mathcal V}_{ad}$ and $T\in{\mathcal V}_{ad}$ be the transformation associated to $V$ by $\eqref{e:ODE}$. Then $T_t:\overline\Omega\to\overline\Omega$ is bijective and $T(\cdot, x)\in C^1((-\tau,\tau),\R^d)$ for all $x\in\overline\Omega$ and $\tau>0$ small enough. These and other properties are discussed in detail in \cite{delfour, deflourpaper}.
	\end{remark}

We finish this introduction by two typical examples which are commonly used as objective functionals in structural optimization. For a deeper discussion on those problems and some further applications we refer for instance to \cite{bendsoe2003topology}.

\begin{bsp}[Mean compliance]\label{ex:MeanComp}
One commonly used objective in structural optimization is the minimization of the mean compliance, which is for a structure in its equilibrium configuration given by
$\int_\Omega \b f\cdot\b u\dx+\int_{\Gamma_g}\b g\cdot\b u\ds.$ The aim of minimizing this objective functional can be interpreted as maximizing the stiffness under the given forces or as minimizing the stored mechanical energy. We notice, that this is equivalent to minimizing 
$\int_\Omega\C\left(\varphi\right)\left(\Epsilon\left(\b u\right)-\overline\Epsilon\left(\varphi\right)\right):\Epsilon\left(\b u\right)\dx$
if $\b u$ solves the state equations $\eqref{e:ElastPhaseFirstState}$. 
\end{bsp}

\begin{bsp}[Compliant mechanism]\label{ex:CompMech}
The typical compliant mechanism objective functional used in topology optimization is given by the tracking type functional
$\frac12\int_\Omega c\left|\b u-\b u_\Omega\right|^2\dx$
where $\b u_\Omega\in\b L^2(\Omega)$ is some desired displacement, and $c\in L^\infty(\Omega)$, $c\geq 0$, is a weighting factor.
\end{bsp}

In the following, by minimizers we always mean global minimizers.

\subsection{Perimeter penalized shape optimization problem}\label{s:ElastSharpProblemForm}

The sharp interface problem that we consider in this section is given by

\begin{equation}\begin{split}\label{e:ElastObjFctSharp}
\min_{(\varphi,\b u)} J_0\left(\varphi,\b u\right)&:=\mathbb H(\b u)+\gamma c_0P_\Omega\left(\left\{\varphi=1\right\}\right)=\\
&=\int_\Omega h_\Omega\left(x,\b u\right)\dx+\int_{\Gamma_g}h_\Gamma\left(s,\b u\right)\ds+\gamma c_0P_\Omega\left(\left\{\varphi=1\right\}\right)\end{split}
\end{equation}
with
$\left(\varphi,\b u\right)\in\Phi_{ad}^0\times\b H^1_D(\Omega)$ such that $\eqref{e:ElastPhaseFirstState}$ holds, i.e.
\begin{align}\label{e:ElastSharpStateWeak}\int_\Omega\C\left(\varphi\right)\left(\Epsilon\left(\b u\right)-\overline\Epsilon\left(\varphi\right)\right):\Epsilon\left(\b v\right)\dx=\int_\Omega\b f\cdot\b v\dx+\int_{\Gamma_g}\b g\cdot\b v\ds\quad\forall\b v\in\b H^1_D(\Omega).\end{align}

This is a topology and shape optimization problem, where $\varphi\in\Phi_{ad}^0=\{\varphi\in BV\left(\Omega,\{\pm1\}\right)\mid\int_\Omega\varphi\dx\leq\beta|\Omega|\}$ plays the role of the design variable, and can only have the discrete values $\pm1$. The perimeter in the cost functional ensures the existence of a minimizer where the weighting factor $\gamma>0$ can be arbitrary.
In the remainder of this subsection we summarize often results where the proofs are given later in this paper or in some previous work. Studying the reduced objective functional $j_0:L^1(\Omega)\to\overline\R$, 
$$j_0(\varphi):=\begin{cases}J_0\left(\varphi,\b S\left(\varphi\right)\right), &\text{if }\varphi\in\Phi_{ad}^0,\\ +\infty,&\text{otherwise,}\end{cases}$$ 
we obtain by using the direct method in the calculus of variations the well-posedness of the optimization problem:

\begin{theorem}\label{t:SharpExistMin} Under the assumptions \ref{a:ElastOmega}-\ref{a:ElastObjectiveFctl}, there exists at least one minimizer of $\eqref{e:ElastObjFctSharp}-\eqref{e:ElastSharpStateWeak}$.
\end{theorem}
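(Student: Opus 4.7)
The plan is to apply the direct method in the calculus of variations to the reduced functional $j_0$.

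First I would take a minimizing sequence $(\varphi_n)_n \subset \Phi_{ad}^0$ so that $j_0(\varphi_n) \to \inf j_0$, where by Assumption \ref{a:ElastObjectiveFctl} the infimum is finite. Since $|\varphi_n|\leq 1$ a.e.\ on the bounded set $\Omega$, we have $\|\varphi_n\|_{L^1(\Omega)}\leq|\Omega|$, and the perimeter term $\gamma c_0 P_\Omega(\{\varphi_n=1\}) = \tfrac12\gamma c_0\,|D\varphi_n|(\Omega)$ is controlled by $j_0(\varphi_n) - \mathbb{H}(\b S(\varphi_n))$, which is bounded using the lower bound in \ref{a:ElastObjectiveFctl}. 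Hence $(\varphi_n)_n$ is bounded in $BV(\Omega)$, and by the usual BV-compactness we may extract a subsequence (not relabelled) with $\varphi_n\to\varphi^*$ in $L^1(\Omega)$ and a.e. Passing to a pointwise a.e.\ limit preserves the $\{\pm1\}$-valuedness, so $\varphi^*\in BV(\Omega,\{\pm1\})$, and the volume constraint survives the $L^1$-limit, so $\varphi^*\in\Phi_{ad}^0$.

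Next I would pass to the limit in the state equation to identify $\b S(\varphi^*)$. The a priori bound \eqref{e:ElastPHaseSTateEquaAprioriestimate} from Lemma~\ref{l:SolOp} shows that $\b u_n := \b S(\varphi_n)$ is bounded in $\b H^1_D(\Omega)$, so on a further subsequence $\b u_n\rightharpoonup\b u^*$ weakly in $\b H^1_D(\Omega)$ and strongly in $\b L^2(\Omega)$ as well as in $\b L^2(\Gamma_g)$ by compact embedding and compactness of the trace operator $H^1(\Omega)\to L^2(\partial\Omega)$. Using \ref{a:ElastTen} and \ref{a:Eigenstrain}, the continuity of $\C$ and $\overline{\Epsilon}$ on $[-1,1]$ together with the pointwise a.e.\ convergence $\varphi_n\to\varphi^*$ and the uniform bounds yield $\C(\varphi_n)\to\C(\varphi^*)$ and $\overline{\Epsilon}(\varphi_n)\to\overline{\Epsilon}(\varphi^*)$ strongly in every $L^p(\Omega)$, $p<\infty$, by dominated convergence. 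Combining weak convergence of $\Epsilon(\b u_n)$ in $\b L^2$ with strong convergence of $\C(\varphi_n)$ and $\overline{\Epsilon}(\varphi_n)$ permits passing to the limit in \eqref{e:ElastSharpStateWeak} for every fixed $\b v\in\b H^1_D(\Omega)$, yielding $\b u^* = \b S(\varphi^*)$.

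Finally I would establish $\liminf_n j_0(\varphi_n)\geq j_0(\varphi^*)$. The perimeter is $L^1$-lower semicontinuous, so $P_\Omega(\{\varphi^*=1\})\leq \liminf_n P_\Omega(\{\varphi_n=1\})$. The strong convergence $\b u_n\to\b u^*$ in $\b L^2(\Omega)$ and $\b L^2(\Gamma_g)$ combined with the continuity of the Nemytskii operators from Remark~\ref{r:ContObjFctlElast} (valid under \ref{a:ElastObjectiveFctl}) gives $\mathbb{H}(\b u_n)\to \mathbb{H}(\b u^*)$. Summing these contributions yields $j_0(\varphi^*)\leq \liminf_n j_0(\varphi_n) = \inf j_0$, so $\varphi^*$ is a minimizer.

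The main obstacle is the passage to the limit in the bilinear form of the state equation: both coefficients $\C(\varphi_n)$ and $\overline{\Epsilon}(\varphi_n)$ depend on the design variable that only converges in $L^1$, while $\Epsilon(\b u_n)$ converges only weakly in $\b L^2$. The key step is to upgrade the $L^1$-convergence of $\varphi_n$ to strong convergence of $\C(\varphi_n)$ and $\overline{\Epsilon}(\varphi_n)$ in sufficiently many $L^p$-spaces via the a.e.\ convergence of a subsequence and dominated convergence, so that the weak-strong pairing identifies the limiting equation.
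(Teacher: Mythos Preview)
Your proof is correct and follows essentially the same route as the paper: direct method, BV-compactness for the minimizing sequence, lower semicontinuity of the perimeter, and continuity of $\varphi\mapsto\mathbb H(\b S(\varphi))$ along $L^1$-convergent sequences. The only structural difference is that the paper outsources the latter continuity statement to a separate lemma (Lemma~\ref{l:ElastConvGammaConvFEcont}), whose proof is exactly the weak-strong argument you spell out inline---a priori bound on $\b u_n$, weak $\b H^1$-compactness, dominated convergence for $\C(\varphi_n)$ and $\overline\Epsilon(\varphi_n)$, weak--strong pairing to identify the limit state, and then the Nemytskii continuity from Remark~\ref{r:ContObjFctlElast}.
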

\begin{proof}
	We use that $\eqref{e:ElastObjFctSharp}-\eqref{e:ElastSharpStateWeak}$ is equivalent to $\min_{\varphi\in L^1(\Omega)} j_0(\varphi)$. According to Assumption \ref{a:ElastObjectiveFctl}, the objective functional $j_0$ is bounded from below and hence we may choose a minimizing sequence $(\varphi_k)_{k\in\N}$ for $j_0$. Thus $P_\Omega(\{\varphi_k=1\})=|\der\varphi_k|(\Omega)$ is uniformly bounded. We obtain therefrom and the fact that $\|\varphi_k\|_{L^\infty(\Omega)}\leq 1$ for all $k\in\N$ that $(\varphi_k)_{k\in\N}$ is uniformly bounded in $BV(\Omega)$. As $BV(\Omega)$ imbeds compactly into $L^1(\Omega)$, we hence find that $(\varphi_k)_{k\in\N}$ has a subsequence $(\varphi_{k_l})_{l\in\N}$ converging in $L^1(\Omega)$ and pointwise to some limit element $\varphi \in BV(\Omega)$. From the pointwise convergence we obtain $\varphi(x)\in\{\pm1\}$ for almost every $x\in\Omega$ and the convergence in $L^1(\Omega)$ yields directly $\int_\Omega\varphi\dx\leq\beta|\Omega|$. Hence we have $\varphi\in \Phi_{ad}^0$. From Lemma~\ref{l:ElastConvGammaConvFEcont}, which is given in Section~\ref{s:ProofConvRes}, we obtain that $\varphi\mapsto\int_\Omega h_\Omega(x,\b S(\varphi))\dx+\int_{\Gamma_g}h_\Gamma(s,\b S(\varphi))\ds$ is continuous in $L^1(\Omega)$. Moreover, the perimeter functional $\varphi\mapsto P_\Omega(\{\varphi=1\})$ is lower semicontinuous in $L^1(\Omega)$, see \cite{ambrosio}, and thus we obtain $j_0(\varphi)\leq\liminf_{l\to\infty} j_0(\varphi_{k_l})$. This shows that $\varphi$ is a minimizer of $j_0$, and hence $(\varphi,\b S(\varphi))$ is a minimizer of $\eqref{e:ElastObjFctSharp}-\eqref{e:ElastSharpStateWeak}$.	
\end{proof}

Our next aim is to deduce first order necessary optimality conditions. For this purpose, we use the ideas of shape calculus, which means we apply geometric variations. As already mentioned in the introduction we do not impose any additional regularity assumption on the minimizing set. We obtain:

\begin{theorem}\label{t:ElastGeomVarSharp} Assume \ref{a:ElastOmega}-\ref{a:ElastDiffAss}. Then for any minimizer $\left(\varphi_0,\b u_0\right)\in \Phi_{ad}^0\times\b H^1_D(\Omega)$ of $\eqref{e:ElastObjFctSharp}-\eqref{e:ElastSharpStateWeak}$ the following necessary optimality conditions hold: There exists a Lagrange multiplier $\lambda_0\geq0$ for the integral constraint such that
	\begin{align}\label{e:ElastSharpOptCond1}\partial_t|_{t=0}j_0\left(\varphi_0\circ T_t^{-1}\right)=-\lambda_0\int_\Omega\varphi_0\div V(0)\dx,\quad\lambda_0\left(\int_\Omega\varphi_0\dx-\beta\left|\Omega\right|\right)=0
	\end{align}
	holds for all $T\in{\mathcal T}_{ad}$ with corresponding velocity $V\in{\mathcal V}_{ad}$, where the derivative is given by the following formula:
		\begin{equation}\label{e:ElastSharpPartTJ0E}\begin{split}
		\partial_t|_{t=0}j_0\left(\varphi_0\circ T_t^{-1}\right)&=\partial_t|_{t=0}\mathbb H\left(\b S\left(\varphi_0\circ T_t^{-1}\right)\right)+\gamma c_0\int_\Omega\left(\div V(0)-\nu\cdot\nabla V(0)\nu\right)\,\mathrm d\left|\der\chi_{E_0}\right|
	\end{split}\end{equation}
and
	
	\begin{equation}\label{e:ElastSharpPartTJ0E2}\begin{split}
		&\partial_t|_{t=0}\mathbb H\left(\b S\left(\varphi_0\circ T_t^{-1}\right)\right)=\int_\Omega\left[\der h_\Omega\left(x,\b u_0\right)\left(V(0),\dot{\b u}_0\left[V\right]\right)+h_\Omega\left(x,\b u_0\right)\div V(0)\right]\dx+\\
		&+\int_{\Gamma_g}\left[\der h_\Gamma\left(s,\b u_0\right)\left(V(0),\dot{\b u}_0\left[V\right]\right)+h_\Gamma\left(s,\b u_0\right)\left(\div V(0)-\b n\cdot\nabla V(0)\b n\right)\right]\ds
	\end{split}\end{equation}
	with $\nu:=\frac{\der\chi_{E_0}}{|\der\chi_{E_0}|}$ being the generalised unit normal on the Caccioppoli set $E_0:=\{\varphi_0=1\}$, compare \cite{ambrosio}. Moreover, $\dot{\b u}_0\left[V\right]\in\b H^1_D(\Omega)$ is given as the solution of

\begin{equation}\label{e:ElastPhaseDotUEpsilonEquationWeak}\begin{split}&\int_\Omega\C(\varphi_0)\Epsilon(\dot{\b u}_0[V]):\Epsilon(\b v)\dx=\int_\Omega\C(\varphi_0)\frac12\left(\nabla V(0)\nabla\b u_0+\left(\nabla V(0)\nabla\b u_0\right)^T\right):\Epsilon(\b v)+\\
&+\C(\varphi_0)\left(\Epsilon(\b u_0)-\overline\Epsilon\left(\varphi_0\right)\right):\frac12\left(\nabla V(0)\nabla\b v+\left(\nabla V(0)\nabla\b v\right)^T\right)-\\
&-\C(\varphi_0)\left(\Epsilon(\b u_0)-\overline\Epsilon\left(\varphi_0\right)\right):\Epsilon(\b v)\div V(0)\dx-\int_\Omega\b f\cdot\der \b vV(0)\dx-\int_{\Gamma_g}\b g\cdot\der\b vV(0)\ds\end{split}\end{equation}

which has to hold for all $\b v\in\b H^1_D(\Omega)$.
\end{theorem}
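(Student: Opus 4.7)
The plan is to apply a Lagrange multiplier rule to the one-parameter family $\varphi_t := \varphi_0 \circ T_t^{-1}$, $t \in (-\tau,\tau)$, after showing that $t \mapsto j_0(\varphi_t)$ is differentiable at $t=0$ and identifying the derivative term-by-term. Since $T_t \colon \overline\Omega \to \overline\Omega$ is a $C^2$-diffeomorphism for $|t|$ small (Remark~\ref{r:NotAssPropertiesOfTransformations}) and $V$ is tangential on $\partial\Omega$ (V2), we have $\varphi_t \in BV(\Omega,\{\pm1\})$ with $\{\varphi_t=1\} = T_t(E_0)$, $E_0:=\{\varphi_0=1\}$, and $T_t(\Gamma_g)\subset\Gamma_g$, $T_t|_{\Gamma_D}=\mathrm{id}$. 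A direct change of variables gives $\partial_t|_{t=0}\int_\Omega\varphi_t\dx = \int_\Omega\varphi_0\div V(0)\dx$, so the volume constraint varies smoothly with $t$.

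For the three contributions to $j_0(\varphi_t)$ the argument runs as follows. The shape derivative of the perimeter is a classical statement for Caccioppoli sets under $C^2$-transformations (see~\cite{ambrosio}) and gives $\partial_t|_{t=0}P_\Omega(\{\varphi_t=1\}) = \int_\Omega(\div V(0) - \nu\cdot\nabla V(0)\nu)\,\mathrm d|\der\chi_{E_0}|$. For the state-dependent part the key step is to pull the state equation back to the fixed reference configuration. Setting $\hat{\b u}_t := \b S(\varphi_t)\circ T_t \in \b H^1_D(\Omega)$, substituting into~$\eqref{e:ElastPhaseFirstState}$ and changing variables via $x=T_t(y)$ produces a variational problem on the fixed domain $\Omega$ whose coefficients are $C^1$ in $t$ and, for $|t|$ small, uniformly bounded and coercive thanks to~\ref{a:ElastTen}. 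Lemma~\ref{l:SolOp} together with the implicit function theorem then show that $t\mapsto\hat{\b u}_t$ is continuously differentiable near $t=0$; differentiating the transformed weak form at $t=0$ and reading off the resulting identity yields exactly~$\eqref{e:ElastPhaseDotUEpsilonEquationWeak}$ for the material derivative $\dot{\b u}_0[V]:=\partial_t|_{t=0}\hat{\b u}_t$. Writing
$$\mathbb H(\b S(\varphi_t)) = \int_\Omega h_\Omega(T_t(y),\hat{\b u}_t(y))\det DT_t(y)\dy + \int_{\Gamma_g}h_\Gamma(T_t(s),\hat{\b u}_t(s))\omega_t(s)\ds,$$
with $\omega_t$ the tangential Jacobian of $T_t|_{\Gamma_g}$, and differentiating at $t=0$ using Remark~\ref{r:ElastObjFctFrechet} together with the standard identities $\partial_t|_{t=0}\det DT_t = \div V(0)$ and $\partial_t|_{t=0}\omega_t = \div V(0)-\b n\cdot\nabla V(0)\b n$, produces~$\eqref{e:ElastSharpPartTJ0E2}$; combining all three contributions gives~$\eqref{e:ElastSharpPartTJ0E}$.

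The Lagrange multiplier is then extracted by the usual case distinction. If $\int_\Omega\varphi_0\dx<\beta|\Omega|$, both $\pm V$ are admissible directions for all $V\in\mathcal V_{ad}$, so minimality forces $\partial_t|_{t=0}j_0(\varphi_t)=0$ and one takes $\lambda_0=0$. If the constraint is active, one fixes an auxiliary $W\in\mathcal V_{ad}$ with $\int_\Omega\varphi_0\div W(0)\dx<0$ (constructed via a smooth cut-off localised near a point of positive density of both $E_0$ and $\Omega\setminus E_0$) and corrects an arbitrary $V$ by a small multiple of $W$ to keep $\varphi_t$ admissible; the classical KKT argument in Banach spaces then delivers $\lambda_0\geq 0$ together with the complementarity condition. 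The principal technical obstacle is the lack of regularity of $E_0$ beyond being a Caccioppoli set, which blocks any direct application of classical shape calculus on the free boundary. This is overcome by systematically transporting all $t$-dependence onto the smooth map $T_t$ through the pull-back $\hat{\b u}_t$, reducing the differentiability question to a smoothly parametrised variational problem on the fixed domain $\Omega$ to which standard tools apply.
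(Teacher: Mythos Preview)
Your proposal is correct and follows essentially the same route as the paper: the perimeter variation is cited as a known formula for Caccioppoli sets, the material derivative $\dot{\b u}_0[V]$ is obtained by pulling the state equation back to the reference configuration and applying the implicit function theorem (this is exactly the content of Lemma~\ref{l:DerOptCondLem}), the objective is differentiated after a change of variables using the identities $\partial_t|_{t=0}\det\der T_t=\div V(0)$ and $\partial_t|_{t=0}\omega_t=\div V(0)-\b n\cdot\nabla V(0)\b n$, and the Lagrange multiplier is produced by correcting an arbitrary $V$ with an auxiliary direction $W$ satisfying $\int_\Omega\varphi_0\div W(0)\dx<0$.

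The only noticeable difference is in the execution of the Lagrange multiplier step. You invoke an abstract KKT\,/\,Farkas-type argument after linearly correcting $V$ by a multiple of $W$; the paper instead builds a two-parameter family $\varphi_0\circ T_t^{-1}\circ S_s^{-1}$, applies the implicit function theorem to the map $(t,s)\mapsto\int_\Omega\varphi_0\circ T_t^{-1}\circ S_s^{-1}\dx$ to obtain a curve $s(t)$ along which the constraint is exactly satisfied, and then reads off $\lambda_0:=\partial_s|_{s=0}j_0(\varphi_0\circ S_s^{-1})$ directly. Both arguments rely on the same ingredient (existence of $W$ with $\int_\Omega\varphi_0\div W(0)\dx\neq 0$, which follows from $|D\varphi_0|(\Omega)>0$ when the constraint is active with $\beta\in(-1,1)$), and both exploit that $V\mapsto\partial_t|_{t=0}j_0(\varphi_0\circ T_t^{-1})$ is linear; the paper's version is just more explicit and self-contained.
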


The proof of this theorem can be found in Section \ref{s:ProofOptcond}.

\begin{remark}\label{r:ElastSharpGammgRegualirSurfDiv}
	If we assume that $\Gamma_g$ has $C^2$-regularity, $\eqref{e:ElastSharpPartTJ0E2}$ can be rewritten into a more convenient form by using the identity $\div_{\Gamma_g}V(0)=\div V(0)-\b n\cdot\nabla V(0)\b n$ on $\Gamma_g$.

\end{remark}

We can now reformulate those optimality conditions under more regularity assumptions on the minimizing set $E_0=\{\varphi_0=1\}$ and the given data. In particular, we can then compare our results to those obtained in literature, see Remark \ref{r:compare}.

\begin{theorem}\label{t:ElastSharpSmoothReform} Assume \ref{a:ElastOmega}-\ref{a:ElastDiffAss}.	Let $\left(\varphi_0,\b u_0\right)\in\Phi_E^0\times\b H^1_D(\Omega)$ be minimizers of $\eqref{e:ElastObjFctSharp}-\eqref{e:ElastSharpStateWeak}$. Assume there are open sets $\Omega_1,\Omega_2\subset\Omega$ such that $\varphi_0=1$ a.e. on $\Omega_1$ and $\varphi_0=-1$ a.e. in $\Omega_2$. Let $\b g\in\b H^{\frac12}\left(\partial\Omega\right)$ and the objective functional is assumed to be chosen in such a way that $\der_2h_\Omega\left(\cdot,\b u\right)\in\b L^2(\Omega)$ and $\der_2h_\Gamma\left(\cdot,\b u\right)\in\b H^{\frac12}(\Gamma_g)$ for all $\b u\in\b H^1(\Omega)$. If $h_\Gamma\left(\cdot,\b u_0\left(\cdot\right)\right)\not\equiv0$, we assume additionally that $\Gamma_g$ has $C^2$-regularity. Assume that $\Gamma_0:=\partial\Omega_1\cap\partial\Omega_2\in C^2$ and $d\left(\Gamma_0,\partial\Omega\right)>0$. By 
	$\left[\b w\right]_{\Gamma_0}(x):=\b w|_{\Omega_1}(x)-\b w|_{\Omega_2}(x)$	we denote the jump of $\b w$ along the interface $\Gamma_0$, and $\nu$ is the outer unit normal on $\Omega_1$. Let $\kappa=\div_{\Gamma_0}\nu$ be the mean curvature of $\Gamma_0$. Then the optimality conditions derived in Theorem~\ref{t:ElastGeomVarSharp} are equivalent to the following system:
	
\begin{equation}\label{e:ElastSharpSmoothReformOptCondOnGamma}\left.
	\begin{aligned}
\gamma c_0\kappa-&\left[\C(\varphi_0)\left(\Epsilon\left(\b u_0\right)-\overline\Epsilon\left(\varphi_0\right)\right):\Epsilon\left(\b q_0\right)\right]_{\Gamma_0}+\\
&+\left[\C(\varphi_0)\left(\Epsilon\left(\b u_0\right)-\overline\Epsilon\left(\varphi_0\right)\right)\nu\cdot\partial_{\nu}\b q_0\right]_{\Gamma_0}+\\
&+\left[\C(\varphi_0)\Epsilon\left(\b q_0\right)\nu\cdot\partial_{\nu}\b u_0\right]_{\Gamma_0}+2\lambda_0+\left[h_\Omega\left(x,\b u_0\right)\right]_{\Gamma_0}=0\quad\text{ on }\Gamma_0
\end{aligned}\right\}\end{equation}

	\begin{equation}\label{e:ElastSharpLagragne}
\lambda_0\left(\int_\Omega\varphi_0\dx-\beta\left|\Omega\right|\right)=0,\,\,\lambda_0\geq0,\,\,\int_\Omega\varphi_0\dx\leq\beta\left|\Omega\right|,\end{equation}	
together with the state equation $\eqref{e:ElastPhaseFirstState}$ connecting $\varphi_0$ and $\b u_0$. Here, the adjoint variable $\b q_0\in\b H^1_D(\Omega)$ is the unique solution of the adjoint equation
\begin{equation}\label{e:Adjoint}\int_\Omega\C(\varphi_0)\Epsilon\left(\b q_0\right):\Epsilon\left(\b v\right)\dx=\int_\Omega\der_2h_\Omega\left(x,\b u_0\right)\b v\dx+\int_{\Gamma_g}\der_2h_\Gamma\left(s,\b u_0\right)\b v\ds\quad\forall\b v\in\b H^1_D(\Omega).\end{equation}

\end{theorem}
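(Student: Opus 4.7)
The plan is to take the optimality condition from Theorem~\ref{t:ElastGeomVarSharp} and rewrite it, under the extra smoothness of $\Gamma_0$, purely as a surface integral over $\Gamma_0$ of a quantity multiplied by $V(0)\cdot\nu$. Since $V(0)\cdot\nu$ can be prescribed as essentially any smooth function on $\Gamma_0$ by an appropriate choice of $V\in\mathcal V_{ad}$ (using that $d(\Gamma_0,\partial\Omega)>0$, so the constraints $V(0)\cdot \b n=0$ on $\partial\Omega$ and $V(0)=0$ on $\Gamma_D$ do not restrict the normal trace on $\Gamma_0$), the fundamental lemma of the calculus of variations then yields the pointwise equation \eqref{e:ElastSharpSmoothReformOptCondOnGamma}. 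The converse direction is immediate once the reformulation is established. Throughout I will use that $\Gamma_0\in C^2$ together with the given data regularity implies by standard elliptic regularity applied to \eqref{e:ElastPhaseFirstState} and \eqref{e:Adjoint} separately on $\Omega_1$ and $\Omega_2$ that $\b u_0|_{\Omega_i},\b q_0|_{\Omega_i}\in\b H^2(\Omega_i)$, so that traces and normal derivatives on $\Gamma_0$ are well-defined and the jumps $[\cdot]_{\Gamma_0}$ make sense.

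First I would eliminate the material derivative $\dot{\b u}_0[V]$. Using $\b v=\dot{\b u}_0[V]\in\b H^1_D(\Omega)$ as test function in the adjoint equation \eqref{e:Adjoint} yields
\[
\int_\Omega\der_2h_\Omega(x,\b u_0)\dot{\b u}_0[V]\dx+\int_{\Gamma_g}\der_2h_\Gamma(s,\b u_0)\dot{\b u}_0[V]\ds=\int_\Omega\C(\varphi_0)\Epsilon(\b q_0):\Epsilon(\dot{\b u}_0[V])\dx,
\]
and the right-hand side can in turn be evaluated by testing \eqref{e:ElastPhaseDotUEpsilonEquationWeak} with $\b v=\b q_0$. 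This replaces the $\der h_\Omega$ and $\der h_\Gamma$ contributions inside \eqref{e:ElastSharpPartTJ0E2} by an expression involving only $V(0)$, $\nabla V(0)$, $\b u_0$, $\b q_0$ and the data $\b f,\b g$; together with the remaining $\div V(0)$ and $\b n\cdot\nabla V(0)\b n$ terms of \eqref{e:ElastSharpPartTJ0E2}, this gives $\partial_t|_{t=0}\mathbb H(\b S(\varphi_0\circ T_t^{-1}))$ as a pure volume/boundary integral linear in $V(0)$ and $\nabla V(0)$.

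Next I would localize everything to $\Gamma_0$ by integration by parts on $\Omega_1$ and $\Omega_2$ separately. The perimeter term becomes $\gamma c_0\int_{\Gamma_0}\div_{\Gamma_0}V(0)\,\mathrm d\mathcal H^{d-1}$, which by the tangential divergence theorem on the closed $C^2$ surface $\Gamma_0$ equals $\gamma c_0\int_{\Gamma_0}\kappa\, V(0)\cdot\nu\,\mathrm d\mathcal H^{d-1}$. The Lagrange term $-\lambda_0\int_\Omega\varphi_0\div V(0)\dx$ splits as $-\lambda_0\int_{\Omega_1}\div V(0)+\lambda_0\int_{\Omega_2}\div V(0)$; since $V(0)\cdot\b n=0$ on $\partial\Omega$, the divergence theorem leaves exactly $-2\lambda_0\int_{\Gamma_0}V(0)\cdot\nu$. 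For the elastic terms in the reformulated derivative of $\mathbb H$, I would use the strong form of the state and adjoint equations on $\Omega_i$ (obtained from the $\b H^2$-regularity) to cancel the contributions involving $\b f$ and $\b g$ via integration by parts, and then rewrite the remaining volume integrals of $\C(\varphi_0)(\Epsilon(\b u_0)-\overline\Epsilon(\varphi_0)):\Epsilon(\b q_0)\div V(0)$ and the two $\nabla V(0)$-type quadratic terms as surface integrals on $\Gamma_0$; tangential components of $V(0)$ cancel (since $\b u_0,\b q_0$ are continuous across $\Gamma_0$ and the integrands differ only in their non-tangential parts), leaving exactly the jumps $[\C(\varphi_0)(\Epsilon(\b u_0)-\overline\Epsilon(\varphi_0)):\Epsilon(\b q_0)]_{\Gamma_0}$, $[\C(\varphi_0)(\Epsilon(\b u_0)-\overline\Epsilon(\varphi_0))\nu\cdot\partial_\nu\b q_0]_{\Gamma_0}$, $[\C(\varphi_0)\Epsilon(\b q_0)\nu\cdot\partial_\nu\b u_0]_{\Gamma_0}$ and $[h_\Omega(x,\b u_0)]_{\Gamma_0}$, each multiplied by $V(0)\cdot\nu$.

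The main obstacle is this last bookkeeping step: identifying which combinations of the volume integrals collapse to tangential derivatives (and thus vanish), and which combine into the specific normal-jump expressions appearing in \eqref{e:ElastSharpSmoothReformOptCondOnGamma}. The standard trick is to decompose $\nabla V(0)=\nabla_{\Gamma_0}V(0)+\nu\otimes\partial_\nu V(0)$ on $\Gamma_0$ and to use the Hadamard-type identity that for any quantity $w$ which is continuous across $\Gamma_0$, the tangential parts of the jump cancel and only the normal jump $[\partial_\nu w]_{\Gamma_0}$ survives (applied here to $\b u_0$ and $\b q_0$, both of which are globally $\b H^1$ and hence have no tangential jump). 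Once each term is brought into the form $(\cdots)\,V(0)\cdot\nu$ on $\Gamma_0$, the surface integrals for the $\Gamma_g$-boundary contributions of \eqref{e:ElastSharpPartTJ0E2} vanish because $V(0)\cdot\b n=0$ on $\partial\Omega$ (using the identity of Remark~\ref{r:ElastSharpGammgRegualirSurfDiv} and the $C^2$-regularity of $\Gamma_g$ assumed when $h_\Gamma(\cdot,\b u_0)\not\equiv 0$), so only the $\Gamma_0$ integrals remain. Collecting signs gives \eqref{e:ElastSharpSmoothReformOptCondOnGamma}, and the sign and complementarity condition \eqref{e:ElastSharpLagragne} on $\lambda_0$ is inherited unchanged from Theorem~\ref{t:ElastGeomVarSharp}.
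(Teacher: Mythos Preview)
Your proposal is correct and matches the paper's approach: the paper does not give a detailed argument but simply states that the result follows from ``basic calculations such as integration by parts and chain rule'' and refers to \cite[Theorem 25.3]{hecht} for the computation. Your outline---eliminating $\dot{\b u}_0[V]$ via the adjoint equation, applying the tangential divergence theorem for the perimeter term, integrating by parts on $\Omega_1$ and $\Omega_2$ separately with the $\b H^2$-regularity to produce jump terms on $\Gamma_0$, and then invoking the arbitrariness of $V(0)\cdot\nu$ on $\Gamma_0$---is precisely the standard calculation the paper has in mind.
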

The statement of Theorem \ref{t:ElastSharpSmoothReform} can be shown by using basic calclations such as integration by parts and chain rule. This calculation has been carried out for instance in \cite[Theorem 25.3]{hecht}.

\begin{remark}\label{r:compare}
	  In \cite{relatingphasefield} the same optimality system for the sharp interface setting has been derived from the phase field model by formally matched asymptotics for the mean compliance and compliant mechanism problems mentioned in Examples~\ref{ex:MeanComp} and~\ref{ex:CompMech}. However, no eigenstrain has been taken into account in \cite{relatingphasefield} while this is included in the above problem setting. Applying shape sensitivity analysis yields the same result as we have found in Theorem~\ref{t:ElastSharpSmoothReform} (see e.g. \cite{allairemulti,allaire2010damage,sturm}).
\end{remark}

\subsection{Phase field approximation}\label{s:ElastPhase}
In a diffuse interface setting in terms of a phase field formulation the shape and topology optimization problem of finding the optimal material distribution of two given materials is given by
\begin{equation}\begin{split}\label{e:ElastObjFctPhase}
\min_{(\varphi,\b u)} J_\epsilon\left(\varphi,\b u\right)&:=\mathbb H\left(\b u\right)+\gamma E_\epsilon\left(\varphi\right)=\\
&=\int_\Omega h_\Omega\left(x,\b u\right)\dx+\int_{\Gamma_g} h_\Gamma\left(s,\b u\right)\ds+\gamma\int_\Omega\frac{\epsilon}{2}\left|\nabla\varphi\right|^2+\frac1\epsilon\psi\left(\varphi\right)\dx
\end{split}\end{equation}
with
$\left(\varphi,\b u\right)\in\Phi_{ad}\times\b H^1_D(\Omega)$ such that $\eqref{e:ElastPhaseFirstState}$ holds, i.e.
\begin{align}\label{e:ElastPhaseStateWeak}\int_\Omega\C\left(\varphi\right)\left(\Epsilon\left(\b u\right)-\overline\Epsilon\left(\varphi\right)\right):\Epsilon\left(\b v\right)\dx=\int_\Omega\b f\cdot\b v\dx+\int_{\Gamma_g}\b g\cdot\b v\ds\quad\forall\b v\in\b H^1_D(\Omega).\end{align}
Hence the design variable in this problem is given by $\varphi\in\Phi_{ad}=\{\varphi\in H^1(\Omega)\mid\int_\Omega\varphi\dx\leq\beta|\Omega|,\,\left|\varphi\right|\leq1\text{ a.e. in }\Omega\}$. The regions filled with material one or two are represented by $\left\{x\in\Omega\mid\varphi(x)=1\right\}$ and $\left\{x\in\Omega\mid\varphi(x)=-1\right\}$, respectively. The design variable $\varphi$ is also allowed to take values between minus one and one, which leads to a small transitional area whose thickness is proportional to a small parameter $\epsilon>0$. Thus, as $\epsilon$ tends to zero, we will arrive in a sharp interface problem and the interfacial layer vanishes.\\
As was already discussed in Section \ref{s:NotAss}, the Ginzburg-Landau energy $E_\epsilon$, compare $\eqref{e:GinzburgLandau}$, appearing in the objective functional is essential for the existence of a minimizer and $(E_\epsilon)_\epsilon$ $\Gamma$-converge to a multiple of the perimeter functional as $\epsilon$ tends to zero, cf. \cite{modica,modica_mortola}. The parameter $\gamma>0$ is an arbitrary fixed constant and can be considered as a weighting factor of the perimeter penalization.\\


We know from Lemma \ref{l:SolOp} that there is a solution operator $\b S$ for the constraints $\eqref{e:ElastPhaseStateWeak}$. Thus, we can reformulate the optimization problem into $\min_{\varphi\in L^1(\Omega)}j_\epsilon(\varphi)$ where $j_\epsilon:L^1(\Omega)\to\overline\R$, 

$$j_\epsilon(\varphi):=\begin{cases}J_\epsilon\left(\varphi,\b S\left(\varphi\right)\right), &\text{if }\varphi\in\Phi_{ad},\\ +\infty,&\text{otherwise,}\end{cases}$$ 
is the reduced objective functional. We start by discussing the optimization problem $\eqref{e:ElastObjFctPhase}-\eqref{e:ElastPhaseStateWeak}$ in view of well-posedness for fixed $\epsilon>0$.
\begin{theorem}\label{t:ElastPhaseExistMin}
	Under the assumptions \ref{a:ElastOmega}-\ref{a:ElastObjectiveFctl}, there exists at least one minimizer of 
$\eqref{e:ElastObjFctPhase}-\eqref{e:ElastPhaseStateWeak}$.\\
\end{theorem}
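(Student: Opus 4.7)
The plan is to apply the direct method of the calculus of variations, closely following the strategy used in the proof of Theorem~\ref{t:SharpExistMin}, but with the Ginzburg--Landau regularization $E_\epsilon$ playing the role of the perimeter functional. First I would reformulate the problem as $\min_{\varphi\in L^1(\Omega)} j_\epsilon(\varphi)$. By Assumption~\ref{a:ElastObjectiveFctl} the mapping $\varphi\mapsto\mathbb H(\b S(\varphi))$ is bounded from below on $\overline\Phi_{ad}$, and since $E_\epsilon\geq 0$, the functional $j_\epsilon$ is bounded from below. Hence one can pick a minimizing sequence $(\varphi_k)_{k\in\N}\subset\Phi_{ad}$.

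Next I would derive uniform bounds on $(\varphi_k)$. The constraint $|\varphi_k|\leq 1$ a.e. provides a uniform $L^\infty$-bound (in particular $\psi(\varphi_k)$ is bounded and $\psi_0\in[0,\tfrac12]$ on $[-1,1]$). Subtracting the lower bound of $\mathbb H(\b S(\cdot))$ from the (bounded) values $j_\epsilon(\varphi_k)$ yields a uniform bound on $\gamma E_\epsilon(\varphi_k)$, which for fixed $\epsilon>0$ implies a uniform bound on $\|\nabla\varphi_k\|_{L^2(\Omega)}$. Consequently $(\varphi_k)$ is bounded in $H^1(\Omega)$. By reflexivity and the Rellich--Kondrachov compact embedding $H^1(\Omega)\hookrightarrow L^2(\Omega)$, a (non-relabeled) subsequence satisfies $\varphi_k\rightharpoonup\varphi$ in $H^1(\Omega)$, $\varphi_k\to\varphi$ in $L^2(\Omega)$ (thus in $L^1(\Omega)$) and pointwise a.e. The pointwise limit gives $|\varphi|\leq 1$ a.e., and the linear integral inequality passes to the limit, so $\varphi\in\Phi_{ad}$.

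It then remains to verify the lower semicontinuity $j_\epsilon(\varphi)\leq\liminf_{k\to\infty} j_\epsilon(\varphi_k)$. The Dirichlet part $\int_\Omega\tfrac{\epsilon}{2}|\nabla\varphi|^2\dx$ is convex and continuous on $H^1$, hence weakly lower semicontinuous. For the potential part, since $\psi_0$ is continuous and bounded on $[-1,1]$ and $\varphi_k\to\varphi$ pointwise a.e. with the uniform bound $|\varphi_k|\leq 1$, dominated convergence gives $\int_\Omega\psi(\varphi_k)\dx\to\int_\Omega\psi(\varphi)\dx$. For the state-dependent part I would invoke Lemma~\ref{l:ElastConvGammaConvFEcont}, already used in the proof of Theorem~\ref{t:SharpExistMin}, which yields the continuity of $\varphi\mapsto\mathbb H(\b S(\varphi))$ with respect to $L^1$-convergence in $\overline\Phi_{ad}$. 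Combining these three properties we conclude $j_\epsilon(\varphi)\leq\liminf_{k\to\infty} j_\epsilon(\varphi_k)=\inf j_\epsilon$, so $(\varphi,\b S(\varphi))$ is a minimizer.

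The only delicate point is the continuity of $\varphi\mapsto\mathbb H(\b S(\varphi))$ under $L^1$-convergence of admissible designs: one needs $\b S(\varphi_k)\to\b S(\varphi)$ strongly in $\b H^1_D(\Omega)$ (or at least in $\b L^2(\Omega)$ and on the trace space of $\Gamma_g$) in order to chain it with the continuity of the Nemytskii operators from Remark~\ref{r:ContObjFctlElast}. This is exactly what Lemma~\ref{l:ElastConvGammaConvFEcont} provides by means of the uniform ellipticity in Assumption~\ref{a:ElastTen}, Korn's inequality and the a priori estimate~\eqref{e:ElastPHaseSTateEquaAprioriestimate}; all remaining ingredients are routine weak-compactness and lower-semicontinuity arguments.
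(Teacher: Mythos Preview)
Your proposal is correct and follows essentially the same approach as the paper's (sketch of) proof: direct method, uniform $H^1$-bound from the Ginzburg--Landau energy together with the pointwise constraint, compact embedding $H^1(\Omega)\hookrightarrow L^2(\Omega)$, weak lower semicontinuity of $E_\epsilon$, and continuity of $\varphi\mapsto\mathbb H(\b S(\varphi))$ via Lemma~\ref{l:ElastConvGammaConvFEcont}. The only cosmetic difference is that you split the lower semicontinuity of $E_\epsilon$ into its gradient and potential parts explicitly, whereas the paper simply invokes lower semicontinuity of $E_\epsilon$ with respect to $L^2$-convergence.
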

{\em Sketch of a proof}. Proof of the previous theorem
This is established by using the direct method in the calculus of variations. For this purpose, we follow the lines of the proof of Theorem \ref{t:SharpExistMin}. Instead of using the compact imbedding $BV(\Omega)\hookrightarrow L^1(\Omega)$ we use here that $H^1(\Omega)$ embeds compactly into $L^2(\Omega)$ and replace the lower semicontinuity of the perimeter functional by the lower semicontinuity of the Ginzburg-Landau energy with respect to convergence in $L^2(\Omega)$. For more details we refer to \cite[Theorem 24.1]{hecht} or \cite{relatingphasefield}, where the case with $\overline\Epsilon\equiv0$ is treated.
\qquad$\square$

We obtain optimality conditions by geometric variations:

\begin{theorem}\label{t:PhseOptCond} Assume \ref{a:ElastOmega}-\ref{a:ElastDiffAss}. Then for any minimizer $\left(\varphi_\epsilon,\b u_\epsilon\right)$ of $\eqref{e:ElastObjFctPhase}-\eqref{e:ElastPhaseStateWeak}$ the following necessary optimality conditions hold: There exists a Lagrange multiplier $\lambda_\epsilon\geq0$ for the integral constraint such that
	
	\begin{align}\label{e:ElastPhaseOptCond1}\partial_t|_{t=0}j_\epsilon\left(\varphi_\epsilon\circ T_t^{-1}\right)=-\lambda_\epsilon\int_\Omega\varphi_\epsilon\div V(0)\dx,\quad\lambda_\epsilon\left(\int_\Omega\varphi_\epsilon\dx-\beta\left|\Omega\right|\right)=0
	\end{align}
	holds for all $T\in{\mathcal T}_{ad}$ with corresponding velocity $V\in{\mathcal V}_{ad}$. The derivative is given by the following formula:
		\begin{equation}\begin{split}
	\label{e:ElastPhaseBoundaryVariationJEpsilonDifTerm}	&\partial_t|_{t=0}j_\epsilon\left(\varphi_\epsilon\circ T_t^{-1}\right)=\partial_t|_{t=0}\mathbb H\left(\b S\left(\varphi_\epsilon\circ T_t^{-1}\right)\right)+\\	&+\int_\Omega\left(\frac{\gamma\epsilon}{2}\left|\nabla\varphi_\epsilon\right|^2+\frac\gamma\epsilon\psi\left(\varphi_\epsilon\right)\right)\div V(0)-\gamma\epsilon\nabla\varphi_\epsilon\cdot\nabla V(0)\nabla\varphi_\epsilon\dx
	\end{split}\end{equation}

where $\dot{\b u}_\epsilon\left[V\right]\in\b H^1_D(\Omega)$ is given as the solution of $\eqref{e:ElastPhaseDotUEpsilonEquationWeak}$ with $\varphi_0$ replaced by $\varphi_\epsilon$ and $\b u_0$ by $\b u_\epsilon$. The exact formula for $\partial_t|_{t=0}\mathbb H\left(\b S\left(\varphi_\epsilon\circ T_t^{-1}\right)\right)$ is given in $\eqref{e:ElastSharpPartTJ0E2}$.\\
\end{theorem}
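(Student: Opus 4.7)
The plan is to exploit the minimality of $\varphi_\epsilon$ along the one-parameter family of transport perturbations $\varphi_t := \varphi_\epsilon \circ T_t^{-1}$, for $V \in \mathcal{V}_{ad}$ with associated transformation $T \in \mathcal{T}_{ad}$, compute $\partial_t|_{t=0} j_\epsilon(\varphi_t)$ explicitly, and then extract the necessary condition and the multiplier $\lambda_\epsilon$ from the one-sided nature of the volume inequality constraint. As a first step, since $T_t : \overline\Omega \to \overline\Omega$ is a $C^2$-diffeomorphism for small $|t|$ (Remark \ref{r:NotAssPropertiesOfTransformations}), the chain rule yields $\varphi_t \in H^1(\Omega)$ with $|\varphi_t| \leq 1$ a.e., so $\varphi_t \in \overline\Phi_{ad}$. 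A change of variables gives $\int_\Omega \varphi_t \dx = \int_\Omega \varphi_\epsilon \det(DT_t) \dx$ with derivative at $t=0$ equal to $\int_\Omega \varphi_\epsilon \div V(0)\dx$; this is generally nonzero, which is exactly what forces the Lagrange multiplier to appear.

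I would then compute $\partial_t|_{t=0} j_\epsilon(\varphi_t)$ term by term. For the Ginzburg--Landau contribution, substituting $y = T_t^{-1}(x)$ recasts $E_\epsilon(\varphi_t)$ as $\int_\Omega \bigl[\tfrac{\epsilon}{2}|DT_t(y)^{-T}\nabla\varphi_\epsilon(y)|^2 + \tfrac{1}{\epsilon}\psi(\varphi_\epsilon(y))\bigr]\det(DT_t(y))\dy$, and differentiation at $t=0$ using $\partial_t|_{t=0}\det(DT_t)=\div V(0)$ and $\partial_t|_{t=0} DT_t^{-T} = -\nabla V(0)^T$ reproduces the second line of \eqref{e:ElastPhaseBoundaryVariationJEpsilonDifTerm}. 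For the mechanical part I would set $\b u(t):=\b S(\varphi_t)$, transport the state equation \eqref{e:ElastPhaseStateWeak} onto the reference configuration via $y=T_t^{-1}(x)$, and apply an implicit function theorem argument (using coercivity from \eqref{e:UniforEstimateElast} and Korn's inequality) to obtain $C^1$-dependence of $\b u(t)$ on $t$ and to identify $\dot{\b u}_\epsilon[V]:=\partial_t|_{t=0}\b u(t)$ as the unique solution of \eqref{e:ElastPhaseDotUEpsilonEquationWeak}. Fréchet differentiability of $\mathbb{H}$ from Remark \ref{r:ElastObjFctFrechet}, together with a second change-of-variables argument on $\Omega$ and on $\Gamma_g$ that handles the explicit spatial dependence of $h_\Omega$, $h_\Gamma$ and the surface Jacobian, then produces \eqref{e:ElastSharpPartTJ0E2}.

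The Lagrange multiplier is extracted by a standard first-order argument. Write $L(V):=\partial_t|_{t=0} j_\epsilon(\varphi_\epsilon\circ T_t^{-1})$ and $G(V):=\int_\Omega \varphi_\epsilon \div V(0)\dx$, both linear in $V(0)$ by the computation above. If the volume constraint is inactive, then for every $V\in\mathcal{V}_{ad}$ both $\pm V$ produce $\varphi_{\pm t}\in\Phi_{ad}$ for small $|t|$, hence $L(V)=0$ and $\lambda_\epsilon:=0$ satisfies \eqref{e:ElastPhaseOptCond1}. If the constraint is active, then $\varphi_t\in\Phi_{ad}$ for small $t>0$ whenever $G(V)\leq 0$, so minimality gives $L(V)\geq 0$ on that half-space. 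Picking a reference $V_0\in\mathcal{V}_{ad}$ with $G(V_0)>0$ (which exists because $\beta\in(-1,1)$ forces $\varphi_\epsilon$ to be nonconstant and any $V$ with $V\cdot\b n=0$ on $\partial\Omega$ may have arbitrary $\div V(0)$ inside), I set $\lambda_\epsilon:=-L(V_0)/G(V_0)\geq 0$. Decomposing an arbitrary $V$ into a $G$-null part and a multiple of $V_0$ and applying $L\equiv 0$ on the kernel of $G$ yields $L(V)=-\lambda_\epsilon G(V)$ for all $V\in\mathcal{V}_{ad}$, which is \eqref{e:ElastPhaseOptCond1}.

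The chief technical obstacle I anticipate is the rigorous justification of differentiability of $t\mapsto\b S(\varphi_t)$ at $t=0$ and the derivation of \eqref{e:ElastPhaseDotUEpsilonEquationWeak}. Pushing the state equation onto the reference configuration is convenient in that $\C(\varphi_t)\circ T_t=\C(\varphi_\epsilon)$ and $\overline\Epsilon(\varphi_t)\circ T_t=\overline\Epsilon(\varphi_\epsilon)$, but the symmetric gradient $\Epsilon(\b u(t))\circ T_t$ picks up $DT_t^{-T}$ factors, the volume element contributes $\det(DT_t)$, and the Neumann boundary integral on $\Gamma_g$ carries a surface Jacobian that must also be differentiated; controlling all these pieces simultaneously to isolate the right-hand side of \eqref{e:ElastPhaseDotUEpsilonEquationWeak} is where the bulk of the work sits. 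Once differentiability of $\b S$ along transport is secured, the remaining assembly into \eqref{e:ElastPhaseBoundaryVariationJEpsilonDifTerm} is essentially a bookkeeping exercise that parallels the sharp interface calculation leading to Theorem \ref{t:ElastGeomVarSharp}.
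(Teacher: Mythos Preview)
Your approach is essentially the same as the paper's: the paper's proof simply says ``follow the lines of the proof of Theorem~\ref{t:ElastGeomVarSharp}'' (which is exactly your implicit-function-theorem argument for $t\mapsto\b S(\varphi_t)$, change of variables for the $\Omega$- and $\Gamma_g$-integrals in $\mathbb H$, and the Lagrange multiplier construction), adding only that the Ginzburg--Landau terms are handled by direct calculation via the change of variables you describe.

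Two small points. First, what the implicit function theorem yields (Lemma~\ref{l:DerOptCondLem}) is differentiability of $t\mapsto \b u(t)\circ T_t$, and $\dot{\b u}_\epsilon[V]$ is defined as $\partial_t|_{t=0}\bigl(\b u(t)\circ T_t\bigr)$, the material derivative, not $\partial_t|_{t=0}\b u(t)$; equation~\eqref{e:ElastPhaseDotUEpsilonEquationWeak} characterises precisely this object, so your identification should be adjusted accordingly. Second, your Lagrange multiplier extraction differs in style from the paper: the paper fixes an auxiliary $W$ with $\int_\Omega\varphi_\epsilon\div W(0)\dx=-1$, applies the implicit function theorem to $(t,s)\mapsto\int_\Omega\varphi_\epsilon\circ T_t^{-1}\circ S_s^{-1}\dx$ to produce a genuine admissible curve, and reads off $\lambda_\epsilon$ from its derivative; your half-space/linear-functional argument is also valid, but note that ``$\varphi_t\in\Phi_{ad}$ for small $t>0$ whenever $G(V)\le 0$'' is only immediate when $G(V)<0$, so the case $G(V)=0$ requires a limiting step.
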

The proof can be found in Section \ref{s:ProofOptcond}.

\begin{remark}\label{r:VarInequality}
	One can also consider $\eqref{e:ElastObjFctPhase}-\eqref{e:ElastPhaseStateWeak}$ in the framework of optimal control problems. By parametric variations one then obtains as necessary optimality conditions the following variational inequality:
	\begin{equation}\begin{split}\label{e:ElastVarInequality}j'_\epsilon(\varphi_\epsilon)(\varphi-\varphi_\epsilon)+\lambda_\epsilon\int_\Omega\left(\varphi-\varphi_\epsilon\right)\dx\geq 0\quad\forall\varphi\in\overline{\Phi}_{ad}\end{split}\end{equation}
	where
		\begin{equation}\label{e:ReducedObjFctlDirDer}\begin{aligned}
		&j'_\epsilon(\varphi_\epsilon)(\varphi-\varphi_\epsilon)=\left(\gamma\epsilon\nabla\varphi_\epsilon,\nabla\left(\varphi-\varphi_\epsilon\right)\right)_{\b L^2\left(\Omega\right)}+\\
		&+\left(\frac\gamma\epsilon\psi'_0\left(\varphi_\epsilon\right)+\left(\C(\varphi_\epsilon)\overline\Epsilon'\left(\varphi_\epsilon\right)-\C'\left(\varphi_\epsilon\right)\left(\Epsilon\left(\b u_\epsilon\right)-\overline\Epsilon\left(\varphi_\epsilon\right)\right)\right):\Epsilon\left(\b q_\epsilon\right),\varphi-\varphi_\epsilon\right)_{L^2\left(\Omega\right)}\end{aligned}\end{equation}
	is the directional derivative of the reduced objective functional $j_\epsilon$. The variational inequality $\eqref{e:ElastVarInequality}$ has to be fulfilled	together with the state equations $\eqref{e:ElastPhaseStateWeak}$ and the adjoint equation $\eqref{e:Adjoint}$. This approach can for instance be used for numerical methods. More details on these optimality criteria can be found in \cite{relatingphasefield, hecht}.
\end{remark}

\subsection{Sharp interface limit}\label{s:Limit}

In this section we give the results on relating the phase field problems introduced in Section \ref{s:ElastPhase} to the sharp interface formulation, which was discussed in Section \ref{s:ElastSharpProblemForm}. 

\begin{theorem}\label{t:ElastConvGammaConv}
	Under the assumptions \ref{a:ElastOmega}-\ref{a:ElastObjectiveFctl}, the functionals $\left(j_\epsilon\right)_{\epsilon>0}$ $\Gamma$-converge in $L^1(\Omega)$ to $j_0$ as $\epsilon\searrow0$.\\ 
\end{theorem}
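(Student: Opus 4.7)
The plan is to apply the standard two-step $\Gamma$-convergence scheme, reducing the statement to the classical Modica-Mortola theorem (\cite{modica, modica_mortola}) for the Ginzburg-Landau energy $E_\epsilon$ combined with the $L^1$-continuity of the map $\varphi \mapsto \mathbb{H}(\b S(\varphi))$ furnished by Lemma \ref{l:ElastConvGammaConvFEcont}. Since $j_\epsilon$ is the sum of a ``geometric'' part $\gamma E_\epsilon$ (whose $\Gamma$-limit is $\gamma c_0 P_\Omega(\{\varphi=1\})$) and a ``state'' part $\mathbb{H}\circ\b S$ (which is continuous), the liminf and recovery constructions decouple cleanly. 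Admissibility ($|\varphi|\leq 1$ a.e., $\int_\Omega\varphi\dx\leq\beta|\Omega|$) then needs to be tracked in both directions.

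For the liminf inequality, take any $\varphi_\epsilon\to\varphi$ in $L^1(\Omega)$. We may assume $\liminf_\epsilon j_\epsilon(\varphi_\epsilon)<\infty$ (otherwise nothing to prove) and pass to a subsequence realizing the liminf, so that $\varphi_\epsilon\in\Phi_{ad}$ for all small $\epsilon$. The pointwise bound $|\varphi_\epsilon|\le 1$ and $\int_\Omega\varphi_\epsilon\dx\le\beta|\Omega|$ pass to the limit and give $|\varphi|\le 1$ a.e.\ with $\int_\Omega\varphi\dx\le\beta|\Omega|$. Using the lower bound on $\mathbb{H}\circ\b S$ from \ref{a:ElastObjectiveFctl}, $E_\epsilon(\varphi_\epsilon)$ is uniformly bounded, and Modica-Mortola forces $\varphi\in BV(\Omega,\{\pm 1\})$, hence $\varphi\in\Phi_{ad}^0$, together with
\[
\gamma\liminf_{\epsilon\to 0}E_\epsilon(\varphi_\epsilon)\ \ge\ \gamma c_0\,P_\Omega(\{\varphi=1\}).
\]
Lemma \ref{l:ElastConvGammaConvFEcont} provides $\mathbb{H}(\b S(\varphi_\epsilon))\to\mathbb{H}(\b S(\varphi))$, and adding the two contributions gives $j_0(\varphi)\le\liminf_\epsilon j_\epsilon(\varphi_\epsilon)$.

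For the recovery sequence, the case $j_0(\varphi)=+\infty$ is trivial, so fix $\varphi\in\Phi_{ad}^0$. Standard Modica-Mortola constructions adapted to the double obstacle potential (\cite{modica_mortola, blowey_elliot}) provide $\tilde\varphi_\epsilon\in H^1(\Omega)$ with $|\tilde\varphi_\epsilon|\le 1$ a.e., $\tilde\varphi_\epsilon\to\varphi$ in $L^1(\Omega)$ and $E_\epsilon(\tilde\varphi_\epsilon)\to c_0 P_\Omega(\{\varphi=1\})$. The remaining issue is the volume constraint: if $\int_\Omega\varphi\dx<\beta|\Omega|$ strictly, then $L^1$-convergence yields $\tilde\varphi_\epsilon\in\Phi_{ad}$ for small $\epsilon$. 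In the borderline case $\int_\Omega\varphi\dx=\beta|\Omega|$ we set $\varphi_\epsilon:=\max(\tilde\varphi_\epsilon-c_\epsilon,\,-1)$ for a suitable $c_\epsilon\searrow 0$, which restores the integral constraint, preserves $|\varphi_\epsilon|\le 1$ and the $L^1$-limit, and does not disturb the Modica-Mortola energy limit. Lemma \ref{l:ElastConvGammaConvFEcont} again gives $\mathbb{H}(\b S(\varphi_\epsilon))\to\mathbb{H}(\b S(\varphi))$, hence $\limsup_\epsilon j_\epsilon(\varphi_\epsilon)\le j_0(\varphi)$.

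The main obstacle is the recovery construction in the borderline case $\int_\Omega\varphi\dx=\beta|\Omega|$, where one must simultaneously respect the pointwise bound $|\cdot|\le 1$ and the integral inequality while keeping the Ginzburg-Landau energy converging to the correct perimeter limit; this is a standard but slightly delicate correction familiar from the constrained Modica-Mortola literature. Every other step is reduced to invoking already-available tools: classical Modica-Mortola for the geometric part and Lemma \ref{l:ElastConvGammaConvFEcont} for the state-dependent part.
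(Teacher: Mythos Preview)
Your proposal is correct and follows the same route as the paper: both reduce the statement to the classical Modica--Mortola $\Gamma$-convergence of $E_\epsilon$ combined with the $L^1$-continuity of $\varphi\mapsto\mathbb H(\b S(\varphi))$ provided by Lemma~\ref{l:ElastConvGammaConvFEcont}. The paper simply packages this more compactly, writing $j_\epsilon=\gamma E_\epsilon+(F_E+I_K)$ with $I_K$ the indicator of the integral constraint and invoking the stability of $\Gamma$-limits under addition of a continuous perturbation \cite{dalmaso}, whereas you spell out the liminf and limsup inequalities by hand; this is a presentational rather than a substantive difference.

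The one point worth flagging is your treatment of the borderline case $\int_\Omega\varphi\,\mathrm dx=\beta|\Omega|$. Your shift-and-truncate correction $\varphi_\epsilon=\max(\tilde\varphi_\epsilon-c_\epsilon,-1)$ adds, on the bulk phase $\{\tilde\varphi_\epsilon=1\}$, a potential contribution $\tfrac1\epsilon\psi(1-c_\epsilon)\approx c_\epsilon/\epsilon$ per unit volume, so one would still have to check $c_\epsilon=o(\epsilon)$, which does not follow from mere $L^1$-convergence of $\tilde\varphi_\epsilon$. The cleaner (and standard) resolution, on which the paper implicitly relies through \cite{modica, blowey_elliot}, is to use recovery sequences that already satisfy the exact mass condition $\int_\Omega\tilde\varphi_\epsilon\,\mathrm dx=\int_\Omega\varphi\,\mathrm dx$; these lie in $\Phi_{ad}$ without any correction, and no separate borderline argument is needed.
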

The proof of this theorem is given in Section~\ref{s:ProofConvRes}. As a consequence, we obtain directly:
\begin{corollary}\label{c:ConvMinElastConv} Assume \ref{a:ElastOmega}-\ref{a:ElastObjectiveFctl}. Let $\left(\varphi_\epsilon\right)_{\epsilon>0}$ be minimizers of $\left(j_\epsilon\right)_{\epsilon>0}$. Then there exists a subsequence, denoted by the same, and an element $\varphi_0\in L^1(\Omega)$ such that $\lim_{\epsilon\searrow0}\|\varphi_\epsilon-\varphi_0\|_{L^1(\Omega)}=0$. Besides, $\varphi_0$ is a minimizer of $j_0$ and it holds
	 $\lim_{\epsilon\searrow0}j_\epsilon\left(\varphi_\epsilon\right)=j_0\left(\varphi_0\right).$
\end{corollary}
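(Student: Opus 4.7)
The plan is to combine the $\Gamma$-convergence from Theorem~\ref{t:ElastConvGammaConv} with an equi-coercivity argument and then invoke the standard fundamental theorem of $\Gamma$-convergence for minimizers.

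Step one is a uniform upper bound on $j_\epsilon(\varphi_\epsilon)$. The constant function $\tilde\varphi\equiv-1$ lies in $\Phi_{ad}$ since $\beta\in(-1,1)$, satisfies $E_\epsilon(\tilde\varphi)=0$ because $\psi(-1)=0$ and $\nabla\tilde\varphi=0$, and has $\mathbb H(\b S(-1))$ finite by Lemma~\ref{l:SolOp} together with the growth bounds $\eqref{e:ElastPhaseObjFctGrowF}$ and $\eqref{e:ElastPhaseObjFctGrowG}$. Minimality of $\varphi_\epsilon$ therefore yields $j_\epsilon(\varphi_\epsilon)\leq j_\epsilon(\tilde\varphi)\leq C$ uniformly in $\epsilon$, and combined with the lower bound on $\mathbb H\circ\b S$ in~\ref{a:ElastObjectiveFctl} this forces $\sup_{\epsilon>0}E_\epsilon(\varphi_\epsilon)<\infty$.

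Step two, which is the only step I expect to require any real content, is equi-coercivity: I would invoke the classical Modica--Mortola compactness (cf.~\cite{modica,modica_mortola}). Since $|\varphi_\epsilon|\leq1$ a.e. and the Ginzburg--Landau energies are uniformly bounded, $(\varphi_\epsilon)_{\epsilon>0}$ is precompact in $L^1(\Omega)$ and every accumulation point lies in $BV(\Omega,\{\pm1\})$. Passing to a subsequence (denoted by the same index) one obtains $\varphi_\epsilon\to\varphi_0$ in $L^1(\Omega)$ with $\varphi_0\in BV(\Omega,\{\pm1\})$, and the volume inequality passes to the $L^1$-limit, so $\varphi_0\in\Phi_{ad}^0$.

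Step three is the usual fundamental theorem of $\Gamma$-convergence. For any competitor $\tilde\varphi\in L^1(\Omega)$, the limsup part of Theorem~\ref{t:ElastConvGammaConv} furnishes a recovery sequence $\tilde\varphi_\epsilon\to\tilde\varphi$ in $L^1(\Omega)$ with $\limsup_{\epsilon\searrow0}j_\epsilon(\tilde\varphi_\epsilon)\leq j_0(\tilde\varphi)$, and the liminf part gives $j_0(\varphi_0)\leq\liminf_{\epsilon\searrow0}j_\epsilon(\varphi_\epsilon)$. Chaining with the minimality of $\varphi_\epsilon$ one gets
\[j_0(\varphi_0)\leq\liminf_{\epsilon\searrow0}j_\epsilon(\varphi_\epsilon)\leq\limsup_{\epsilon\searrow0}j_\epsilon(\tilde\varphi_\epsilon)\leq j_0(\tilde\varphi)\]
for every $\tilde\varphi$, so $\varphi_0$ minimizes $j_0$; specializing $\tilde\varphi=\varphi_0$ then yields the convergence $\lim_{\epsilon\searrow0}j_\epsilon(\varphi_\epsilon)=j_0(\varphi_0)$.
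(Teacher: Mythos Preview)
Your proof is correct and follows essentially the same route as the paper: a uniform bound on the Ginzburg--Landau energies, Modica--Mortola compactness to extract an $L^1$-convergent subsequence, and then the standard fundamental theorem of $\Gamma$-convergence via Theorem~\ref{t:ElastConvGammaConv}. The only difference is that you spell out the upper bound $\sup_{\epsilon>0}j_\epsilon(\varphi_\epsilon)<\infty$ explicitly (using the competitor $\tilde\varphi\equiv-1$), whereas the paper simply asserts it and moves on.
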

\begin{proof}
From $\sup_{\epsilon>0}j_\epsilon(\varphi_\epsilon)<\infty$ we find $\sup_{\epsilon>0}\int_\Omega\left(\frac\epsilon2|\nabla\varphi_\epsilon|^2+\frac1\epsilon\psi(\varphi_\epsilon)\right)\dx<\infty$. We can apply the compactness argument \cite[Proposition 3]{modica} to find a subsequence of $(\varphi_\epsilon)_{\epsilon>0}$ converging in $L^1(\Omega)$ to some element $\varphi_0$ as $\epsilon\searrow0.$ Then the previous theorem and standard results for $\Gamma$-convergence, see for instance \cite{dalmaso}, yield the assertion.
\end{proof}

\begin{theorem}\label{t:ConvOPtSys} Assume \ref{a:ElastOmega}-\ref{a:ElastDiffAss}.
	Let $\left(\varphi_\epsilon\right)_{\epsilon>0}$ be minimizers of $\left(j_\epsilon\right)_{\epsilon>0}$. Then there exists a subsequence, which is denoted by the same, that converges in $L^1(\Omega)$ to a minimizer $\varphi_0$ of $j_0$. Moreover, it holds $\lim_{\epsilon\searrow0}\partial_t|_{t=0} j_\epsilon\left(\varphi_\epsilon\circ T_t^{-1}\right)=\partial_t|_{t=0}j_0\left(\varphi_0\circ T_t^{-1}\right)$ for all $T\in{\mathcal T}_{ad}$. If $\left|\left\{\varphi_0=1\right\}\right|>0$	then we additionally have the following convergence results:
	\begin{subequations}\begin{align}
	\label{e:ElastConvOptSysUEpsilonConv}	\b u_\epsilon \stackrel{\epsilon\searrow0}{\rightharpoonup}\b u_0, & \quad \dot{\b u}_\epsilon\left[V\right]\stackrel{\epsilon\searrow0}{\rightharpoonup} \dot{\b u}_0\left[V\right]  && \text{in }\b H^1(\Omega),\\	
		\lambda_\epsilon\stackrel{\epsilon\searrow0}{\longrightarrow} \lambda_0, & \quad j_\epsilon(\varphi_\epsilon)\stackrel{\epsilon\searrow0}{\longrightarrow}j_0(\varphi_0)&&\text{in }\R,
	\end{align}\end{subequations}
	where $\b u_\epsilon=\b S(\varphi_\epsilon)$, $\b u_0=\b S(\varphi_0)$ and $\left(\lambda_\epsilon\right)_{\epsilon>0}\subseteq\R^+_0$ are Lagrange multipliers for the integral constraint defined in Theorem~\ref{t:PhseOptCond}, $\lambda_0\in\R^+_0$ is a Lagrange multiplier for the integral constraint in the sharp interface setting since it fulfills $\eqref{e:ElastSharpOptCond1}$.\\
\end{theorem}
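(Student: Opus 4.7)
The first assertion—existence of a subsequence with $\varphi_\epsilon\to\varphi_0$ in $L^1(\Omega)$ to a minimizer $\varphi_0$ of $j_0$ together with $j_\epsilon(\varphi_\epsilon)\to j_0(\varphi_0)$—is immediate from Corollary~\ref{c:ConvMinElastConv}. I would work along this subsequence throughout, extracting further (non-relabelled) subsequences whenever needed.

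To upgrade to weak convergence of the elastic displacements, I would first use Lemma~\ref{l:SolOp} to obtain a uniform $\b H^1$-bound on $\b u_\epsilon:=\b S(\varphi_\epsilon)$ and pick a weak limit $\b u^\ast$. Since $|\varphi_\epsilon|\le 1$, dominated convergence turns the $L^1$-convergence $\varphi_\epsilon\to\varphi_0$ into convergence in every $L^p(\Omega)$, $p<\infty$, and since $\C,\overline\Epsilon\in C^{1,1}([-1,1])$ the compositions $\C(\varphi_\epsilon)$ and $\overline\Epsilon(\varphi_\epsilon)$ also converge in every $L^p$. Passing to the limit in~\eqref{e:ElastPhaseStateWeak} identifies $\b u^\ast=\b S(\varphi_0)=\b u_0$, so the whole chosen subsequence satisfies $\b u_\epsilon\rightharpoonup\b u_0$ in $\b H^1(\Omega)$. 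The same scheme applied to~\eqref{e:ElastPhaseDotUEpsilonEquationWeak}, after using Korn's inequality to derive a uniform $\b H^1$-estimate for $\dot{\b u}_\epsilon[V]$ from the already-controlled $\b u_\epsilon$, yields $\dot{\b u}_\epsilon[V]\rightharpoonup\dot{\b u}_0[V]$ in $\b H^1(\Omega)$.

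The key step is the convergence of the derivative formula~\eqref{e:ElastPhaseBoundaryVariationJEpsilonDifTerm}. Its $\mathbb H$-part is given explicitly by~\eqref{e:ElastSharpPartTJ0E2} (with $\varphi_0,\b u_0,\dot{\b u}_0[V]$ replaced by $\varphi_\epsilon,\b u_\epsilon,\dot{\b u}_\epsilon[V]$); by Remark~\ref{r:ElastObjFctFrechet} and the continuity of $h_\Omega,h_\Gamma$, every summand passes to the limit using the just-established convergences, yielding $\partial_t|_{t=0}\mathbb H(\b S(\varphi_0\circ T_t^{-1}))$ as in~\eqref{e:ElastSharpPartTJ0E2}. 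The remaining Ginzburg--Landau contribution is the real obstacle. Combining the already-proved convergences with $j_\epsilon(\varphi_\epsilon)\to j_0(\varphi_0)$ yields $\gamma E_\epsilon(\varphi_\epsilon)\to\gamma c_0 P_\Omega(\{\varphi_0=1\})$, so $(\varphi_\epsilon)$ is a recovery sequence for Modica--Mortola. Classical equipartition / Reshetnyak-type results (Luckhaus--Modica) then produce the matrix-valued measure convergence
\begin{equation*}
\epsilon\,\nabla\varphi_\epsilon\otimes\nabla\varphi_\epsilon\dx \rightharpoonup c_0\,\nu\otimes\nu\,|\der\chi_{E_0}|,\qquad E_0:=\{\varphi_0=1\},
\end{equation*}
and analogously $\bigl(\tfrac{\epsilon}{2}|\nabla\varphi_\epsilon|^2+\tfrac{1}{\epsilon}\psi(\varphi_\epsilon)\bigr)\dx\rightharpoonup c_0\,|\der\chi_{E_0}|$. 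Testing these against the continuous fields $\div V(0)$ and $\nabla V(0)$ converts~\eqref{e:ElastPhaseBoundaryVariationJEpsilonDifTerm} precisely into~\eqref{e:ElastSharpPartTJ0E}.

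To conclude with the Lagrange multipliers, observe that $|\{\varphi_0=1\}|>0$ together with $\int_\Omega\varphi_0\dx\leq\beta|\Omega|$ and $\beta<1$ forces $0<|E_0|<|\Omega|$ and hence $P_\Omega(E_0)>0$. Since $D\varphi_0=-2\nu|\der\chi_{E_0}|$ is a non-trivial $\R^d$-valued measure, I can pick a compactly supported smooth $V\in\mathcal V_{ad}$ with $\int_\Omega\varphi_0\div V(0)\dx\neq 0$. The $L^1$-convergence then gives $\int_\Omega\varphi_\epsilon\div V(0)\dx\to\int_\Omega\varphi_0\div V(0)\dx\neq 0$, and solving for $\lambda_\epsilon$ in~\eqref{e:ElastPhaseOptCond1} produces a limit $\lambda_\epsilon\to\lambda_0\geq 0$. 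Passing to the limit in~\eqref{e:ElastPhaseOptCond1} for an arbitrary $V\in\mathcal V_{ad}$ then delivers~\eqref{e:ElastSharpOptCond1} with this same $\lambda_0$; the complementarity $\lambda_0(\int_\Omega\varphi_0\dx-\beta|\Omega|)=0$ follows because, when $\int\varphi_0<\beta|\Omega|$ strictly, $L^1$-convergence keeps $\lambda_\epsilon=0$ for all small $\epsilon$. The principal technical hurdle, as noted, is the matrix-measure convergence of $\epsilon\,\nabla\varphi_\epsilon\otimes\nabla\varphi_\epsilon$, which crucially relies on the recovery-sequence structure inherited from Corollary~\ref{c:ConvMinElastConv}.
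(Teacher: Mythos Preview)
Your proposal is correct and follows essentially the same route as the paper: Corollary~\ref{c:ConvMinElastConv} for the subsequence and energy convergence, uniform $\b H^1$-bounds plus weak compactness for $\b u_\epsilon$ and $\dot{\b u}_\epsilon[V]$, continuity of the $\mathbb H$-part via Remark~\ref{r:ElastObjFctFrechet}, and a Reshetnyak/Luckhaus--Modica argument for the Ginzburg--Landau term exploiting that $(\varphi_\epsilon)$ is a recovery sequence. The only cosmetic differences are that the paper packages the passage to the limit in the state equation into the demicontinuity statement of Lemma~\ref{l:ElastConvGammaConvFEcont} rather than redoing it, and that you spell out the complementarity $\lambda_0(\int_\Omega\varphi_0\dx-\beta|\Omega|)=0$ explicitly, which the paper leaves implicit.
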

The proof is given in Section \ref{s:ProofConvRes}.

\section{Numerical experiments}\label{s:Numerics}
We want to verify the reliability and practical relevance of the phase field approximation by means of numerical experiments. Besides, we also study the behaviour for decreasing phase field parameters $\epsilon>0$ and see that the convergence results stated in the previous section are also indicated by numerics. For the numerics, the admissible design functions are chosen in
$\widetilde{\Phi}_{ad}:=\{\varphi\in H^1(\Omega)\mid \int_\Omega\varphi\dx=\beta\left|\Omega\right|,\,|\varphi|\leq 1\text{ a.e. in }\Omega\}$ instead of $\varphi\in\Phi_{ad}$. Thus, the integral volume constraint is replaced by an equality constraint, which means that we prescribe the exact volume fraction in advance. As already discussed in Remark \ref{r:EqualityConstraint}, this does not change the analytical results presented in the previous section. The only difference is, that the Lagrange multipliers $\lambda_\epsilon\in\R$ are also allowed to be negative and the complementarity conditions are fulfilled trivially.

\subsection{Description of algorithm}On the reduced problem formulation we apply an extension of the projected gradient method without requiring the existence of a gradient. In addition we allow for a variable scalar scaling $\zeta_k>0$ of the derivative,  as well as the use of a variable metric, which can include second order information. The step length is determined by Armijo backtracking. For more details, see \cite{Buchkapitel, RupprechtBlank}.

\begin{algo}\label{algorithm1}
\quad 
\begin{algorithmic}[1]
\STATE Choose $0<\beta < 1$, $0<\sigma<1$ and $\bfphi_0 \in\widetilde\Phi_{ad}$; set $k := 0$.
\WHILE{$k \leq k_{\textrm{max}}$}
	\STATE Choose $\zeta_k>0$ and an inner product $a_k$.
	\STATE Calculate the minimum $\overline\varphi_k=\mathcal P_k(\bfphi_k)$ of the projection-type subproblem
\begin{align}
	\min_{y  \in\widetilde{\Phi}_{ad}} \quad&\frac{1}{2}\| y -\bfphi_k\| _{a_k}^2 
+ \zeta_k j_\varepsilon'(\bfphi_k)(y -\bfphi_k).
\label{eq:projproblvm}
\end{align}
	\STATE Set the search direction $\bfv_k := \overline\varphi_k - \bfphi_k$
	\IF{$\sqrt{\varepsilon\gamma}\|\nabla\bfv_k\|_{L^2} < \textrm{tol}$ }\label{a:StopCrit}
		\RETURN
	\ENDIF	
	\STATE Determine the step length $\alpha_k:= \beta^{m_k}$ with minimal $m_k\in\N_0$ such that\\ $j_\varepsilon(\bfphi_k + \alpha_k v_k) \leq j_\varepsilon(\bfphi_k) + \alpha_k \sigma \spr{j_\varepsilon'(\bfphi_k),v_k}.$
	\STATE Update $\bfphi_{k+1} := \bfphi_k + \alpha_k\bfv_k$,
	\STATE $k:=k+1$,
\ENDWHILE
\end{algorithmic}
\end{algo}

The maximal number of iterations $k_{\textrm{max}}$ is set to $10^{5}$ in the experiments below. The directional derivative $j'_\epsilon(\varphi_k)v$ is given in $\eqref{e:ReducedObjFctlDirDer}$. We start with small $\zeta_k$ to enhance the convergence of the PDAS method and increase it slowly in every step until $\zeta_k=1$. As an inner product $a_k$ we start choosing $a_k(\varphi_1,\varphi_2) = \varepsilon\gamma\int_\Omega \nabla \varphi_1\cdot\nabla \varphi_2\dx$. To get faster convergence, we update the inner product $a_k$ in every step by a BFGS update whenever possible. Note that the solution $\overline\varphi_k$ to the projection-type subproblem \eqref{eq:projproblvm} is formally given by $\overline\varphi_k = P_{a_k}(\bfphi_k-\zeta_k \nabla_{a_k}j_\varepsilon(\bfphi_k))$, where $P_{a_k}$ denotes the orthogonal projection onto $\widetilde{\Phi}_{ad}$ with respect to the inner product $a_k$ and $\nabla_{a_k}j_\varepsilon(\bfphi_k)$ denotes the Riesz representative of $j_\varepsilon'(\bfphi_k)$ with respect to the inner product $a_k$. This is only formally since $j_\varepsilon$ need not be differentiable with respect to the norm induced by $a_k$. The subproblem \eqref{eq:projproblvm} is solved by a primal dual active set (PDAS) method, see \cite{bgsssv2010, ItoKunischPDAS}. 

\begin{remark}
In \cite{RupprechtBlank} it is shown that every accumulation point of the sequence $(\varphi_k)_{k\in\N}$ in the $H^1(\Omega)\cap L^\infty(\Omega)$ topology is a stationary point of $j_\varepsilon$ and that \linebreak[4]$\lim_{k\to\infty}\|v_k\|_{H^1(\Omega)}=0$. Additionally mesh independence can be observed.
\end{remark}

For all of our numerical experiments we consider the compliance problem, see Example~\ref{ex:MeanComp}, and assume to have only the external surface load $\b g$ as well as no eigenstrain, hence $\b f\equiv\b 0$ and $\overline\Epsilon\equiv0$. Thus we choose $h_\Omega(x,\b u)=0$, $h_\Gamma(x,\b u)=\b g\cdot\b u$. In the following examples $\b g$ will always be an element in $\b L^\infty(\Gamma_g)$ and so Assumption \ref{a:ElastObjectiveFctl} is fulfilled. Thus the objective functional is given in the following numerical experiments by

$$j_\epsilon(\varphi)=\int_{\Gamma_g}\b g\cdot\b S(\varphi)\ds+\gamma\int_\Omega\frac\epsilon2\left|\nabla\varphi\right|^2+\frac{1}{2\epsilon}\left(1-\varphi^2\right)\dx.$$
The used elasticity tensor interpolates between its two values quadratically. Inside the first material (represented by $\varphi=1$) and the second material (represented by $\varphi=-1$) it is given by the two Lam\'e constants $\lambda_1,\mu_1$ and $\lambda_2,\mu_2$, respectively. We choose $\C(\varphi)\Epsilon:=2\iota_{\delta_\mu}(\varphi)\mu_2\Epsilon+\iota_{\delta_\lambda}(\varphi)\lambda_2\mathrm{tr}(\Epsilon)I$ where $\iota_\delta(\varphi)=0.25\left(1-\delta\right)\varphi^2-0.5\left(1-\delta\right)\varphi+0.25(1-\delta)+\delta$, thus $\iota_\delta(1)=1$ and $\iota_\delta(-1)=\delta$, and $\delta_{\mu}:=\nicefrac{\mu_1}{\mu_2}$, $\delta_\lambda:=\nicefrac{\lambda_1}{\lambda_2}$.\\

The phase field $\varphi$ and the state equation are discretized using standard piecewise linear finite elements. An adaptive mesh is implemented, which is fine on the interface and coarse in the bulk region as described in \cite{BNS04}. All appearing integrals are computed by exact quadrature rules.

\subsection{Optimal design of a cantilever beam}\label{s:Cantilever}
The first experiment is carried out in the design domain $\Omega=(-1,1)\times(0,1)$ and we choose $\Gamma_D:=\{(x,y)\in\partial\Omega\mid x=-1\}$ and the support of the force $\b g$ will be concentrated on $\Gamma_g^0:=\{(x,y)\in\partial\Omega\mid x\geq 0.75, y = 0\}\subset\Gamma_g$. The surface load is chosen to be $\b g=\left(0,-250\right)^T\chi_{\Gamma_g^0}$. The configuration is sketched in Figure \ref{fig:Config1} and the initial shape always corresponds to $\varphi\equiv 0$. Thus, also $\beta=0$ is chosen for the volume constraint. The first material, represented by $\varphi=1$ is given by the constants $\lambda_1=\mu_1=5000$ and the second material (i.e. $\varphi=-1$) is a more elastic material with $\lambda_2=\mu_2=10$. Lam\'e constants with such a pronounced contrast also appear in applications, compare \cite{sale2013structural}. A uniform mesh size of $h=2^{-6}$ is chosen on the bulk and the interfacial layer is refined such that there are 8 mesh points across the interface. Since the interface thickness is proportional to $\varepsilon$, the mesh has to be chosen finer on the interface as $\varepsilon$ gets smaller.

\begin{figure}
  \centering
		\subfigure[Configuration of the cantilever beam\label{fig:Config1}]{%
{\epsfig{file=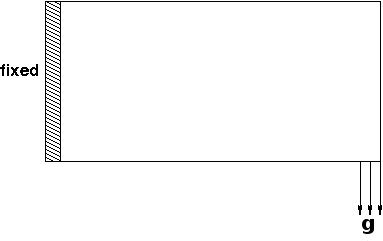,width=0.48\textwidth}}%
}\quad
\subfigure[Optimal configuration for $\gamma=0.002$. The stiff material is blue.\label{fig:Config2}]{%
{\epsfig{file=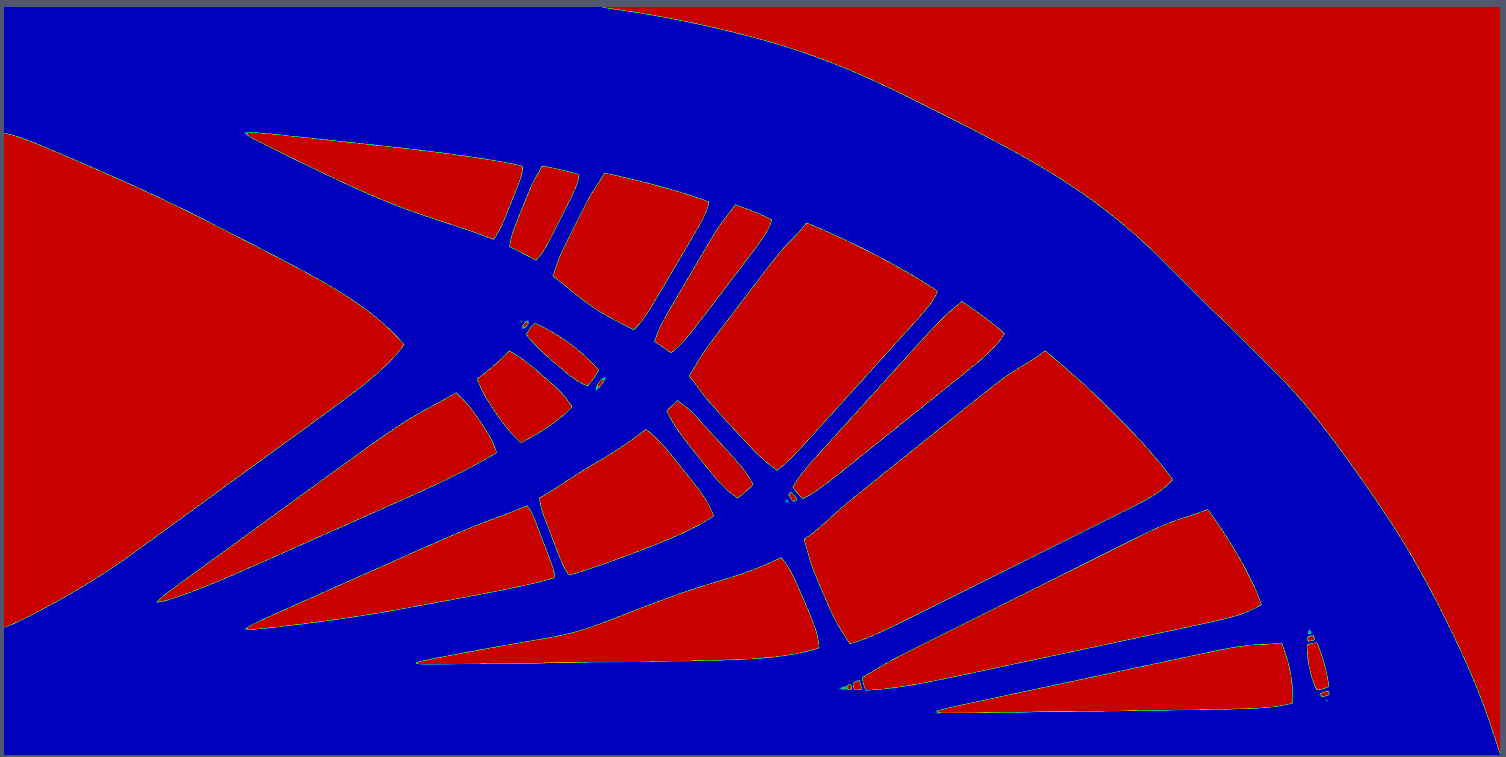,width=0.48\textwidth}}%
}  
	\caption{Cantilever beam}
 \label{fig:Config}
\end{figure}

\begin{figure}
  \centering
	\subfigure[$\epsilon=0.06$]{%
{\epsfig{file=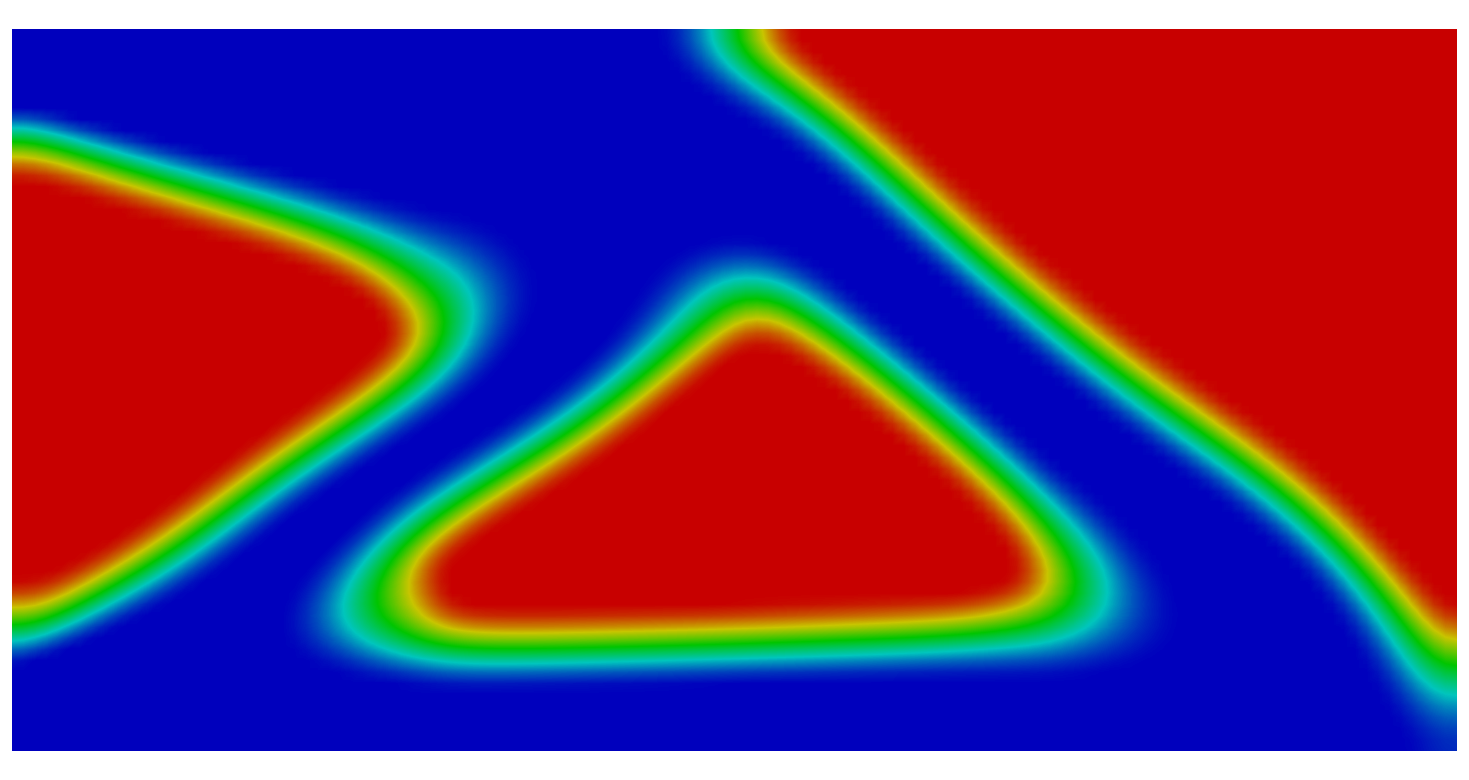,width=0.3\textwidth}}%
}\quad
\subfigure[$\epsilon=0.015$]{%
{\epsfig{file=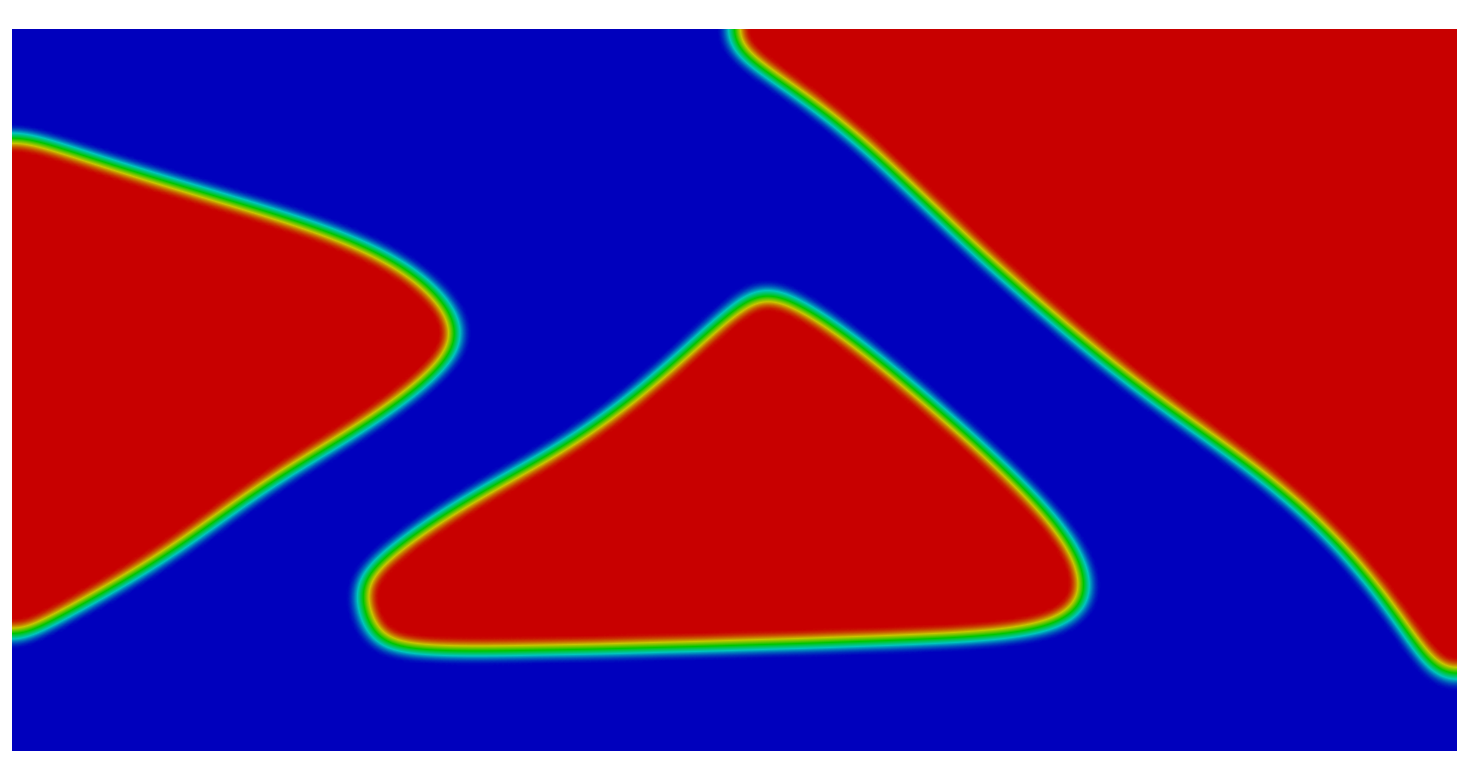,width=0.3\textwidth}}}\quad
\subfigure[$\epsilon=0.001875$]{%
{\epsfig{file=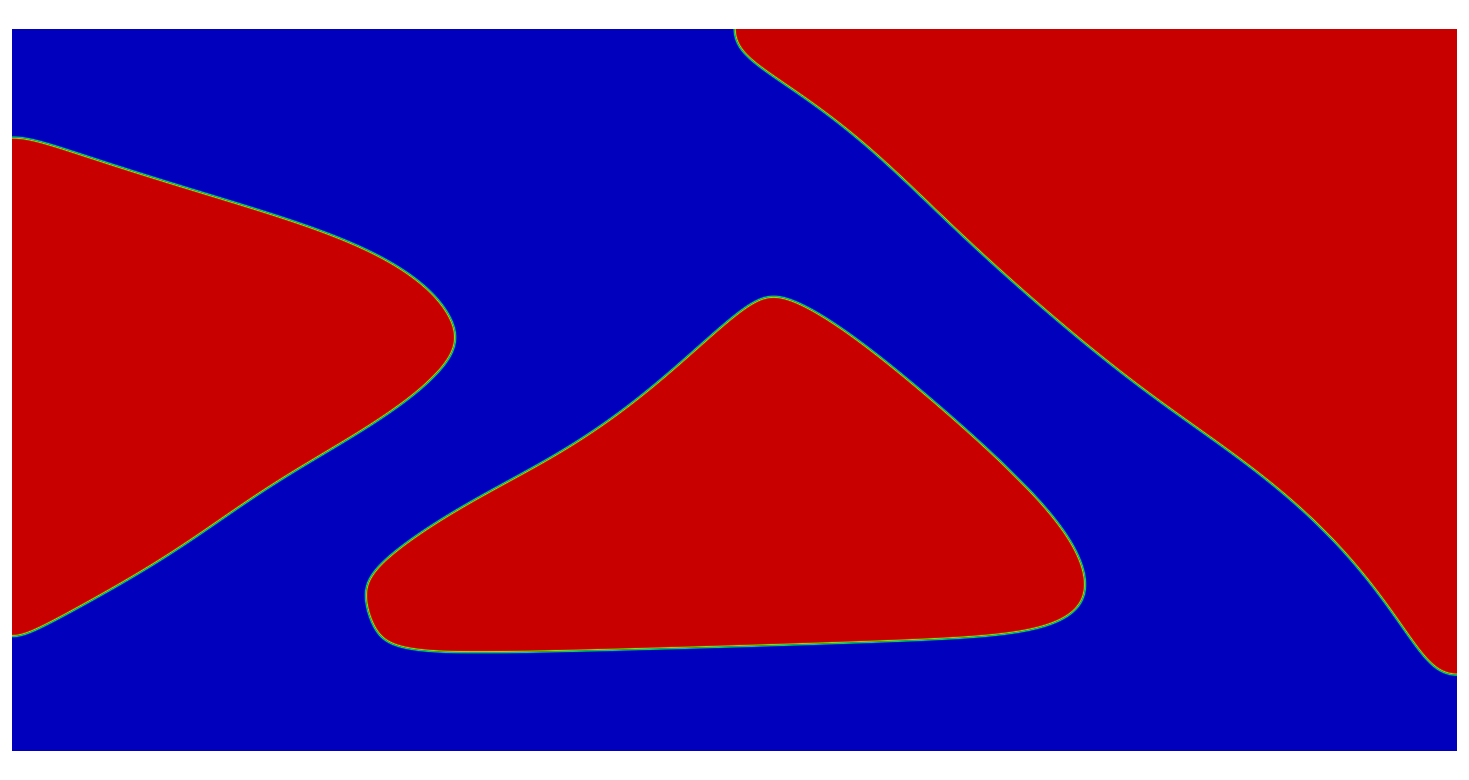,width=0.3\textwidth}}}%
	\caption{Optimal material configuration $\varphi_\epsilon$ for the cantilever beam for $\gamma=0.5$ and different values of $\epsilon$ (stiff material in blue, weak material in red and interface in green).}
  \label{fig:DiffEps}
\end{figure}

\begin{figure}
\def\tabularxcolumn#1{m{#1}}
\begin{tabularx}{\linewidth}{@{}cX@{}}
\subfigure[The solid blue line shows $\|\varphi_\epsilon-\tilde\varphi_0\|_{L^1(\Omega)}$. The approximated diffuse interface error is depicted in the red, dashed line.\label{fig:ErrorEps1}]{%
{\epsfig{file=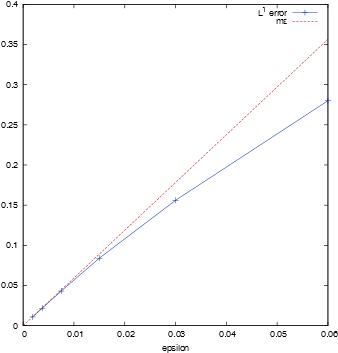,width=0.46\textwidth}}%
}
&\vspace*{-4cm}
\begin{tabular}{c}
\subfigure[The cost functional $j_\epsilon(\varphi_\epsilon)$ seems to converge as $\epsilon\searrow0$.\label{fig:ErrorEps2}]{%
{\epsfig{file=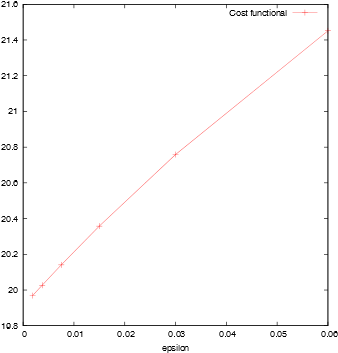,width=0.46\textwidth,height=100pt}} %
}\\
\subfigure[The Lagrange multiplier $\lambda_\epsilon$ seems to converge as $\epsilon\searrow0$.\label{fig:ErrorEps3}]{%
{\epsfig{file=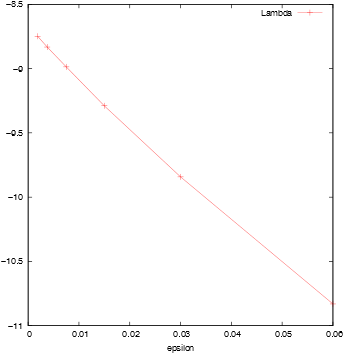,width=0.46\textwidth,height=100pt}} %
}
\end{tabular}
\end{tabularx}
\caption{Data corresponding to Figure \ref{fig:DiffEps}.}
\end{figure}

%
%
%

Results for minimizing the mean compliance of a cantilever beam for three different phase field parameters $\epsilon$ are shown in Figure \ref{fig:DiffEps}. Here we chose the weighting factor for the Ginzburg Landau energy $\gamma=0.5$. Already for $\epsilon=0.06$ one obtains the same structure as for the smallest value of the phase field parameter, i.e. the right qualitative behaviour of the sharp interface minimizer. We also remark that for the smallest value of $\epsilon$, the interface is already very thin and almost not visible anymore.\\
We want to study the convergence of the minimizers $(\varphi_\epsilon)_\epsilon$. For this purpose we denote by $\varphi_0$ a minimizer for the sharp interface problem. As this minimizer is a priori unknown, we approximate $\varphi_0$ by $\tilde\varphi_0:=2\chi_{\{\varphi_{\epsilon_{\min}}>0\}}-1$, where $\epsilon_{\min}:=0.001875$. Thus the optimal interface $\Gamma_0$ is approximated by the zero level set of $\varphi_{\epsilon_{\min}}$, denoted by $\tilde\Gamma_0$. The difference $\|\varphi_\epsilon-\tilde\varphi_0\|_{L^1(\Omega)}$ can be separated into two terms,
\begin{align}\label{e:NumericErrorDecompose}\|\varphi_\epsilon-\tilde\varphi_0\|_{L^1(\Omega)}\sim\|\varphi_\epsilon-\varphi_\epsilon^o\|_{L^1(\Omega)}+\|\varphi_\epsilon^o-\tilde\varphi_0\|_{L^1(\Omega)}\end{align}
where $(\varphi_\epsilon^o)_\epsilon$ denotes the constructed recovery sequence corresponding to $\tilde\varphi_0$ and exposes a $\sin$-profile normal to $\tilde\Gamma_0$, see \cite{blowey_elliot}. We want to determine which term of the error in $\eqref{e:NumericErrorDecompose}$ is the dominating part. We notice that the first term on the right-hand side of $\eqref{e:NumericErrorDecompose}$ can be considered as the distance of the zero level sets of $\varphi_\epsilon$ and $\varphi_{\epsilon_{\min}}$ and the second term describes the error resulting from the diffuse interface profile. The 1D error of the $\sin$-profile compared to a characteristic function is given by $\int_{-\nicefrac{\epsilon\pi}{2}}^{\nicefrac{\epsilon\pi}{2}}|\sin(\frac x\epsilon)-\mathrm{sgn}(x)|\dx=(\pi-2)\epsilon$. Thus, the $L^1(\Omega)$-error in our $2D$ setting can be approximated by $\|\varphi_\epsilon^o-\tilde\varphi_0\|_{L^1(\Omega)}\approx P_\Omega(\{\varphi_0=1\})(\pi-2)\epsilon$. As the perimeter $P_\Omega(\{\varphi_0=1\})$ is not known, we extrapolate the given sequence $(E_\epsilon(\varphi_\epsilon))_\epsilon$ numerically to $\epsilon=0$, which gives $\lim_{\epsilon\searrow0} E_\epsilon(\varphi_\epsilon)\approx e_0:=8.1754$. As mentioned above, from the $\Gamma$-convergence result of \cite{blowey_elliot, modica} it is expected that $e_0\approx \frac\pi2 P_\Omega(\{\varphi_0=1\})$. Hence, we may approximate $\|\varphi_\epsilon^o-\tilde\varphi_0\|_{L^1(\Omega)}\approx m\epsilon$ with $m:=\frac{2(\pi-2)}{\pi}e_0$.\\
In Figure \ref{fig:ErrorEps1} we depict now the difference $\|\varphi_\epsilon-\tilde\varphi_0\|_{L^1(\Omega)}$ (solid blue line) together with the approximated diffuse interface error $m\epsilon$ (dashed red line). We see that for $\epsilon<0.02$ the $L^1$-difference of the minimizer $\varphi_\epsilon$ and the approximated minimizer $\tilde\varphi_0$ becomes tangential to the line $m\epsilon$. This indicates that the error resulting from the diffuse interface profile dominates the total approximated error in $\eqref{e:NumericErrorDecompose}$ and that the distance of the zero level sets becomes comparably small. Hence, the level set of $\varphi_\epsilon$ is already for $\epsilon<0.02$ a good approximation of the optimal interface $\Gamma_0$.\\

To complete this picture we also give a plot of the minimal functional values $j_\epsilon(\varphi_\epsilon)$ and the Lagrange multipliers $\lambda_\epsilon$ for the calculated $\epsilon$ values in Figure \ref{fig:ErrorEps2} and \ref{fig:ErrorEps3}. One sees that $(j_\epsilon(\varphi_\epsilon))_\epsilon$ is monotonically decreasing as $\epsilon$ decreases and seems to converge linearly to a specific value, supposedly a minimal value for $j_0$, compare also Theorem \ref{c:ConvMinElastConv}. Likewise, $(\lambda_\epsilon)_\epsilon$ converges linearly to a limit value $\lambda_0$, compare Theorem \ref{t:ConvOPtSys}.\\
To study the influence of the perimeter penalization, we carried out the same calculations for a smaller weighting factor $\gamma$. We can control the appearance of fine structures by the choice of $\gamma$. As an example, we refer to Figure \ref{fig:Config2}, where we used the parameters $\gamma=0.002$, $\epsilon=0.001$. This verifies numerically that the regularization yields a well-posed problem but still gives desired optimal structures. Moreover, the influence of regularization parameters on the fineness of the structure is in accordance to other methods (see e.g. \cite{bendsoe2003topology}). For the computation we chose the inner product
\begin{align*}
	a_k(\varphi_1, \varphi_2) := \gamma\varepsilon \int_\Omega\nabla \varphi_1\cdot\nabla \varphi_2\dx - 2 \int_\Omega \C'(\varphi_k)(\varphi_1) \Epsilon(\b z):\Epsilon(\b u_k) \dx,
\end{align*}
which depends on the current iterate $\varphi_k$ and which includes second order information of the Ginzburg-Landau energy, as well as of the compliance part. Here, we used $\b u_k:=\b S(\varphi_k)$ and $\b z:=\b S'(\varphi_k)\varphi_2\in \b H^1_D\left(\Omega\right)$ which is given as
the solution of the linearized state equation
\begin{align*}
	\int_\Omega \C(\varphi_k) \Epsilon(\b z):\Epsilon(\b v)\dx = -\int_\Omega \C'(\varphi_k)\varphi_2\Epsilon(\b u_k):\Epsilon(\b v)\dx + \int_{\Gamma_g}\b g\cdot \b v\ds\quad \forall\b v\in \b H^1_D\left(\Omega\right).
\end{align*}

\subsection{Optimal material distribution within a wing}\label{s:Wing}
We now consider a different geometry for the overall container $\Omega$ in a three dimensional setting. This example shows that we can also use different geometries, i.e. different choices of $\Omega$, and work in a three dimensional setting.\\
We perform the same optimization strategy as above and optimize the material configuration within a wing of an airplane. This example is to be considered as an outlook on possible applications. One wants to have a composite material in order to obtain a high ratio of stiffness to weight. Hence we use one very stiff material ($\lambda_1=5000, \mu_1=5000$) and a material representing the light material ($\lambda_2=100$, $\mu_2=100$). As geometry we use a three dimensional NACA 0018 airfoil configuration with three holes in it. The configuration can be seen in Figure \ref{fig:Wing}.\\
The considerations of the previous example have shown that we do not have to choose $\epsilon$ too small in order to obtain the right qualitative behaviour and so we use here $\epsilon=0.06$. Moreover, the weighting factor for the Ginzburg-Landau regularization is chosen quite small, i.e. $\gamma=10^{-4}$. The boundary force $\b g(x,y,z)=\left(0,0.03\sqrt{1-\left(0.5y\right)^2},0\right)$ is of elliptic form, which is typical for the lift force acting on an airplane wing, compare for instance \cite{dorand1922influence}. Its support is on $\Gamma_g^0:=\{(x,y,z)\in\partial\Omega\mid y>0\}$. The Dirichlet boundary $\Gamma_D=\{(x,y,z)\in\partial\Omega\mid z=0\}$ is the part where the wing is attached to the airplane (left-hand side in Figure \ref{fig:Wing}). The optimized material configuration is shown in Figure \ref{fig:Wing}, where the blue material is the stiff material. We also give a picture of the weak material, i.e. the set $\{\varphi_\epsilon<0\}$, see Figure \ref{fig:Wing}, in order to see the hypersurface separating the two materials, together with various cross sections of the wing.

\begin{figure}
\centering
	\subfigure{%
{\epsfig{file=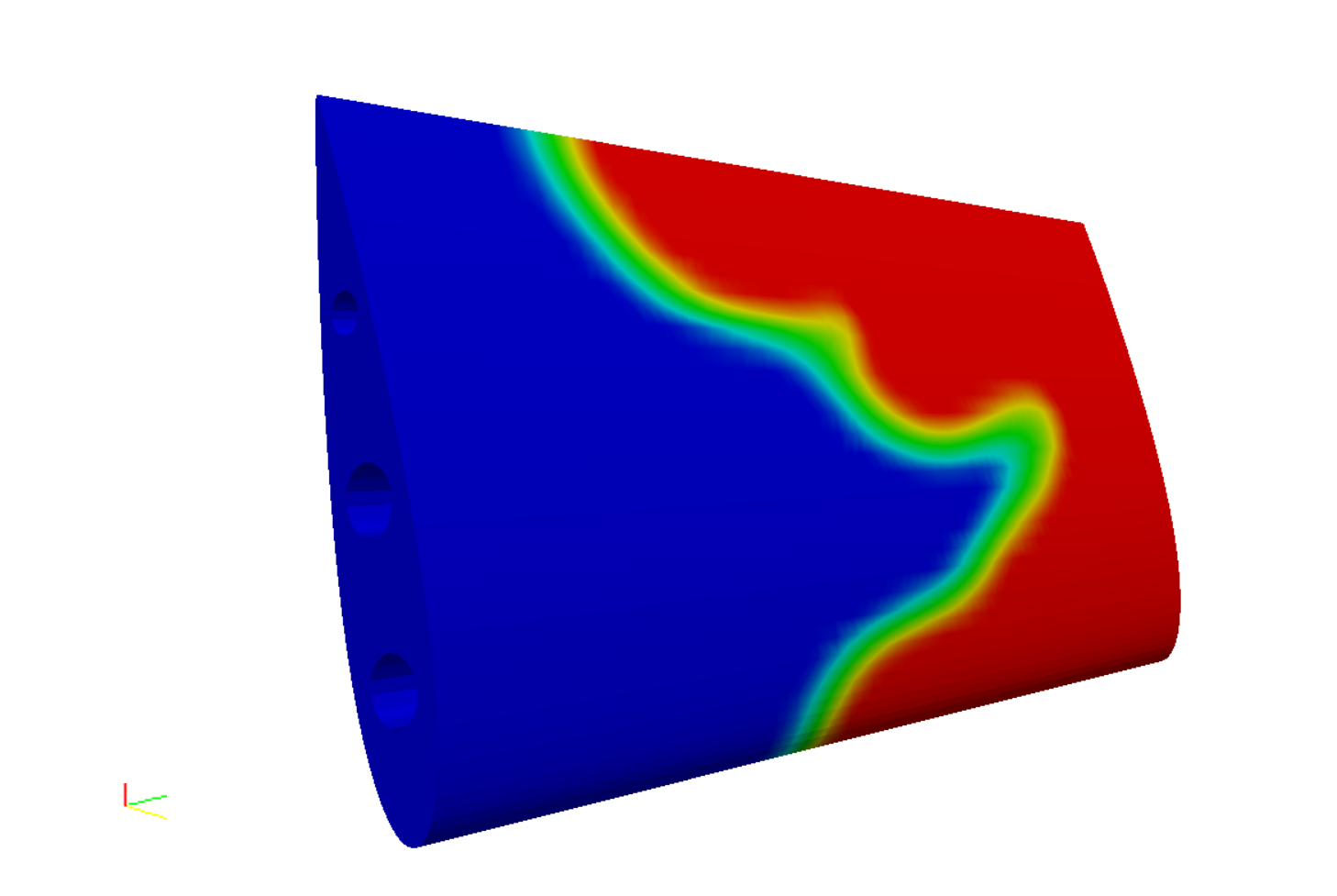,width=0.42\textwidth}}%
\label{fig:Wing1}} 
	\subfigure{%
{\epsfig{file=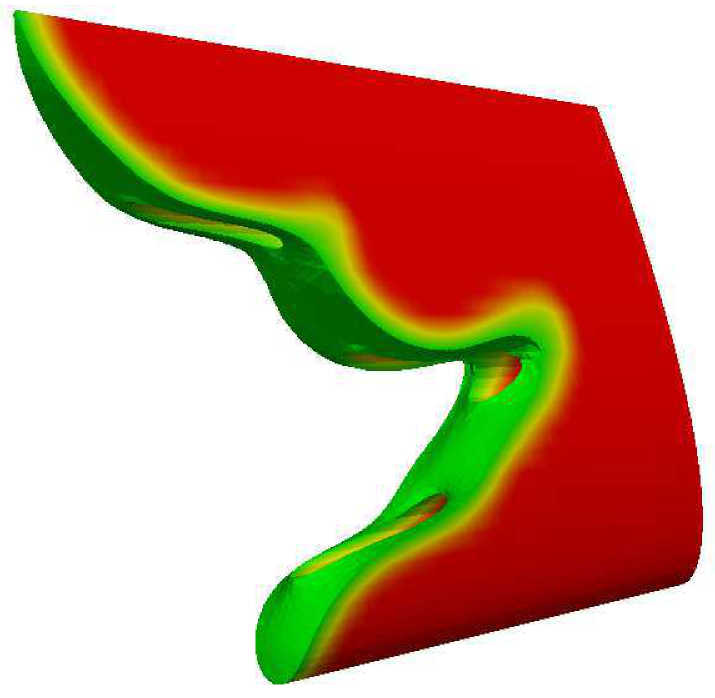,width=0.19\textwidth}} \label{fig:Wing2}%
}	\subfigure{%
{\epsfig{file=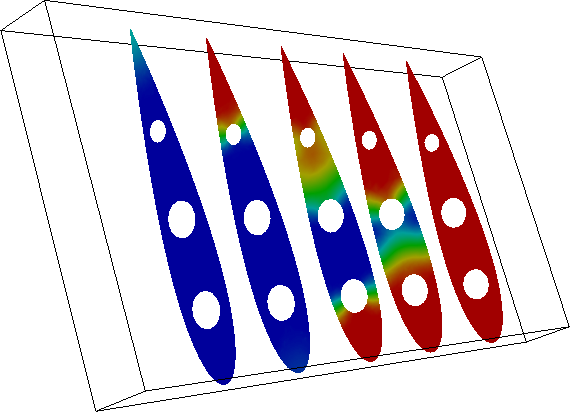,width=0.3\textwidth}} \label{fig:Wing3}%
}	
\caption{Optimal material configuration of a NACA 0018 airfoil wing with 3 holes where blue represents the stiff material.}
  \label{fig:Wing}
\end{figure}

\subsection{Influence of eigenstrain}
\begin{figure}
\centering
	\subfigure{%
{\epsfig{file=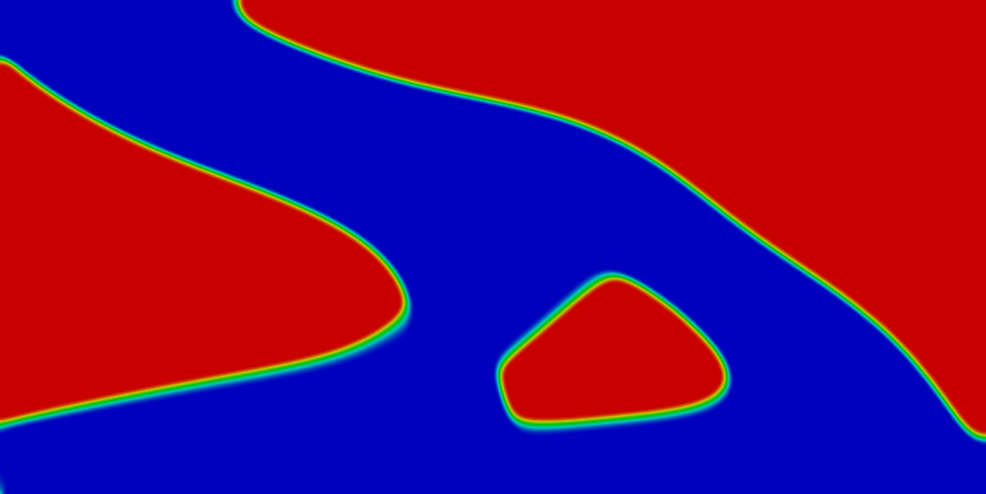,width=0.42\textwidth}}%
}\hspace{1cm}
	\subfigure{%
{\epsfig{file=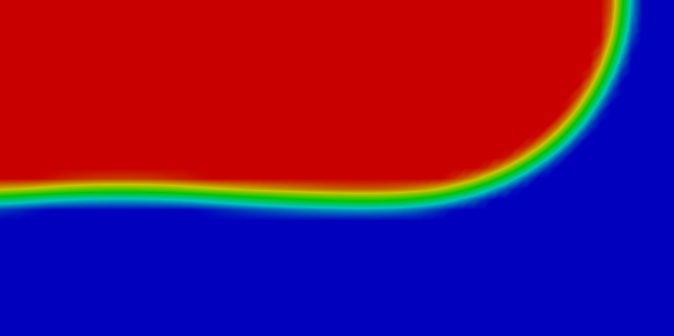,width=0.42\textwidth}}
}	
\caption{Optimal designs including eigenstrain.}\label{fig:Eigenstrain}
\end{figure}

Left hand side of Figure \ref{fig:Eigenstrain}: We take $\overline\Epsilon(\varphi) = \delta\varphi I$ with $\delta = 0.08$, $\varepsilon=0.01$ and $\gamma=0.5$ and $\lambda_1=\mu_1=5000$ and $\lambda_2=\mu_2=5$ (strong and weak material).
Right hand side of Figure \ref{fig:Eigenstrain}: We take $\overline\Epsilon(\varphi) = \delta\varphi \begin{pmatrix}-1&0\\0&1\end{pmatrix}$ with $\delta = 0.01$, $\varepsilon=0.04$ and $\gamma=0.5$ and $\lambda_1=\mu_1=\lambda_2=\mu_2=5000$ (homogeneous material). The rest of the parameters are the same as in Section \ref{s:Cantilever}.

\section{Derivation of the optimality conditions}\label{s:ProofOptcond}
In this section we will give the proofs of Theorem \ref{t:ElastGeomVarSharp} and Theorem \ref{t:PhseOptCond}. We start with showing the differentiability of $t\mapsto\left(\b S\left(\varphi\circ T_t^{-1}\right)\circ T_t\right)$ at $t=0$ for $T\in\mathcal T_{ad}$ and $\varphi\in L^1(\Omega)$, $|\varphi|\leq1$ and deriving the validity of $\eqref{e:ElastPhaseDotUEpsilonEquationWeak}$:

\begin{lemma}\label{l:DerOptCondLem}Assume \ref{a:ElastOmega}-\ref{a:ElastDiffAss}.	Let $\varphi\in L^1(\Omega)$ with $|\varphi|\leq1$ a.e. in $\Omega$ and $T\in\mathcal T_{ad}$ chosen. We define $\varphi(t):=\varphi\circ T_t^{-1}$ and $\b u(t):=\b S(\varphi(t))$ for $|t|\ll1$. Then $t\mapsto(\b u(t)\circ T_t)\in\b H^1_D(\Omega)$ is differentiable at $t=0$ and $\dot{\b u}[V]:=\partial_t|_{t=0}\left(\b u(t)\circ T_t\right)\in\b H^1_D(\Omega)$ is the unique solution of $\eqref{e:ElastPhaseDotUEpsilonEquationWeak}$ with $\varphi_0$ replaced by $\varphi$ and $\b u_0$ replaced by $\b u:=\b S(\varphi)$.
\end{lemma}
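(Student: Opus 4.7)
The plan is to pull the state equation for $\b u(t) = \b S(\varphi(t))$ back to the reference configuration $\Omega$ via $T_t$, apply the implicit function theorem to obtain a $C^1$ branch of solutions $t \mapsto \b u^t := \b u(t) \circ T_t$, and identify $\dot{\b u}[V]$ by differentiating the pulled-back variational equation at $t=0$. Concretely, for any $\b w \in \b H^1_D(\Omega)$, the function $\b v := \b w \circ T_t^{-1}$ is an admissible test function in the state equation for $\b u(t)$ (since $T_t$ is a bijection of $\overline\Omega$, restricts to the identity on $\Gamma_D$ by \textbf{(V3)}, and preserves $\Gamma_g$ by \textbf{(V2)}). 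Substituting and changing variables $y = T_t^{-1}(x)$, and using $\varphi(t) \circ T_t = \varphi$, we obtain the equivalent pulled-back equation $a_t(\b u^t, \b w) = \ell_t(\b w)$ for all $\b w \in \b H^1_D(\Omega)$, where, with $A_t(\b z) := \tfrac{1}{2}\bigl(\nabla \b z (\nabla T_t)^{-1} + (\nabla T_t)^{-T} (\nabla \b z)^T\bigr)$,
\begin{equation*}
a_t(\b z, \b w) := \int_\Omega \C(\varphi)\bigl(A_t(\b z) - \overline\Epsilon(\varphi)\bigr) : A_t(\b w) \, \det(\nabla T_t) \dx,
\end{equation*}
\begin{equation*}
\ell_t(\b w) := \int_\Omega \b f \cdot (\b w \circ T_t^{-1}) \dx + \int_{\Gamma_g} \b g \cdot (\b w \circ T_t^{-1}) \ds.
\end{equation*}

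Next I would define $F : (-\tilde\tau, \tilde\tau) \times \b H^1_D(\Omega) \to (\b H^1_D(\Omega))^*$ by $\langle F(t, \b z), \b w\rangle := a_t(\b z, \b w) - \ell_t(\b w)$, so that $F(0, \b S(\varphi)) = 0$. The smoothness assumptions on $V$ (and hence on $T_t$) guarantee that $t \mapsto \det(\nabla T_t)$ and $t \mapsto (\nabla T_t)^{\pm 1}$ are $C^1$ into $C^0(\overline\Omega)$, from which $(t, \b z) \mapsto a_t(\b z, \cdot)$ is $C^1$. Moreover, $t \mapsto (\b w \circ T_t^{-1}) \in \b L^2(\Omega)$ is $C^1$ for each fixed $\b w \in \b H^1_D(\Omega)$ with derivative $-\der \b w \, V(0)$ at $t=0$, and similarly in $\b L^2(\Gamma_g)$ for the trace; this renders $t \mapsto \ell_t$ a $C^1$ map into $(\b H^1_D(\Omega))^*$. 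The partial Fr\'echet derivative $\partial_\b z F(0, \b u)$ is the operator associated with $(\b z, \b w) \mapsto \int_\Omega \C(\varphi) \Epsilon(\b z) : \Epsilon(\b w) \dx$, which is an isomorphism by Korn's inequality, Assumption \ref{a:ElastTen}, and Lax--Milgram. The implicit function theorem therefore produces a unique $C^1$ curve $t \mapsto \tilde{\b u}^t$ solving $F(t, \tilde{\b u}^t) = 0$ near $t=0$, and uniqueness forces $\tilde{\b u}^t = \b u(t) \circ T_t$; in particular the claimed differentiability of $t \mapsto \b u(t) \circ T_t$ holds.

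To identify the equation for $\dot{\b u}[V]$, differentiate $a_t(\b u^t, \b w) = \ell_t(\b w)$ at $t=0$, using the product rule on the left-hand side together with $\partial_t|_{t=0} A_t(\b z) = -\tfrac{1}{2}(\nabla \b z \nabla V(0) + (\nabla \b z \nabla V(0))^T)$ (from $\partial_t|_{t=0}(\nabla T_t)^{-1} = -\nabla V(0)$), $\partial_t|_{t=0} \det(\nabla T_t) = \div V(0)$, and $\partial_t|_{t=0}(\b w \circ T_t^{-1}) = -\der \b w \, V(0)$. Rearranging the three resulting volume contributions of $\partial_t|_{t=0} a_t(\b u, \b w)$ to the right-hand side and combining with the derivatives of $\ell_t$ yields \eqref{e:ElastPhaseDotUEpsilonEquationWeak}; uniqueness of $\dot{\b u}[V]$ follows from coercivity of $a_0$.

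The most delicate step is the $C^1$-regularity of $t \mapsto \ell_t$: since $\b f \in \b L^2(\Omega)$ and $\b g \in \b L^2(\Gamma_g)$ admit no pointwise differentiation, one cannot work directly with $\b f \circ T_t$ and $\b g \circ T_t$. The key trick, already built into our choice of pullback, is to move the $t$-dependence onto the $\b H^1_D$-test function, where $\b w \mapsto \b w \circ T_t^{-1}$ forms a $C^1$ family of bounded operators (the surface term additionally requires that traces are continuous under composition with the smooth diffeomorphism $T_t^{-1}$, which is available since $\Omega$ is Lipschitz and $V$ is tangential on $\partial \Omega$). Once this smoothness is in hand, the rest reduces to standard implicit function theorem bookkeeping and chain-rule computations of the type carried out in \cite{delfour, deflourpaper}.
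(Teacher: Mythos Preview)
Your proof is correct and follows essentially the same strategy as the paper's: pull the state equation back to the reference configuration via $T_t$, define $F(t,\b z)$ as the resulting variational residual, observe that $\der_{\b z}F(0,\b u)$ is the bilinear form $\int_\Omega\C(\varphi)\Epsilon(\b z):\Epsilon(\b w)\dx$, invoke Lax--Milgram/Korn to see it is an isomorphism, and apply the implicit function theorem to obtain differentiability of $t\mapsto\b u(t)\circ T_t$ together with the equation for $\dot{\b u}[V]$. Your additional discussion of why $t\mapsto\ell_t$ is $C^1$ (keeping the $t$-dependence on the test function rather than on $\b f,\b g$) makes explicit a point the paper's proof simply uses without comment.
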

\begin{proof}
The idea is to apply the implicit function theorem and hence we define for $\tau_0>0$ small enough the function $F:(-\tau_0,\tau_0)\times\b H^1_D(\Omega)\to\left(\b H^1_D\left(\Omega\right)\right)'$ by
	\begin{align*}F(t,\b u)(\b v)&=\int_\Omega\C(\varphi)\frac12\left(\nabla T_t^{-1}\nabla\b u+\der\b u\der T_t^{-1}\right):\frac12(\nabla T_t^{-1}\nabla\b v+\der\b v\der T_t^{-1})\det\der T_t\dx-\\
&-\int_\Omega\C\left(\varphi\right)\overline\Epsilon\left(\varphi\right):\frac12\left(\nabla T_t^{-1}\nabla\b v+\der\b v\der T_t^{-1}\right)\det\der T_t\dx-\\
&-\int_\Omega\b f\cdot\left(\b v\circ T_t^{-1}\right)\dx-\int_{\Gamma_g}\b g\cdot\left(\b v\circ T_t^{-1}\right)\ds.\end{align*}
	Using the calculation rules $\nabla\left(\b v\circ T_t\right)=\nabla T_t\left(\nabla\b v\right)\circ T_t$, $\der\left(\b v\circ T_t\right)=\left(\der\b v\right)\circ T_t\der T_t$ for $\b v\in\b H^1(\Omega)$	we can establish

\begin{align*}
	&F(t,\b u(t)\circ T_t)(\b v)=\int_\Omega\C(\varphi(t))\left(\Epsilon(\b u(t))-\overline\Epsilon\left(\varphi(t)\right)\right):\Epsilon(\b v\circ T_t^{-1})\dx-\\
	&-\int_\Omega\b f\cdot\left(\b v\circ T_t^{-1}\right)\dx-\int_{\Gamma_g}\b g\cdot\left(\b v\circ T_t^{-1}\right)\ds=0
\end{align*}
where we made use of $\b v\circ T_t^{-1}\in\b H^1_D(\Omega)$ if $\b v\in\b H^1_D(\Omega)$ by the particular choice of $T\in{\mathcal T}_{ad}$. Besides, $\der_uF\left(0,\b u\right):\b H^1_D(\Omega)\to\left(\b H^1_D(\Omega)\right)'$, given by

$$\der_uF\left(0,\b u\right)\left(\b u\right)\left(\b v\right)=\int_\Omega\C\left(\varphi\right)\Epsilon\left(\b u\right):\Epsilon\left(\b v\right)\dx\quad\forall \b u,\b v\in\b H^1_D(\Omega)$$
is by Lax-Milgram's theorem an isomorphism. And so we can apply the implicit function theorem to obtain differentiability of $(-\tau_0,\tau_0)\ni t\mapsto\left(\b u(t)\circ T_t\right)\in\b H^1_D(\Omega)$ at $t=0$ together with $\dot{\b u}\left[V\right]:=\partial_t|_{t=0}\left(\b u(t)\circ T_t\right)$, $\der_uF(0,\b u)\dot{\b u}[V]=-\partial_tF(0,\b u)$ and obtain therefrom $\eqref{e:ElastPhaseDotUEpsilonEquationWeak}$.
\end{proof}

Now we can directly proof the validity of the optimality system for the sharp interface problem:\\

{\em Proof of Theorem \ref{t:ElastGeomVarSharp}:}
The formula for the first variation of the perimeter functional can for instance be found in \cite[10.2]{giusti}. The volume integrals appearing in the objective functional can be differentiated directly by using change of variables. To handle the boundary integrals, we use the calculation rules derived in \cite[Chapter 9, Section 4.2]{delfour2} to see
\begin{align*}\partial_t|_{t=0}\int_{T_t(\Gamma_g)} h_\Gamma\left(s,\b u_0(t)\right)\ds&= \partial_t|_{t=0}\int_{\Gamma_g} h_\Gamma\left(T_t(s),\b u_0(t)\circ T_t\right)\omega_t\ds
\end{align*}
where $\omega_t=\left|\det\der T_t\der T_t^{-T}\b n\right|$, $\b u_0(t):=\b S(\varphi_0\circ T_t^{-1})$. The derivative of $\omega_t$ with respect to $t$ at can be calculated by $\partial_t|_{t=0}\omega_t=\div V(0)-\b n\cdot\nabla V(0)\b n.$ For more details we refer to \cite{delfour2}. And so we arrive in
\begin{align*}\partial_t|_{t=0}\int_{T_t(\Gamma_g)} h_\Gamma\left(s,\b u_0(t)\right)\ds&= \int_{\Gamma_g} \der h_\Gamma\left(s,\b u_0\right)\left(V(0),\dot{\b u}_0\left[V\right]\right)+\\
&+h_\Gamma\left(s,\b u_0\right)\left(\div V(0)-\b n\cdot\nabla V(0)\b n\right)\ds
\end{align*}
where $\dot{\b u}_0[V]:=\partial_t|_{t=0}\left(\b u_0(t)\circ T_t\right)$ is already determined by Lemma \ref{l:DerOptCondLem}. \\

 The existence of a Lagrange multiplier for the integral constraint follows as in \cite[Lemma 7.5]{hecht}, see also \cite{GarckeHechtStokes}. For the sake of readability, we restate here the main steps of this proof. First we may assume without loss of generality that $\int_\Omega\varphi_0\dx=\beta|\Omega|$, otherwise any transformation $T\in\mathcal T_{ad}$ will yield admissible comparison functions $\varphi_0\circ T_t^{-1}\in\Phi_{ad}^0$ for $|t|\ll1$ and in this case $\lambda_0=0$ is the desired Lagrange multiplier. Considering the case $\int_\Omega\varphi_0\dx<\beta|\Omega|$, we choose some $W\in\mathcal V_{ad}$ with associated transformation $S\in\mathcal T_{ad}$ such that $\int_\Omega\varphi_0\div W(0)\dx=-1$ and define $g:[-t_0,t_0]\times[-s_0,s_0]\to\R$ by $g(t,s):=-\int_\Omega\varphi_0\circ T_t^{-1}\circ S_s^{-1}\dx+\beta|\Omega|$ for $s_0, t_0>0$ small enough. Direct calculation yields $\partial_s|_{s=0}g(0,s)=-\int_\Omega\varphi_0\div W(0)\dx=1\neq 0$. And so we apply the implicit function theorem to obtain $s\in C^1((-\tau_0,\tau_0),\R)$ such that $g(t,s(t))=0$ for $|t|\ll1$, $s'(0)=-\partial_s|_{s=0} g(0,s)^{-1}\partial_t|_{t=0}g(t,0)=-\partial_t|_{t=0}g(t,0)$. Hence $\varphi_0\circ T_t^{-1}\circ S_{s(t)}^{-1}\in\Phi_{ad}^0$ for $|t|\ll1$ and thus $\partial_t|_{t=0}j_0\left(\varphi_0\circ (S_{s(t)}\circ T_t)^{-1}\right)=0$. One can then establish that 
\begin{align*}0&=\partial_t|_{t=0}j_0\left(\varphi_0\circ (S_{s(t)}\circ T_t)^{-1}\right)=\partial_s|_{s=0}j_0\left(\varphi_0\circ S_s^{-1}\right)s'(0)+\partial_t|_{t=0}j_0\left(\varphi_0\circ T_t^{-1}\right)=\\
&=\lambda_0\int_\Omega\varphi_0\div V(0)\dx+\partial_t|_{t=0}j_0\left(\varphi_0\circ T_t^{-1}\right)\end{align*}
where we defined $\lambda_0:=\partial_s|_{s=0}j_0\left(\varphi_0\circ S_s^{-1}\right)$. As we chose $\int_\Omega\varphi_0\div W(0)=-1<0$ we have that $\int_\Omega\varphi_0\circ S_s^{-1}\dx\leq\beta|\Omega|$, hence $\varphi_0\circ S_s^{-1}\in\Phi_{ad}^0$, for $0<s\ll1$. This shows $\lambda_0\geq 0$ and yields in particular that $\lambda_0$ is a Lagrange multiplier.
\qquad$\square$\\

Similarly, we directly establish the corresponding optimality system for the phase field problems:\\

{\em Proof of Theorem \ref{t:PhseOptCond}:}
We follow the lines of the proof of Theorem \ref{t:ElastGeomVarSharp}. The differential of the terms from the Ginzburg-Landau energy can be treated by direct calculation, compare for instance \cite[Lemma 7.5]{hecht}. 
\qquad$\square$

\section{Proof of the convergence results}\label{s:ProofConvRes} In this section we want to prove the convergence results stated in Theorem \ref{t:ElastConvGammaConv} and Theorem \ref{t:ConvOPtSys}. First, we want to give a proof of the $\Gamma$-convergence result of Theorem \ref{t:ElastConvGammaConv}. For this purpose, we start with the following lemma:

\begin{lemma}\label{l:ElastConvGammaConvFEcont}
	Under the assumptions \ref{a:ElastOmega}-\ref{a:ElastObjectiveFctl}, the function
	$$F_E:\left\{\varphi\in L^1(\Omega)\mid\left|\varphi\right|\leq1\text{ a.e. in }\Omega\right\}\ni\varphi\mapsto\int_\Omega h_\Omega\left(x,\b S(\varphi)\right)\dx+\int_{\Gamma_g}h_\Gamma\left(s,\b S\left(\varphi\right)\right)\ds$$
	is continuous in $L^1(\Omega)$. Besides we find, that $\b S:\{\varphi\in L^1(\Omega)\mid|\varphi|\leq 1\text{ a.e.}\}\to\b H^1_D(\Omega)$ is demicontinuous.
\end{lemma}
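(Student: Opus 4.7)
The plan is to exploit the a priori estimate from Lemma~\ref{l:SolOp} together with the Lipschitz dependence of $\mathcal C$ and $\overline{\Epsilon}$ on $\varphi$, and then to conclude via a Urysohn-type argument.

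Let $\varphi_n \to \varphi$ in $L^1(\Omega)$ with $|\varphi_n|\leq 1$ a.e.\ (and hence $|\varphi|\le 1$ a.e.). Set $\b u_n := \b S(\varphi_n)$. By the estimate $\eqref{e:ElastPHaseSTateEquaAprioriestimate}$ the sequence $(\b u_n)_{n\in\N}$ is bounded in $\b H^1_D(\Omega)$. Passing to a subsequence (not relabelled) I get $\b u_n \rightharpoonup \b u^\ast$ in $\b H^1_D(\Omega)$, together with $\b u_n \to \b u^\ast$ strongly in $\b L^2(\Omega)$ by the Rellich-Kondrachov theorem and $\b u_n|_{\Gamma_g} \to \b u^\ast|_{\Gamma_g}$ in $\b L^2(\Gamma_g)$ by compactness of the trace operator on the Lipschitz domain $\Omega$. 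After a further extraction I may assume $\varphi_n \to \varphi$ pointwise a.e.

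Next I pass to the limit in the state equation $\eqref{e:ElastPhaseFirstState}$ for $\varphi_n$. By Assumptions \ref{a:ElastTen} and \ref{a:Eigenstrain} the maps $\varphi\mapsto \C(\varphi)$ and $\varphi\mapsto\overline\Epsilon(\varphi)$ are Lipschitz on $[-1,1]$ and uniformly bounded, so $\C(\varphi_n)\to\C(\varphi)$ and $\overline\Epsilon(\varphi_n)\to\overline\Epsilon(\varphi)$ pointwise a.e.\ with a uniform $L^\infty$ bound; dominated convergence yields strong convergence in every $L^p(\Omega)$ with $p<\infty$. Combined with the weak convergence $\Epsilon(\b u_n)\rightharpoonup \Epsilon(\b u^\ast)$ in $\b L^2(\Omega)$, the product passes to the limit (strong $\times$ weak) in the bilinear form, and I obtain
\begin{equation*}
\int_\Omega \C(\varphi)\bigl(\Epsilon(\b u^\ast)-\overline\Epsilon(\varphi)\bigr):\Epsilon(\b v)\dx = \int_\Omega \b f\cdot\b v\dx + \int_{\Gamma_g}\b g\cdot\b v\ds
\end{equation*}
for every $\b v\in\b H^1_D(\Omega)$. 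By the uniqueness part of Lemma~\ref{l:SolOp}, $\b u^\ast=\b S(\varphi)$. Since the limit is independent of the chosen subsequence, the whole sequence $\b S(\varphi_n)$ converges weakly in $\b H^1_D(\Omega)$ to $\b S(\varphi)$, which is exactly the demicontinuity assertion.

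For the continuity of $F_E$ I use that the strong convergences $\b u_n\to\b u^\ast$ in $\b L^2(\Omega)$ and $\b u_n|_{\Gamma_g}\to\b u^\ast|_{\Gamma_g}$ in $\b L^2(\Gamma_g)$ obtained along the subsequence, combined with the continuity of the Nemytskii operators stated in Remark~\ref{r:ContObjFctlElast}, give
\begin{equation*}
h_\Omega(\cdot,\b u_n) \to h_\Omega(\cdot,\b S(\varphi)) \text{ in } L^1(\Omega),\qquad h_\Gamma(\cdot,\b u_n|_{\Gamma_g}) \to h_\Gamma(\cdot,\b S(\varphi)|_{\Gamma_g}) \text{ in } L^1(\Gamma_g),
\end{equation*}
and hence $F_E(\varphi_n)\to F_E(\varphi)$ along the subsequence. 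A standard Urysohn argument (applied to any initial subsequence of the original one) upgrades this to convergence of the full sequence, proving continuity of $F_E$ in $L^1(\Omega)$.

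The only delicate point is the passage to the limit in the state equation, where one must simultaneously handle the nonlinear dependence on $\varphi_n$ through $\C$ and $\overline\Epsilon$ and the merely weak convergence of $\Epsilon(\b u_n)$; the Lipschitz regularity in Assumptions \ref{a:ElastTen}, \ref{a:Eigenstrain} together with the uniform bound $|\varphi_n|\le 1$ provides exactly the strong $L^p$ convergence needed to combine with the weak $L^2$ limit.
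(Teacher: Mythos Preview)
Your proof is correct and follows essentially the same route as the paper's: uniform $\b H^1$-bound on $\b S(\varphi_n)$, extraction of a weakly convergent subsequence, passage to the limit in the state equation via dominated convergence for the coefficients combined with weak convergence of $\Epsilon(\b u_n)$, identification of the limit by uniqueness, and a Urysohn argument for the full sequence; continuity of $F_E$ then follows from the compact embeddings into $\b L^2(\Omega)$ and $\b L^2(\Gamma_g)$ together with Remark~\ref{r:ContObjFctlElast}. The only cosmetic difference is that you invoke the a~priori estimate~\eqref{e:ElastPHaseSTateEquaAprioriestimate} directly, whereas the paper re-derives the bound by testing the state equation with $\b u_{n_k}$ itself.
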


\begin{proof}
	Let $\left(\varphi_n\right)_{n\in\N}\subset L^1(\Omega)$ be a sequence such that $\left|\varphi_n\right|\leq1$ a.e. in $\Omega$ for every $n\in\N$ and $\lim_{n\to\infty}\left\|\varphi_n-\varphi\right\|_{L^1(\Omega)}=0$. In particular, this gives directly $\left|\varphi\right|\leq1$ a.e. in $\Omega$. Now let $\left(\varphi_{n_k}\right)_{k\in\N}$ be any subsequence of $\left(\varphi_n\right)_{n\in\N}$. Defining $\b u_{n_k}:=\b S(\varphi_{n_k})$ we see that it holds
	$$\int_\Omega\C\left(\varphi_{n_k}\right)\left(\Epsilon\left(\b u_{n_k}\right)-\overline\Epsilon\left(\varphi_{n_k}\right)\right):\Epsilon\left(\b u_{n_k}\right)\dx=\int_\Omega\b f\cdot\b u_{n_k}\dx+\int_{\Gamma_g}\b g\cdot\b u_{n_k}\ds\quad\forall k\in\N.$$
	Thus, by applying the inequalities of Korn, Young and H\"older and the uniform estimate on the elasticity tensor $\C$, see $\eqref{e:UniforEstimateElast}$, we obtain that
	$$\sup_{k\in\N}\left\|\b u_{n_k}\right\|_{\b H^1(\Omega)}<\infty.$$
	And so we find a subsequence $\left(\b u_{n_{k(l)}}\right)_{l\in\N}$ such that $\left(\b u_{n_{k(l)}}\right)_{l\in\N}$ converges weakly in $\b H^1(\Omega)$ to some $\b u\in\b H^1_D(\Omega)$ as $l\to\infty$. Using the uniform boundedness of the tensor-valued function $\C\in C^{1,1}\left(\left[-1,1\right],\R^{d^2\times d^2}\right)$, see Assumption \ref{a:ElastTen}, we obtain for any $\b v\in\b H^1_D(\Omega)$ the uniform estimate
	$$\left|\C\left(\varphi_{n_{k(l)}}(x)\right)\Epsilon\left(\b v\right)(x)\right|\leq C\left|\Epsilon(\b v)(x)\right|\quad\text{ for a.e. }x\in\Omega.$$
	Hence, Lebesgue's convergence theorem implies that $\left(\C\left(\varphi_{n_{k(l)}}\right)\Epsilon\left(\b v\right)\right)_{l\in\N}$ converges strongly in $L^2(\Omega)^{d\times d}$ to $\C\left(\varphi\right)\Epsilon\left(\b v\right)$. Since $\left(\Epsilon\left(\b u_{n_{k(l)}}\right)\right)_{l\in\N}$ converges additionally weakly in $L^2(\Omega)^{d\times d}$, we obtain that
	\begin{align*}
		&\lim_{l\to\infty}\left|\int_\Omega \C\left(\varphi_{n_{k(l)}}\right)\Epsilon\left(\b u_{n_{k(l)}}\right):\Epsilon\left(\b v\right)\dx-\int_\Omega \C\left(\varphi\right)\Epsilon\left(\b u\right):\Epsilon\left(\b v\right)\dx\right|=0.
	\end{align*}
	Similarly, we can deduce from the uniform boundedness of $\overline\Epsilon$ that
	\begin{align*}
		&\lim_{l\to\infty}\left|\int_\Omega \C\left(\varphi_{n_{k(l)}}\right)\overline\Epsilon\left(\varphi_{n_{k(l)}}\right):\Epsilon\left(\b v\right)\dx-\int_\Omega \C\left(\varphi\right)\overline\Epsilon\left(\varphi\right):\Epsilon\left(\b v\right)\dx\right|=0.
	\end{align*}
	This leads to
	$$\int_\Omega\C\left(\varphi\right)\left(\Epsilon\left(\b u\right)-\overline\Epsilon\left(\varphi\right)\right):\Epsilon\left(\b v\right)\dx=\int_\Omega\b f\cdot\b v\dx+\int_{\Gamma_g}\b g\cdot\b v\ds\quad\forall\b v\in\b H^1_D(\Omega)$$	
	which yields $\b u=\b S(\varphi)$. By applying the same arguments as above for any subsequence of $\left(\b S(\varphi_n)\right)_{n\in\N}$, we obtain that every subsequence of $\left(\b S(\varphi_n)\right)_{n\in\N}$ has a subsequence $\left(\b S\left(\varphi_{\hat n(k)}\right)\right)_{k\in\N}$ such that $\left(\b S\left(\varphi_{\hat n(k)}\right)\right)_{k\in\N}$ converges weakly in $\b H^1(\Omega)$ to $\b S(\varphi)=\b u$. This implies then the demicontinuity of $\b S$ as stated in the lemma.\\
	
	We are left with proving the continuity of $F_E$. For this purpose, we take again a sequence $\left(\varphi_k\right)_{k\in\N}\subset L^1(\Omega)$ such that $\left|\varphi_k\right|\leq1$ a.e. in $\Omega$ and $\lim_{k\to\infty}\left\|\varphi_k-\varphi\right\|_{L^1(\Omega)}=0$. We have already established, that this implies the weak convergence of $\left(\b S\left(\varphi_k\right)\right)_{k\in\N}$ to $\b S\left(\varphi\right)$ in $\b H^1(\Omega)$. Using the compact imbeddings $\b H^1(\Omega)\hookrightarrow\b L^2(\Omega)$ and $\b H^{\frac12}\left(\Gamma_g\right)\hookrightarrow\b L^2(\Gamma_g)$ we moreover find, that $\left(\b S\left(\varphi_k\right)\right)_{k\in\N}$ converges strongly in $\b L^2(\Omega)$ and $(\b S(\varphi_k)|_{\Gamma_g})_{k\in\N}$ converges strongly in $\b L^2(\Gamma_g)$. We can now use the continuity of the objective functional stated in Assumption~\ref{a:ElastObjectiveFctl}, see Remark~\ref{r:ContObjFctlElast}, to obtain
	$\lim_{k\to\infty}F_E\left(\varphi_k\right)=F_E\left(\varphi\right)$ and have shown the statement.
	\end{proof}

Using this lemma, we can show Theorem~\ref{t:ElastConvGammaConv} by applying known results concerning $\Gamma$-convergence of the Ginzburg-Landau energy.\\

{\em Proof of Theorem \ref{t:ElastConvGammaConv}:}
	By \cite{modica} we obtain, that the Ginzburg-Landau energy $E_\epsilon:L^1(\Omega)\to\overline\R$, which is given by
	$$E_\epsilon\left(\varphi\right):=\begin{cases}\int_\Omega\frac1\epsilon\psi\left(\varphi\right)+\frac\epsilon2\left|\nabla\varphi\right|^2\dx & \text{if }\varphi\in H^1(\Omega),\\+\infty &\text{otherwise},\end{cases}$$
	$\Gamma$-converges as $\epsilon\searrow0$ in $L^1(\Omega)$ to 
	$$E_0: L^1(\Omega)\ni\varphi\mapsto \begin{cases}c_0P_\Omega\left(\left\{\varphi=1\right\}\right) &\text{if }\varphi\in BV(\Omega,\{\pm1\}),\\+\infty & \text{else}.\end{cases}$$ We rewrite the reduced objective functional in the following form: $j_\epsilon=\gamma E_\epsilon+F_E+I_K$, where $I_K(\varphi):=0$ if $\varphi\in K$ and $I_K(\varphi)+\infty$ if $\varphi\in L^1(\Omega)\setminus K$ with $K:=\{\varphi\in L^1(\Omega)\mid\int_\Omega\varphi\dx\leq\beta\left|\Omega\right|\}.$
	Making use of Lemma \ref{l:ElastConvGammaConvFEcont}, we find that $F_E+I_K$ is a continuous function in $L^1(\Omega)$, and so $j_\epsilon$ is the Ginzburg-Landau energy $E_\epsilon$ plus some functional which is continuous in $L^1(\Omega)$. Consequently, by standard results for $\Gamma$-convergence, see for instance \cite{dalmaso}, we find that $\left(j_\epsilon\right)_{\epsilon>0}$ $\Gamma$-converges in $L^1(\Omega)$ to $j_0$, since $j_0(\varphi)=\gamma E_0\left(\varphi\right)+\left(F_E+I_K\right)\left(\varphi\right).$ This proves the statement.
\qquad$\square$\\

Now we want to prove the convergence of the equations of the first variation:\\

{\em Proof of Theorem~\ref{t:ConvOPtSys}:}
	The result of Corollary \ref{c:ConvMinElastConv} yields directly the existence of a subsequence of $\left(\varphi_\epsilon\right)_{\epsilon>0}$ converging in $L^1(\Omega)$ to a minimizer $\varphi_0$ of $j_0$ such that $\lim_{\epsilon\searrow0}j_\epsilon(\varphi_\epsilon)=j_0(\varphi_0)$. By Lemma \ref{l:ElastConvGammaConvFEcont}, this implies the weak convergence of $\left(\b u_\epsilon\right)_{\epsilon>0}$ to $\b u_0=\b S(\varphi_0)$ in $\b H^1(\Omega)$ as $\epsilon\searrow0$.\\
	Now we recall, that $\dot{\b u}_\epsilon\left[V\right]\in\b H^1_D(\Omega)$ is given as the solution of

\begin{equation}\label{e:ElastConvProofOptSysEquForDotUEpsin}\begin{split}&\int_\Omega\C(\varphi_\epsilon)\Epsilon(\dot{\b u}_\epsilon[V]):\Epsilon(\b v)\dx=\b R_\epsilon(\b v)\quad\forall\b v\in\b H^1_D(\Omega)\end{split}\end{equation}
where $\b R_\epsilon\in\left(\b H^1_D(\Omega)\right)'$ is given by
\begin{align*}&\b R_\epsilon(\b v):=\int_\Omega\C(\varphi_\epsilon)\frac12\left(\der\b u_\epsilon\der V(0)+\nabla V(0)\nabla\b u_\epsilon\right):\Epsilon(\b v)+\\
&+\C(\varphi_\epsilon)\left(\Epsilon(\b u_\epsilon)-\overline\Epsilon\left(\varphi_\epsilon\right)\right):\frac12(\nabla V(0)\nabla\b v+\der\b v\der V(0))-\\
&-\C(\varphi_\epsilon)\left(\Epsilon(\b u_\epsilon)-\overline\Epsilon\left(\varphi_\epsilon\right)\right):\Epsilon(\b v)\div V(0)\dx-\int_\Omega\b f\cdot\der \b vV(0)\dx-\int_{\Gamma_g}\b g\cdot\der\b vV(0)\ds.\end{align*}
Since $\left(\b u_\epsilon\right)_{\epsilon>0}$ is uniformly bounded in $\b H^1(\Omega)$, $\left\|\varphi_\epsilon\right\|_{L^\infty(\Omega)}\leq1$ and using the uniform estimate on the elasticity tensor and the eigenstrain given by Assumptions \ref{a:ElastTen} and \ref{a:Eigenstrain} we can deduce that $\sup_{\epsilon>0}\left\|\b R_\epsilon\right\|_{(\b H^1_D(\Omega))'}<\infty.$ And so we find by Korn's inequality from $\eqref{e:ElastConvProofOptSysEquForDotUEpsin}$ that $\sup_{\epsilon>0}\left\|\dot{\b u}_\epsilon\left[V\right]\right\|_{\b H^1(\Omega)}\leq C.$ This yields the existence of a subsequence, which will be denoted by the same, such that $\left(\dot{\b u}_\epsilon\left[V\right]\right)_{\epsilon>0}$ converges weakly in $\b H^1(\Omega)$ to $\b w\in\b H^1_D(\Omega)$. Following the arguments of the proof of Lemma \ref{l:ElastConvGammaConvFEcont} we see that the limit element $\b w$ of $\left(\dot{\b u}_\epsilon\left[V\right]\right)_{\epsilon>0}$ fulfills $\eqref{e:ElastPhaseDotUEpsilonEquationWeak}$. Hence, by definition of $\dot{\b u}_0\left[V\right]$, see Theorem \ref{t:ElastGeomVarSharp}, we get $\b w=\dot{\b u}_0\left[V\right]$. In particular, we can deduce by the imbedding theorems that both $\left(\b u_\epsilon\right)_{\epsilon>0}$ and $\left(\dot{\b u}_\epsilon\left[V\right]\right)_{\epsilon>0}$ converge strongly in $\b L^2(\Omega)$ and $\b L^2(\Gamma_g)$. And so we obtain by the continuous differentiability of the objective functional, see Remark~\ref{r:ElastObjFctFrechet}, that
 
 \begin{equation}\begin{split}
&\lim_{\epsilon\searrow0}\left[\int_\Omega\left[\der h_\Omega\left(x,\b u_\epsilon\right)\left(V(0),\dot{\b u}_\epsilon\left[V\right]\right)+h_\Omega\left(x,\b u_\epsilon\right)\div V(0)\right]\dx+\right.\\
	&\left.+\int_{\Gamma_g}\left[\der h_\Gamma\left(s,\b u_\epsilon\right)\left(V(0),\dot{\b u}_\epsilon\left[V\right]\right)+h_\Gamma\left(s,\b u_\epsilon\right)\left(\div V(0)-\b n\cdot\nabla V(0)\b n\right)\right]\ds\right]=\\
	&=\int_\Omega\left[\der h_\Omega\left(x,\b u_0\right)\left(V(0),\dot{\b u}_0\left[V\right]\right)+h_\Omega\left(x,\b u_0\right)\div V(0)\right]\dx+\\
	&+\int_{\Gamma_g}\left[\der h_\Gamma\left(s,\b u_0\right)\left(V(0),\dot{\b u}_0\left[V\right]\right)+h_\Gamma\left(s,\b u_0\right)\left(\div V(0)-\b n\cdot\nabla V(0)\b n\right)\right]\ds.
	\end{split}\end{equation}
 
 Analogously as in \cite{GarckeHechtStokes} we can apply the Reshetnyak continuity theorem to deduce 
 
 \begin{equation}\begin{split}
\lim_{\epsilon\searrow0}&\left[\int_\Omega\left(\frac{\gamma\epsilon}{2}\left|\nabla\varphi_\epsilon\right|^2+\frac\gamma\epsilon\psi\left(\varphi_\epsilon\right)\right)\div V(0)-\gamma\epsilon\nabla\varphi_\epsilon\cdot\nabla V(0)\nabla\varphi_\epsilon\dx\right]=\\
&=\gamma c_0\int_\Omega\left(\div V(0)-\nu\cdot\nabla V(0)\nu\right)\,\mathrm d\left|\der\chi_{E_0}\right|.
	\end{split}\end{equation}
	Plugging those results together we end up with $\lim_{\epsilon\searrow0}\partial_t|_{t=0} j_\epsilon\left(\varphi_\epsilon\circ T_t^{-1}\right)=\partial_t|_{t=0}j_0\left(\varphi_0\circ T_t^{-1}\right)$. As in \cite{GarckeHechtStokes} we can find some $V\in{\mathcal V}_{ad}$ such that $\int_\Omega\varphi_0\div V(0)\dx>0$ if we assume $|\{\varphi_0=1\}|>0$. Thus we have
	\begin{align*}\lim_{\epsilon\searrow0}-\lambda_\epsilon\int_\Omega\varphi_\epsilon\div V(0)\dx=\lim_{\epsilon\searrow0}\partial_t|_{t=0}j_\epsilon\left(\varphi_\epsilon\circ T_t^{-1}\right)=\partial_t|_{t=0}j_0\left(\varphi_0\circ T_t^{-1}\right)\end{align*}
	wherefrom we obtain that $\left(\lambda_\epsilon\right)_{\epsilon>0}$ converges to some $\lambda_0\geq0$. Besides, this directly yields that $\lambda_0\geq0$ fulfills $\eqref{e:ElastSharpOptCond1}$	and thus is a Lagrange multiplier associated to the integral constraint. This finally proves the statement. \qquad$\square$

\section{Conclusions}
We have shown that the proposed phase field approach leads to an optimal control problem for which existence of a solution can be shown. The problem can be reformulated in such a way that a reduced objective functional has to be minimized. The latter $\Gamma$-converges in $L^1(\Omega)$ as the thickness of the interface tends to zero to a functional describing a sharp interface formulation of the problem. We have shown that certain first order optimality conditions for the phase field problem can be deduced by geometric variations. As the minimizers converge, also the obtained optimality conditions converge to a system, which is a necessary optimality condition for the sharp interface problem. Besides, this optimality system for the sharp interface problem can be derived in the general setting of functions of bounded variations.\\
Assuming additional regularity assumptions on the minimizing set and the data, it can be shown that the obtained conditions are equivalent to results that were already obtained in literature by classical shape calculus and also by formal asymptotics from the phase field model. Thus we have delivered a rigorous proof for the convergence results that were already predicted by formal asymptotics in \cite{relatingphasefield}. Moreover we use a general objective functional. However, in \cite{relatingphasefield} the state constraints can be $\epsilon$-dependent. To be precise, an ersatz material approach is used, where the stiffness of the ersatz material scales like $\epsilon^2$, and thus vanishes as $\epsilon\searrow0$. This is not done in our work, but possible generalizations for reasonable objective functionals in the spirit of \cite{HechtStokesEnergy, GarckeHechtStokes} may be possible. This means that convergence of minimizers could possibly be shown, but we expect that again certain growth conditions on the convergence of the minimizers play a role, where this rate has to be consistent with the $\epsilon$-scaling of the ersatz material.\\
We presented numerical simulations which were obtained with the help of a projected gradient type method which showed that the proposed phase field approach works well in two and three spatial dimensions.

\bibliographystyle{siam}
\bibliography{literature}

\begin{thebibliography}{10}

\bibitem{allairemulti}
{\sc G.~Allaire, C.~Dapogny, G.~Delgado, and G.~Michailidis}, {\em {Multi-phase
  structural optimization via a level set method}}, COCV, 20 (2014),
  pp.~576--611.

\bibitem{allaire_jouve}
{\sc G.~Allaire and F.~Jouve}, {\em {A level-set method for vibration and
  multiple loads structural optimization}}, Comput. Methods Appl. Mech. Engrg.,
  194 (2005), pp.~3269--3290.

\bibitem{Allaire2004363}
{\sc G.~Allaire, F.~Jouve, and A.-M. Toader}, {\em Structural optimization
  using sensitivity analysis and a level-set method}, Journal of Computational
  Physics, 194 (2004), pp.~363 -- 393.

\bibitem{allaire2010damage}
{\sc G.~Allaire, F.~Jouve, and N.~Van~Goethem}, {\em Damage evolution in
  brittle materials by shape and topological sensitivity analysis}, J. Comput.
  Phys., 230 (2011), pp.~5010--5044.

\bibitem{ambrosioButtazzo}
{\sc L.~Ambrosio and G.~Buttazzo}, {\em {An optimal design problem with
  perimeter penalization}}, Calc. Var. Partial Differential Equations, 1
  (1993), pp.~55--69.

\bibitem{ambrosio}
{\sc L.~Ambrosio, N.~Fusco, and D.~Pallara}, {\em {Functions of Bounded
  Variation and Free Discontinuity Problems}}, Oxford: Clarendon Press, 2000.

\bibitem{BNS04}
{\sc J.~Barrett, R.~N\"urnberg, and V.~Styles}, {\em {Finite Element
  Approximation of a Phase Field Model for Void Electromigration}}, SIAM J.
  Numer. Anal., 42 (2004), pp.~738--772.

\bibitem{bendsoe2003topology}
{\sc M.~Bends{\o}e}, {\em {Topology optimization: theory, methods and
  applications}}, Springer, 2003.

\bibitem{haberjog}
{\sc M.~Bends{\o}e, R.~Haber, and C.~Jog}, {\em {A new approach to
  variable-topology shape design using a constraint on perimeter}}, Struct.
  Multidiscip. Optim., 11 (1996), pp.~1--12.

\bibitem{ItoKunischPDAS}
{\sc M.~Bergounioux, K.~Ito, and K.~Kunisch}, {\em Primal-dual strategy for
  constrained optimal control problems}, SIAM Journal on Control and
  Optimization, 37 (1999), pp.~1176--1194.

\bibitem{relatingphasefield}
{\sc L.~Blank, H.~Farshbaf-Shaker, H.~Garcke, and V.~Styles}, {\em Relating
  phase field and sharp interface approaches to structural topology
  optimization}, ESAIM: COCV, 20 (2014), pp.~1024--1058.

\bibitem{Buchkapitel}
{\sc L.~Blank, M.~Farshbaf-Shaker, H.~Garcke, C.~Rupprecht, and V.~Styles},
  {\em {Multi-material phase field approach to structural topology
  optimization}}, in Trends in PDE Constrained Optimization, G.~Leugering,
  P.~Benner, S.~Engell, A.~Griewank, H.~Harbrecht, M.~Hinze, R.~Rannacher, and
  S.~Ulbrich, eds., ISNM, Birkh\"auser, to appear.

\bibitem{bgsssv2010}
{\sc L.~Blank, H.~Garcke, L.~Sarbu, T.~Srisupattarawanit, V.~Styles, and
  A.~Voigt}, {\em Phase-field approaches to structural topology optimization},
  in Constrained Optimization and Optimal Control for Partial Differential
  Equations, G.~Leugering, S.~Engell, M.~Hinze, R.~Rannacher, V.~Schulz,
  V.~Ulbrich, and S.~Ulbrich, eds., vol.~160 of ISNM, Birkh\"auser, 2012,
  pp.~245--256.

\bibitem{RupprechtBlank}
{\sc L.~Blank and C.~Rupprecht}, {\em {An extension of the projected gradient
  method to a Banach space setting with application in structural topology
  optimization}}.
\newblock arXiv:1503.03783v2.

\bibitem{blowey_elliot}
{\sc J.~F. Blowey and C.~M. Elliott}, {\em {The Cahn-Hilliard gradient theory
  for phase separation with non-smooth free energy Part I: Mathematical
  analysis}}, European J. Appl. Math., 2 (1991), pp.~233--280.

\bibitem{bourdin_chambolle}
{\sc B.~Bourdin and A.~Chambolle}, {\em {Design-dependent loads in topology
  optimization}}, ESAIM Control Optim. Calc. Var., 9 (2003), pp.~19--48.

\bibitem{braess}
{\sc D.~Braess}, {\em {Finite Elemente}}, Springer, 1997.

\bibitem{ciarlet}
{\sc P.~Ciarlet}, {\em Three-Dimensional Elasticity}, vol.~1 of Studies in
  mathematics and its applications, Elsevier Science, 1988.

\bibitem{dalmaso}
{\sc G.~Dal~Maso}, {\em {An Introduction to $\Gamma$-convergence}}, Progress in
  Nonlinear Differential Equations and Their Applications, Birkh{\"a}user,
  1993.

\bibitem{delfour}
{\sc M.~Delfour and J.~Zol{\'e}sio}, {\em {Shapes and Geometries: Analysis,
  Differential Calculus, and Optimization}}, Adv. Des. Control, SIAM, 2001.

\bibitem{delfour2}
\leavevmode\vrule height 2pt depth -1.6pt width 23pt, {\em {Shapes and
  Geometries: Metrics, Analysis, Differential Calculus and Optimization}}, Adv.
  Des. Control, SIAM, 2011.

\bibitem{deflourpaper}
{\sc M.~Delfour and J.~Zolésio}, {\em Shape derivatives for nonsmooth domains},
  in Optimal Control of Partial Differential Equations, K.-H. Hoffmann and
  W.~Krabs, eds., vol.~149 of Lecture Notes in Control and Inform. Sci.,
  Springer, 1991, pp.~38--55.

\bibitem{dorand1922influence}
{\sc C.~Dorand}, {\em {Influence of Elliptical Distribution of Lift on Strength
  of Airplane Wings}}, Technical memorandum, National Advisory Committee for
  Aeronautics, 1922.

\bibitem{eckgarcke}
{\sc C.~Eck, H.~Garcke, and P.~Knabner}, {\em {Mathematische Modellierung}},
  Springer, 2008.

\bibitem{evans_gariepy}
{\sc L.~Evans and R.~Gariepy}, {\em {Measure Theory and Fine Properties of
  Functions}}, Mathematical Chemistry Series, CRC PressINC, 1992.

\bibitem{garcke_paper}
{\sc H.~Garcke}, {\em {The $\Gamma$-limit of the Ginzburg-Landau energy in an
  elastic medium}}, AMSA, 18 (2008), pp.~345--379.

\bibitem{HechtStokesEnergy}
{\sc H.~Garcke and C.~Hecht}, {\em {A phase field approach for shape and
  topology optimization in Stokes flow}}, Preprint-Nr.: 09/2014, Universität
  Regensburg, Mathematik,  (2014).

\bibitem{GarckeHechtStokes}
\leavevmode\vrule height 2pt depth -1.6pt width 23pt, {\em {Shape and topology
  optimization in Stokes flow with a phase field approach}}, Preprint-Nr.:
  10/2014, Universität Regensburg, Mathematik,  (2014).

\bibitem{giusti}
{\sc E.~Giusti}, {\em {Minimal surfaces and functions of bounded variation}},
  Notes on pure mathematics, Dept. of Pure Mathematics, 1977.

\bibitem{hecht}
{\sc C.~Hecht}, {\em {Shape and topology optimization in fluids using a phase
  field approach and an application in structural optimization}},
  {Dissertation}, University of Regensburg, 2014.

\bibitem{sturm}
{\sc M.~Hintermüller, D.~Hömberg, and K.~Sturm}, {\em {Shape optimization for a
  sharp interface model of distortion compensation}}.
\newblock WIAS Preprint No. 1792, 2013.

\bibitem{michell}
{\sc A.~G.~M. Michell}, {\em The limits of economy of material in
  frame-structures}, Phil. Mag., 8 (1904), pp.~589--597.

\bibitem{modica}
{\sc L.~Modica}, {\em {The gradient theory of phase transitions and the minimal
  interface criterion}}, Arch. Ration. Mech. Anal., 98 (1987), pp.~123--142.

\bibitem{modica_mortola}
{\sc L.~Modica and S.~Mortola}, {\em {Un esempio di $\Gamma$-convergenza}},
  Boll. Un. Mat. Ital. B (5), 14 (1977), pp.~285--299.

\bibitem{sale2013structural}
{\sc D.~Sale, A.~Aliseda, M.~Motley, and Y.~Li}, {\em {Structural Optimization
  of Composite Blades for Wind and Hydrokinetic Turbines}}, Proceedings of the
  1st Marine Energy Technology Symposium METS 2013,  (2013).

\bibitem{showalter}
{\sc R.~Showalter}, {\em {Monotone Operators in Banach Spaces and Nonlinear
  Partial Differential Equations}}, Mathematical surveys and monographs, v. 49,
  American Mathematical Society, 1997.

\bibitem{thomson}
{\sc J.~Thomsen}, {\em Topology optimization of structures composed of one or
  two materials}, Structural optimization, 5 (1992), pp.~108--115.

\end{thebibliography}

\end{document}